\theoremstyle{plain}
\newtheorem{theorem}{Theorem}[section]
\newtheorem{proposition}[theorem]{Proposition}
\newtheorem{lemma}[theorem]{Lemma}
\newtheorem{corollary}[theorem]{Corollary}
\theoremstyle{definition}
\newtheorem{definition}[theorem]{Definition}
\newtheorem{remark}[theorem]{Remark}
\newtheorem{example}[theorem]{Example}
\newtheorem{assumption}[theorem]{Assumption}
\theoremstyle{remark}
\newcommand\TV{{\rm TV}} 
\newcommand\W{\mathcal{W}} 
\newcommand{\R}{\mathbb{R}}
\newcommand{\X}{\mathcal{X}}
\newcommand\avsuminner[2]{%
	{\sbox0{$\m@th#1\sum$}%
		\vphantom{\usebox0}%
		\ooalign{%
			\hidewidth
			\smash{\vrule height\dimexpr\ht0+1pt\relax depth\dimexpr\dp0+1pt\relax}%
			\hidewidth\cr
			$\m@th#1\sum$\cr
		}%
	}%
}
\newcommand{\1}{\mathbf{1}}
\newcommand{\mykill}[1]{}
\newcommand{\brak}[1]{\left(#1\right)}    %
\newcommand{\eins}{\1}%
\newcommand{\pa}{\text{\rm pa}}
\newcommand{\anc}{\text{\rm pre}}
\newcommand{\E}[1]{{\mathbb E}\left[#1\right]}
\newcommand{\abs}[1]{\left|#1\right|}     %
\numberwithin{equation}{section}
\def\XXint#1#2#3{{\setbox0=\hbox{$#1{#2#3}{\int}$ }
		\vcenter{\hbox{$#2#3$ }}\kern-.6\wd0}}
\begin{document}
	\title{\vspace{-2em}
        Fast Wasserstein rates for estimating probability distributions of probabilistic graphical models
        
	}
	\date{\today}

	\author{
		Daniel Bartl%
		\thanks{Department of Mathematics \& Departments of Statistics and Data Science, National University of Singapore, bartld@nus.edu.sg.}
			\and
		Stephan Eckstein%
		\thanks{Department of Mathematics, University of T\"{u}bingen, Germany, stephan.eckstein@uni-tuebingen.de.}
	}
	\maketitle \vspace{-1.2em}
	
	\begin{abstract}
		 Using i.i.d.~data to estimate a high-dimensional distribution in Wasserstein distance is a fundamental instance of the curse of dimensionality. We explore how structural knowledge about the data-generating process which gives rise to the distribution can be used to overcome this curse. More precisely, we work with the set of distributions of probabilistic graphical models for a given directed acyclic graph. It turns out that this knowledge is only helpful if it can be quantified, which we formalize via smoothness conditions on the transition kernels in the disintegration corresponding to the graph. In this case, we prove that the rate of estimation is governed by the local structure of the graph, more precisely by dimensions corresponding to single nodes together with their parent nodes. The precise rate depends on the exact notion of smoothness assumed for the kernels, where either weak (Wasserstein-Lipschitz) or strong (bidirectional Total-Variation-Lipschitz) conditions lead to different results. We prove sharpness under the strong condition and show that this condition covers, as a special case, distributions having a positive Lipschitz density.
	\end{abstract}
	
	\vspace{.3em}
	
	{\small
		\noindent \emph{Keywords:} probabilistic graphical models, nonparametric estimation, Wasserstein distance.

		\noindent \emph{AMS 2010 Subject Classification:}
        62A09; 
		62G05; 
        68T30; 
        62G30 
	}
	\vspace{.6em}

	\section{Introduction}\label{sec:intro}
	Overcoming the curse of dimensionality in high-dimensional learning settings usually requires inductive biases, i.e., some a priori assumptions on the kind of structures one tries to learn. One of the basic learning settings of this kind is non-parametric estimation of probability measures, which aims at learning the distribution of high-dimensional random variables without parametric assumptions (see, e.g., \cite{dudley1969speed,fournier2015rate,tsybakov}). Most approaches towards overcoming the curse of dimensionality in this setting have focused on imposing biases towards smoothness, often explicitly by working with distributions having smooth Lebesgue densities (see, e.g., \cite{niles2022minimax, tsybakov}) or also implicitly through the kind of distance used to measure the difference of the estimate from the truth (see, e.g., \cite{kloeckner2020empirical,singh2018nonparametric}). In this paper, we provide complementary results by focusing on biases related to the relational structure between the different variables of the distributions (cf.~\cite{battaglia2018relational}). More precisely, we focus on distributions of probabilistic graphical models (see, e.g., \cite{koller2009probabilistic,peters2017elements}) corresponding to a given graph such that the kernels occurring in the disintegration according to the graph are continuous in a suitable sense. With this setting, we aim to accomplish two things: First, establish conditions for large random systems which guarantee that the rate of estimation only depends on local parts of the system. And second, introduce smoothness criteria based on stochastic kernels instead of Lebesgue densities to cover settings with partly discrete variables as well.
	
	\subsection{Setting and summary of the main results}
	\subsubsection{General learning setting}

    Let $\X=[0,1]^d$, denote by $\mathcal{P}(\X)$ the set of probability measures on $\X$ and set $\W$  to be the first order Wasserstein distance on $\mathcal{P}(\X)$, defined by
    \[ \W(\mu,\nu) = \inf_\pi \int_{\X\times\X} \|x-y\| \,\pi(dx,dy), \]
    where the infimum is taken over all couplings $\pi$, i.e.\ measures $\pi$ with first marginal $\mu$ and second marginal $\nu$. Throughout, we use $\|\cdot\| = \|\cdot\|_\infty$, which is of course only relevant up to constants.
    We refer e.g.\ to \cite{graf2000foundations,villani2008optimal} for background on Wasserstein distances.

    We are interested in estimating a probability measure $\mu \in \mathcal{P}(\X)$ via $n$ i.i.d.~samples  $X^1,\dots,X^n$ selected according to $\mu$, that is, find an estimator $E_n : \X^n \rightarrow \mathcal{P}(\X)$ such that 
    \[\int \W(\mu, E_n) \,d\mu^{\otimes n} = \int \W(\mu, E_n(x^1, \dots, x^n)) \,\mu(dx^1) \ldots \mu(dx^n)
	\]
	is small simultaneously for many different distributions $\mu$ in a set $\mathcal{Q} \subseteq \mathcal{P}(\X)$. 
    Hence, we wish to solve
	\begin{equation}\label{eq:basiclearninggoal}
		V_Q(n) := \inf_{E_n} \sup_{\mu \in \mathcal{Q}} \int \W(\mu, E_n) \,d\mu^{\otimes n}.
	\end{equation}
	In the case where one does not impose any additional prior knowledge and thus works with $\mathcal{Q} = \mathcal{P}(\X)$ for $d\geq 3$, it is well known that $V_Q(n) \lesssim n^{-1/d}$ (see \cite{dudley1969speed} and also \cite{fournier2015rate}).
    Notably, these rates are attained by the empirical measure $E_n(x^1, \dots, x^n) = \frac{1}{n} \sum_{i=1}^n \delta_{x^i}$. 
    In this case, no estimator can do better, and so $V_Q(n) \gtrsim n^{-1/d}$ holds as well (see, e.g., \cite{chewi2024statistical}). 
    With additional structural assumptions, that is, when $\mathcal{Q} \subsetneq \mathcal{P}(\X)$, the empirical measure is usually suboptimal and other estimators must be used to obtain faster rates (cf.~\cite{niles2022minimax, tsybakov}).

	\subsubsection{Probabilistic graphical models}
	Throughout this paper, we always assume that a directed acyclic graph (DAG) $G$ with nodes $\{1, \dots, K\}$ is given (and for most of this work, known to the statistician) and is topologically sorted, which means that there are no edges from $i$ to $j$ for $j < i$. 
    The space $\X=[0,1]^d$ is partitioned into $\X=\X_1\times \dots\times \X_K$ where $\X_k=[0,1]^{d_k}$ and $\sum_{k=1}^K d_k=d$.
    For $x \in \X$, denote by $x_k \in \X_k$ the projection onto the $k$-th coordinate, and by $x_I$ the projection onto a subset of variables $I \subseteq \{1, \dots, K\}$. Further, denote by $\pa(k)$ the set of parent nodes of a node $k$. 
    Probabilistic graphical models for the graph $G$ are defined as
	\[
	\mathcal{P}_G := \left\{\mu \in \mathcal{P}(\X) \mid \mu(dx_1, \dots, dx_K) = \prod_{k=1}^K \mu(dx_k \mid x_{\pa(k)})\right\}.
	\]
	That is, when integrating the $k$-th variable in the disintegration of $\mu \in \mathcal{P}_G$, one only needs to condition on the parent variables of $k$ according to $G$. One simple example of a relevant graph is $1\rightarrow 2 \rightarrow \ldots \rightarrow K$, in which case $\mathcal{P}_G$ corresponds to distributions of Markov chains. 
  A slightly richer example is the abstract rooted tree depicted in
Figure~\ref{fig:rooted-tree}.  Such graphs can be viewed as simplified
tree-shaped dependence models for fully observed variables, as in
Chow--Liu-type approximations of multivariate distributions
\cite{chow1968}; related Bayesian-network models also appear e.g.\ in clinical
screening and diagnosis applications
\cite{friedman1997,ferreira2019,JensenNielsen2007}.
\begin{figure}[h]
\centering
\begin{tikzpicture}[
    >=stealth,
    var/.style={circle, draw, minimum size=7mm, inner sep=0pt}
]
    \node[var] (x1) at (0,0) {$X_1$};

    \node[var] (x2) at (-2,-1.2) {$X_2$};
    \node[var] (x3) at ( 2,-1.2) {$X_3$};

    \node[var] (x4) at (-3,-2.5) {$X_4$};
    \node[var] (x5) at (-1,-2.5) {$X_5$};
    \node[var] (x6) at ( 1,-2.5) {$X_6$};
    \node[var] (x7) at ( 3,-2.5) {$X_7$};

    \draw[->] (x1) -- (x2);
    \draw[->] (x1) -- (x3);
    \draw[->] (x2) -- (x4);
    \draw[->] (x2) -- (x5);
    \draw[->] (x3) -- (x6);
    \draw[->] (x3) -- (x7);
\end{tikzpicture}
\caption{A rooted tree DAG, motivated by tree-shaped dependence models for
fully observed variables.}
\label{fig:rooted-tree}
\end{figure}
Whenever $\pa(k)$ is empty, the conditional distribution is just understood as the marginal distribution. For instance since $\pa(1)=\emptyset$, $\mu(dx_1\mid x_{\pa(1)})$ is just the first marginal of $\mu$.
    We mention that $\mu \in \mathcal{P}_G(\X)$ can analogously be defined using conditional independences (see, e.g., \cite[Remark 3.2]{cheridito2025optimal}). 

    Probabilistic graphical models are also known as Bayesian networks and naturally related to Bayesian inference (cf.~\cite{heckerman1998tutorial}). They are more generally used to combine structural assumptions about data generating processes with probabilistic modelling tools. 
    This is important for expressing and inferring causal probabilistic relations, for instance in fields such as environmental modelling (cf.~\cite{marcot2019advances}), biology (cf.~\cite{laubach2021biologist}), or climate research (cf.~\cite{ebert2012causal}). In this regard, statistically optimal methods for causality inspired tools are considered a timely challenge (see, e.g., \cite[Section 3.5]{cinelli2025challenges}).
     Probabilistic graphical models are further used to bridge the gap between causality and machine learning (cf.~\cite{scholkopf2022causality}) and thus naturally occur in the study of modern machine learning architectures (see, e.g., \cite{nichani2024transformers,zevcevic2021relating}).
    We refer to \cite{koller2009probabilistic,pearl2009causality,peters2017elements} for more background on probabilistic graphical models.

	\subsubsection{Lower bounds without continuous kernels}
	The first natural idea is to explore the learning problem \eqref{eq:basiclearninggoal} with $Q = \mathcal{P}_G$. We find that this is, however, not a fruitful approach. Aside from trivial cases in which the graph $G$ has disconnected components and thus one can estimate those components separately, it is not obvious at all how the prior knowledge of $\mu \in \mathcal{P}_G$ is beneficial compared to $\mu \in \mathcal{P}(\X)$. To explain these difficulties, it might be helpful to think of $\mathcal{P}_G$ as an analogue of the set of all distributions having a Lebesgue density, but without any quantitative control on the smoothness of this density. With this point of view, it is natural that the prior knowledge of $\mu \in \mathcal{P}_G$ is statistically not helpful, similarly to how knowledge of the existence of a Lebesgue density alone is not helpful.
	
	And indeed, we establish in Section \ref{sec:lowerbounds} that for many graph structures, the set $\mathcal{P}_G$ is dense in $\mathcal{P}(\X)$ with respect to the weak convergence, and the learning problem \eqref{eq:basiclearninggoal} using $Q = \mathcal{P}_G$ still has  the rate $n^{-1/d}$. This involves all graphs which have only one root node, as for instance the Markovian graph $1 \rightarrow 2 \rightarrow \ldots \rightarrow K$, or any kind of tree (as e.g.\ in Figure \ref{fig:rooted-tree}), see Theorem \ref{thm:lowercond} and Corollary \ref{cor:graphcondind}. The reason is that those graph structures do not impose any kind of unconditional, but only conditional independences, which we show in Theorem \ref{thm:lowercond} to be, as a purely qualitative assumption, statistically useless. To complement this result, we also explore another graph structure which can be regarded as an extreme case in terms of imposing several unconditional independences, namely the graph only including the nodes $k \rightarrow K$ for $k \in \{1, \dots, K-1\}$. In this graph all nodes $\{1, \dots, K-1\}$ are independent, and we establish in Proposition \ref{lem:particulargraphlower} that the rate is again $n^{-1/d}$ in this case. 
	
	While we do not establish the lower bound $n^{-1/d}$ for all graphs having only one connected component, we believe the covered cases provide evidence that, for statistical purposes, one should \emph{quantify the compatibility} of a probability measure $\mu$ with a graph $G$ instead of merely working with $\mathcal{P}_G$.

	\subsubsection{Fast (and sharp) rates under continuous kernels}
	To quantify how well a probability measure is compatible with a graph $G$, we introduce Lipschitz continuity conditions on the stochastic kernels occurring in the definition of $\mathcal{P}_G$. 
    More precisely, we shall consider two different conditions, one where Lipschitz continuity of the kernels is formulated via the Wasserstein distance, and one via the total variation distance.
    
    In both cases, the construction for the estimators we use requires certain conditions on the graphical structures. To state this assumption, recall that a subset $J$ of the nodes of the graph $G$ is called fully connected if there is an edge $k \rightarrow \ell$ for all $k, \ell \in J$ with $k<\ell$.
    \begin{assumption}
    \label{ass:graph_struc}
        The graph $G$ contains no colliders, that is, for any $k \in \{1, \dots, K\}$, the set $\pa(k)$ is fully connected.
    \end{assumption}

    The rooted tree in Figure~\ref{fig:rooted-tree} satisfies
Assumption~\ref{ass:graph_struc}: every non-root vertex has exactly one
parent, and hence each parent set is trivially fully connected.  Thus Assumption \ref{ass:graph_struc} allows branching, but excludes unshielded colliders in which two
unrelated parents point into the same child.  
 If a DAG contains such a collider, one can add edges to the graph to make the parents connected (say, in Figure \ref{fig:rooted-tree}, if $X_6$ had parents $X_3$ and $X_2$, we could add an edge from $X_2$ to $X_3$ to make the graph compatible with Assumption \ref{ass:graph_struc}). 
This operation preserves compatibility of the underlying measure with the graphical structure. 
However, we will see below that more edges translate to a possibly worse rate of convergence of our statistical estimators, which is of course undesirable. In other words, Assumption \ref{ass:graph_struc} can be circumvented, albeit at the cost of a possibly worse rate of convergence. We explore this in more detail in Appendix \ref{app:graphass}.

    \vspace{0.5em}
    \noindent
    {\bf Kernels which are Wasserstein-Lipschitz.}
    The kernels corresponding to the disintegration of the graph are given by the maps 
	\[
	\X_{\pa(k)} \ni x_{\pa(k)} \mapsto \mu(dx_k \mid x_{\pa(k)}) \in \mathcal{P}(\X_k).
	\]
	The most natural approach to impose continuity for these maps is to use the Wasserstein distance on $\mathcal{P}(\X_k)$, leading to the following assumption.

    \begin{assumption}[with constant $L$]
    \label{ass:W.Lip}
      The measure $\mu\in\mathcal{P}_G(\X)$ is such that
        \begin{align*}
    			\W(\mu(dx_k \mid x_{\pa(k)}), \mu(dx_k \mid \tilde{x}_{\pa(k)})) 
                &\leq L \|x_{\pa(k)} - \tilde{x}_{\pa(k)}\|,
        \end{align*}
        for all $2\leq k \leq K$ and for all $x_{\pa(k)},\tilde{x}_{\pa(k)}\in\X_{\pa(k)}$.
    \end{assumption}

  This assumption is quite natural; we provide a simple example illustrating its scope in Section \ref{sec:W.vs.TV}.    
  To formulate the main result in the setting of Wasserstein-Lipschitz kernels, set  $d_{\pa(k)} := \sum_{\ell \in \pa(k)} d_\ell$ and define the local dimension $d_{\rm loc}$ as
    \begin{align}
    \label{eq:def.d.loc}
    d_{\rm loc}: = \max_{k=1, \dots, K} \left( \max\{2,d_k\} + d_{\pa(k)} \right).
    \end{align}

    The following showcases that for graphical models with Lipschitz kernels, the overall rate of estimation no longer depends on the overall dimension $d$, but on the local dimension $d_{\rm loc}$ instead.
    For instance, in the context of the example in Figure \ref{fig:rooted-tree} with $d_k=2$ for all $k$ where $d=14$ and $d_{\rm loc}=4$, this means that the rate improves from $n^{-1/14}$ to $n^{-1/4}$ (up to a logarithmic factor).

    \begin{theorem}
    \label{thm:main.W.informal}
        Assume $G$ satisfies Assumption \ref{ass:graph_struc}, fix $L > 0$ and denote by $\mathcal{Q}$ the set of measures $\mu \in \mathcal{P}_G$ which satisfy Assumption \ref{ass:W.Lip} with constant $L$. Then, there exists a constant $C$ depending only on $G$, $L$ and $d_{\rm loc}$ such that
        \[
    	   V_{\mathcal{Q}}(n) \leq C \, \max\{\log(n),1\} \, n^{-1/d_{\rm loc}}.
        \]
    \begin{proof}
        The result follows from Theorem \ref{thm:estWLip}.
    \end{proof}
    \end{theorem}
    
    Three comments are in order: First, the estimator used to obtain the given upper bound is simple and tractable,  see Definition \ref{def:mu.A}.
    Most importantly, it is still a discrete distribution and the computation involves no optimization, merely recombining samples in a suitable way, see Algorithm \ref{alg:hatmuA}. Second, the $\log(n)$-factor is actually only necessary if $d_{\rm loc}$ is attained at $d_k=2$, and the constant $C$ is explicitly tractable (arising mainly from Lemma \ref{lem:Wdecomp}).
    Finally, although the estimator in Theorem \ref{thm:main.W.informal} requires prior knowledge of the graph, this assumption can be removed. More precisely, using a cross-validation-type procedure between the estimators constructed in Theorem \ref{thm:main.TV.informal} for different graphs, one can construct a graph-adaptive estimator which achieves the same error rate even when the underlying graph \(G\) is unknown; see Theorem \ref{thm:estWLip.adaptoive}. This, however, comes at the cost of reduced computational tractability.

    While Theorem \ref{thm:main.W.informal} gives a simple way to exploit the graphical structure and leads to rates depending only on the local dimension, it is open whether the given continuity condition is used optimally by our estimator---that is, whether the given rates are sharp. Indeed, even in the simple case with two nodes and the graph $1 \rightarrow 2$, with $d_1=d_2=3$, we could not formally establish a matching lower bound on the rate. An experiment towards the question of sharpness is presented in Appendix \ref{app:lower.muA}.

    \vspace{0.5em}
    \noindent
    {\bf Kernels which are Total Variation-Lipschitz.}	
	To move towards faster rates which are sharp, we work under a stronger (yet, as we shall explain, natural) continuity assumption on the stochastic kernels.
    Denote by $\TV$ the total variation distance, that is, $\TV(\nu,\tilde{\nu}) = \sup_f (\int f \, d\nu - \int f\,d\tilde \nu)$ where the supremum is taken over all measurable functions $f$ satisfying $|f|\leq 1/2$.
    In the setting of this paper, we always have that the Wasserstein distance is upper bounded by the total variation distance, which leads to the following strengthening of Assumption \ref{ass:W.Lip}:
    To formulate it, we write $\anc(k) := \{1, \ldots, k-1\} \setminus \pa(k)$.

    \begin{assumption}[with constant $L$]
    \label{ass:TV.Lip}
       The measure $\mu\in\mathcal{P}_G(\X)$ is such that
        \begin{align*}
    			\TV(\mu(dx_k \mid x_{\pa(k)}), \mu(dx_k \mid \tilde{x}_{\pa(k)})) 
                &\leq L \|x_{\pa(k)} - \tilde{x}_{\pa(k)}\|,\\
                \TV(\mu(dx_{\anc(k)} \mid x_{\pa(k)}), \mu(dx_{\anc(k)} \mid \tilde{x}_{\pa(k)})) 
                &\leq L \|x_{\pa(k)} - \tilde{x}_{\pa(k)}\|,
        \end{align*}
        for all $2\leq k \leq K$, $x_{\pa(k)},\tilde{x}_{\pa(k)}\in\X_{\pa(k)}$ and, for all $x_k, \tilde{x}_k \in \X_k$,
        \begin{align*}
    			\TV(\mu(dx_{\pa(k)} \mid x_k), \mu(dx_{\pa(k)} \mid \tilde{x}_k)) 
                &\leq L \|x_k - \tilde{x}_k\|.
    		\end{align*}
    \end{assumption}

Intuitively, the first condition in  Assumption \ref{ass:TV.Lip} states that small changes in the cause (the parents $\pa(k)$) lead to small changes in the effect’s (the node $k$) distribution, while the other conditions mean that the distribution of the cause remains stable under varying observed effects. 
Though the second may seem less intuitive at first, it can be viewed as a form of stable Bayesian updating---a natural assumption. For instance for a Markov graph $1 \rightarrow 2 \rightarrow \ldots \rightarrow K$, one quickly checks that Assumption \ref{ass:TV.Lip} reduces to both the conditional distributions $x_{k-1}\mapsto \mu(dx_k \mid x_{k-1})$ and $x_{k} \mapsto \mu(dx_{k-1} \mid x_k)$ being Lipschitz.
We show in Lemma~\ref{lem:char.TV} that Assumption~\ref{ass:TV.Lip} admits an equivalent formulation in terms of the existence of a density, relative to a suitable product reference measure, whose sections are Lipschitz continuous in an integrated sense.

The following is the main result of this paper, showcasing the estimation under the strengthened Lipschitz condition. To this end, set $d_{\rm max}:=\max_{1\leq k\leq K} \max\{2,d_k\}$.
\begin{theorem}
    \label{thm:main.TV.informal}
    Assume $G$ satisfies Assumption~\ref{ass:graph_struc}, fix $L > 0$ and denote by $\mathcal{Q}$ the set of measures $\mu \in \mathcal{P}_G$ which satisfy Assumption~\ref{ass:TV.Lip} with constant $L$. Then, there exists a constant $C$ depending only on $G$, $L$ and $d_{\rm loc}$ such that
       \[
	   V_{\mathcal{Q}}(n) \leq C \cdot \max\{\log(n),1\} \left(n^{-2/(2+d_{\rm loc})} + n^{-1/d_{\rm max}} \right).
     \]
\begin{proof}
    The result follows from Theorem \ref{thm:estTV}.
\end{proof}
\end{theorem}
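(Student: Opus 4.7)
My plan is to build $\hat\mu$ node by node according to the graph's factorization and bound the resulting Wasserstein error via a decomposition along the graph. For each root node $k$ I would simply take the empirical marginal $\tfrac{1}{n}\sum_i \delta_{X^i_k}$; for each non-root node $k$, I would estimate the conditional kernel $\mu(dx_k \mid x_{\pa(k)})$ by a smoothed/histogram estimator applied to the pairs $(X^i_{\pa(k)}, X^i_k)_{i=1}^n$, with bandwidth $h_k \asymp n^{-1/(2+d_k+d_{\pa(k)})}$ chosen to balance bias and variance for density estimation on $\X_{\pa(k)} \times \X_k$. Setting $\hat\mu(dx) := \prod_k \hat\mu(dx_k \mid x_{\pa(k)})$ then defines the overall estimator.

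The next step is a graph decomposition analogous to Lemma~\ref{lem:Wdecomp}: using Assumption~\ref{ass:graph_struc} together with the Lipschitz kernel conditions of Assumption~\ref{ass:TV.Lip} to repeatedly replace one factor $\mu(\cdot \mid x_{\pa(k)})$ by $\hat\mu(\cdot \mid x_{\pa(k)})$ and absorb the resulting mismatch across the chain, one should obtain
\[
\W(\mu, \hat\mu) \;\leq\; C_{G,L} \left( \sum_{k \text{ root}} \W(\mu_k, \hat\mu_k) \;+\; \sum_{k \text{ non-root}} \W\!\left(\mu(dx_k, dx_{\pa(k)}),\, \hat\mu(dx_k, dx_{\pa(k)})\right) \right).
\]
The no-collider Assumption~\ref{ass:graph_struc} is what allows us to aggregate the per-kernel error into an error on the joint $(x_k, x_{\pa(k)})$ in dimension $d_k + d_{\pa(k)}$ rather than inflating to a larger ambient dimension. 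Each root term is controlled by the classical empirical Wasserstein bound, giving $\mathbb{E}\,\W(\mu_k, \hat\mu_k) \lesssim \max\{\log n,1\}\, n^{-1/\max\{2, d_k\}}$, which sums to the $n^{-1/d_{\rm max}}$ contribution in the theorem.

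For the non-root terms I would use Assumption~\ref{ass:TV.Lip} to establish the fast Wasserstein rate associated to estimation of distributions with Lipschitz density in dimension $d_k + d_{\pa(k)}$. The forward TV-Lipschitz condition yields Lipschitz regularity of the joint of $(x_k, x_{\pa(k)})$ in the $x_{\pa(k)}$ direction, and the reverse condition does the same in the $x_k$ direction, so (after regularisation of low-density regions) one can invoke the classical $n^{-(\beta+1)/(2\beta+d)}$ Wasserstein density-estimation rate at $\beta = 1$ and $d = d_k + d_{\pa(k)}$, obtaining
\[
\mathbb{E}\,\W\!\left(\mu(dx_k, dx_{\pa(k)}),\, \hat\mu(dx_k, dx_{\pa(k)})\right) \;\lesssim\; \max\{\log n,1\}\, n^{-2/(2+d_k+d_{\pa(k)})} \;\leq\; \max\{\log n,1\}\, n^{-2/(2+d_{\rm loc})}.
\]
Summing over the finitely many nodes and absorbing $K$ into $C$ then yields the claimed bound.

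The main obstacle is the non-root step. Assumption~\ref{ass:TV.Lip} is stated only in terms of TV-Lipschitz kernels and does not automatically hand us a globally Lipschitz density bounded away from zero, which is what a textbook KDE analysis requires. I expect one has to work in the dual Kantorovich formulation $\W(\mu, \nu) = \sup_{\|f\|_{\rm Lip}\le 1} \int f\,d(\mu-\nu)$ directly: the forward TV-Lipschitz assumption should yield bias $\lesssim L h_k$ uniformly over Lipschitz test functions, while a centered-variance estimate against Lipschitz test functions (which are stable under convolution with the smoothing kernel) gives a variance of order $(n h_k^{d_k+d_{\pa(k)}})^{-1/2}$. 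Optimising $h_k$ then produces the fast rate $n^{-2/(2+d_k+d_{\pa(k)})}$, rather than the slower rate $n^{-1/(2+d_k+d_{\pa(k)})}$ that a naive $\W \leq \diam \cdot \TV$ reduction would give, and the constants must be tracked carefully so that they depend only on $G$, $L$, and $d_{\rm loc}$ as claimed.
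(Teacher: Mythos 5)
There are two genuine gaps. First, your graph decomposition is not what Lemma~\ref{lem:Wdecomp} provides: that lemma bounds $\W(\mu,\nu)$ by \emph{integrated conditional} distances $\int \W(\mu(\cdot\mid y_{\pa(k)}),\nu(\cdot\mid y_{\pa(k)}))\,\nu(dy)$, and such kernel-level errors are \emph{not} controlled by the Wasserstein distance between the joints of $(x_{\pa(k)},x_k)$ (this is exactly the adapted-Wasserstein phenomenon discussed around \cite{backhoff2022estimating}); passing from joint closeness to kernel closeness would at least require quantitative regularity of the \emph{estimated} kernels and an argument you do not supply. Second, and more decisively, your per-node claim $\mathbb{E}\,\W\bigl(\mu(dx_k,dx_{\pa(k)}),\hat\mu(dx_k,dx_{\pa(k)})\bigr)\lesssim \log n\, n^{-2/(2+d_k+d_{\pa(k)})}$ is false under Assumption~\ref{ass:TV.Lip}. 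The assumption constrains only the kernels, not the marginals: a product measure $\mu_{\pa(k)}\otimes\mu_k$ with arbitrary (possibly singular or discrete) factors satisfies it with $L=0$, and then the joint cannot be estimated faster than $n^{-1/\max\{d_k,d_{\pa(k)}\}}$, which is slower than $n^{-2/(2+d_k+d_{\pa(k)})}$ whenever one of the two blocks is much larger than the other (e.g.\ $d_{\pa(k)}=1$, $d_k=5$). In particular the joint need not have any Lebesgue density, so the classical $n^{-(\beta+1)/(2\beta+d)}$ Lipschitz-density rate simply does not apply, and your attribution of the $n^{-1/d_{\rm max}}$ term to root nodes only cannot be right: in the paper this term arises from the completely unrestricted \emph{within-cell} marginals of every node (Lemma~\ref{lem:E.sup.fmu.minus.fhatmu}), roots and non-roots alike.

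Your fallback dual argument also does not deliver the fast rate: with bias $\lesssim Lh_k$ and variance $\lesssim (nh_k^{d_k+d_{\pa(k)}})^{-1/2}$, optimizing $h_k$ gives $n^{-1/(2+d_k+d_{\pa(k)})}$, i.e.\ the slow rate of Section~\ref{sec:wlip}, not $n^{-2/(2+d_{\rm loc})}$. The entire point of the paper's construction $\mu^{b\mathcal{A}}$ is to force a \emph{quadratic} bias $\W(\mu,\mu^{b\mathcal{A}})\lesssim\delta_{\mathcal{A}}^2$: smoothing the kernels in both directions makes the estimator locally a product measure on each cell, and the first-order error then integrates to zero against each cell (the cancellation $(3)=0$ in Lemma~\ref{lem:mu.bA.mu.almost.same.cell}, which is exactly where the backward TV-Lipschitz condition enters), after which the variance carries an extra factor $\delta_{\mathcal{A}}$ because transport within a cell costs at most $\delta_{\mathcal{A}}$ (Lemmas~\ref{lem:W.mu.M.hat.mu.M} and~\ref{lem:E.sup.fmu.minus.fhatmu}); balancing $\delta_{\mathcal{A}}^2$ against $\delta_{\mathcal{A}}^{1-d_{\rm loc}/2}n^{-1/2}$ yields $n^{-2/(2+d_{\rm loc})}$. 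Your forward-only kernel histogram has bias of order $h_k$ and no such cancellation, so it cannot beat the rate of Theorem~\ref{thm:estWLip}. To repair the proposal you would need both a mechanism producing second-order bias from the bidirectional TV-Lipschitz assumption and a separate treatment of the arbitrary (cell-level) marginals producing the $n^{-1/d_{\rm max}}$ term for all nodes.
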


 For concreteness, note that in the context of the example in Figure~\ref{fig:rooted-tree} with $d_k=2$ for all $k$, the rate obtained in the theorem is $n^{-1/3}$.
We also emphasize again, as in Theorem~\ref{thm:main.W.informal}, that the $\log(n)$-factor is only needed in cases where $d_k=2$ leads to the dominant terms in $d_{\rm loc}$, and that the estimator achieving the given rate is highly tractable and discrete, see Definition~\ref{def:mu.ba} and Algorithm \ref{alg:hatmubA}.
 Moreover, a graph-adaptive estimator achieves the same rate when \(G\) is unknown, at the cost of reduced computational tractability; see Theorem~\ref{thm:main.TV.adaptive}.
    More importantly and in contrast to Theorem \ref{thm:main.W.informal}, the established rate in Theorem \ref{thm:main.TV.informal} is actually sharp! (At least up to the $\log(n)$ term.) 
    
\begin{proposition}\label{lem:sharp}
    In the setting of Theorem \ref{thm:main.TV.informal}: Suppose further that $d_{\rm loc}$ is attained for some $k$ satisfying $d_k \geq 2$  and that $L\geq 2$. Then, there exists an absolute constant $C > 0$ such that
    \[
	 V_{\mathcal{Q}}(n)
    \geq C  \left( n^{-2/(2+d_{\rm loc})} +  n^{-1/ d_{\rm max}} \right).
	\]
\end{proposition}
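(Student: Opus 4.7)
The plan is to exhibit two subfamilies $\mathcal{Q}_1,\mathcal{Q}_2\subseteq \mathcal{Q}$ that separately witness the two terms of the lower bound, then combine via
\[
V_{\mathcal{Q}}(n)\ \geq\ \max\bigl(V_{\mathcal{Q}_1}(n),\, V_{\mathcal{Q}_2}(n)\bigr)\ \geq\ \tfrac12\bigl(V_{\mathcal{Q}_1}(n)+V_{\mathcal{Q}_2}(n)\bigr).
\]
For the $n^{-1/d_{\rm max}}$ term, I pick $k^\star$ attaining $d_{\rm max}=\max_k\max\{2,d_k\}$, fix any $x^0\in\X$, and set
\[
\mathcal{Q}_1:=\Bigl\{\mu_{k^\star}\otimes \textstyle\prod_{j\neq k^\star}\delta_{x^0_j}\ :\ \mu_{k^\star}\in\mathcal{P}(\X_{k^\star})\Bigr\}.
\]
Each element lies in $\mathcal{P}_G$, and all kernels entering Assumption~\ref{ass:TV.Lip} can be taken constant in the conditioning variable, so the TV-Lipschitz conditions hold with constant $0\leq L$ and $\mathcal{Q}_1\subseteq\mathcal{Q}$. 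Since $\W(\mu,\nu)=\W(\mu_{k^\star},\nu_{k^\star})$ for $\mu,\nu\in\mathcal{Q}_1$ and the $\X_{k^\star}$-marginal of any estimator is itself an estimator of $\mu_{k^\star}$, $V_{\mathcal{Q}_1}(n)$ dominates the minimax Wasserstein-$1$ risk for arbitrary probability measures on $[0,1]^{d_{k^\star}}$. Classical lower bounds (a Dirac-packing/Assouad argument when $d_{k^\star}\geq 3$, a two-point Le Cam reduction when $d_{k^\star}\in\{1,2\}$) give $V_{\mathcal{Q}_1}(n)\gtrsim n^{-1/\max\{2,d_{k^\star}\}}=n^{-1/d_{\rm max}}$.

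For the $n^{-2/(2+d_{\rm loc})}$ term, I pick $k^\star$ attaining $d_{\rm loc}$, which by hypothesis satisfies $d_{k^\star}\geq 2$, and set $I:=\pa(k^\star)\cup\{k^\star\}$ so that $\sum_{k\in I}d_k=d_{\rm loc}$. Define
\[
\mathcal{Q}_2:=\Bigl\{\mu_I\otimes \textstyle\prod_{j\notin I}\delta_{x^0_j}\ :\ \mu_I\text{ has density }f\text{ on }[0,1]^{d_{\rm loc}}\text{ with }f\geq c,\ \|\nabla f\|_\infty\leq L_0\Bigr\},
\]
where $c,L_0>0$ are chosen small enough that Lemma~\ref{lem:density} yields TV-Lipschitz kernel constants for $\mu_I$ bounded by $L$. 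Membership $\mathcal{Q}_2\subseteq\mathcal{Q}$ then holds: kernels indexed outside $I$ condition on deterministic variables and are taken constant, while the nontrivial kernels are conditionals of $\mu_I$ and are controlled by Lemma~\ref{lem:density}. Since again $\W(\mu,\nu)=\W(\mu_I,\nu_I)$ on $[0,1]^{d_{\rm loc}}$ for $\mu,\nu\in\mathcal{Q}_2$, $V_{\mathcal{Q}_2}(n)$ dominates the Wasserstein-$1$ minimax risk for Lipschitz densities bounded below by $c$ on $[0,1]^{d_{\rm loc}}$. A standard Assouad argument lower-bounds this by $n^{-2/(2+d_{\rm loc})}$: tile the cube into $N\asymp \eps^{-d_{\rm loc}}$ subcubes of side $\eps$, take $f_\theta=1+\alpha\eps\sum_{i=1}^N\theta_i\psi_i$ with $\theta\in\{\pm 1\}^N$ and a smooth mean-zero bump $\psi_i$ of sup-norm one supported in subcube $i$; for $\alpha$ small, $f_\theta\in[c,2]$ and $\|\nabla f_\theta\|_\infty\leq L_0$. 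The per-bit $W_1$ cost is of order $\alpha\eps^{d_{\rm loc}+2}$ and the per-bit $\chi^2$-divergence of order $\alpha^2\eps^{d_{\rm loc}+2}$, so balancing $n\eps^{d_{\rm loc}+2}\asymp 1$ with $\eps\asymp n^{-1/(d_{\rm loc}+2)}$ yields, via Assouad's lemma, the lower bound $N\cdot\alpha\eps^{d_{\rm loc}+2}=\alpha\eps^2\asymp n^{-2/(2+d_{\rm loc})}$.

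The main obstacle is to verify that $\mathcal{Q}_2\subseteq \mathcal{Q}$: all three TV-Lipschitz inequalities in Assumption~\ref{ass:TV.Lip}---in particular the ``reverse'' and the $\anc(k)$-given-$\pa(k)$ conditions---must hold for every node $k$, which amounts to various partial integrations of $f$ being TV-Lipschitz in their conditioning variables; this is exactly what Lemma~\ref{lem:density} provides, provided $c$ and $L_0$ are chosen small enough relative to the single prescribed constant $L$ governing the whole graph. Once this verification is in place, the Assouad computation is routine, and combining through the $\max\geq\tfrac12(\,\cdot\,+\,\cdot\,)$ inequality concludes the argument.
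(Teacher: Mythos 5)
Your overall strategy is the same as the paper's: split the bound into the two terms, obtain the $n^{-1/d_{\rm max}}$ term from the fact that marginals are unconstrained, and obtain the $n^{-2/(2+d_{\rm loc})}$ term by restricting attention to the block $I=\pa(k^\star)\cup\{k^\star\}$ at which $d_{\rm loc}$ is attained (here the hypothesis $d_{k^\star}\geq 2$ gives $\sum_{k\in I}d_k=d_{\rm loc}$, exactly as in the paper), extending measures on $\X_I$ trivially to $\X$, and invoking a minimax lower bound for $\W$-estimation of Lipschitz densities, with Assumption~\ref{ass:TV.Lip} verified through Lemma~\ref{lem:density}. The differences are in execution rather than in the idea: you make the $d_{\rm max}$ term explicit via Dirac-product families (the paper treats it as already explained in the introduction), and you re-derive the Lipschitz-density lower bound by an explicit Assouad construction whose perturbed densities $f_\theta=1+\alpha\varepsilon\sum_i\theta_i\psi_i$ are automatically bounded below, which buys you a self-contained argument and lets you skip the paper's trick of mixing the packing elements with the uniform distribution (which is how the paper enforces the lower bound on the density needed for Lemma~\ref{lem:density}); the paper instead simply cites the known $n^{-2/(2+d_{\rm loc})}$ lower bound and argues that the mixing halves Wasserstein distances. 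Your reduction steps (projection of estimators, $\W(\mu,\nu)=\W(\mu_I,\nu_I)$ for measures agreeing with fixed Diracs outside $I$, and $\max\geq\frac12(\cdot+\cdot)$) are all sound, and the $\mathcal{P}_G$-membership of the extensions uses, as it should, that $I$ is fully connected under Assumption~\ref{ass:graph_struc}.

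One step is stated incorrectly: you cannot choose $c$ (the lower bound on the density) ``small enough'' to make the constant from Lemma~\ref{lem:density} at most $L$. That constant is $\frac{D}{a}+\frac{b}{a^2}$ with $a=c$, which \emph{increases} as $c\downarrow 0$, and since any density on the unit cube satisfies $b\geq 1\geq a$, it can never be pushed below $1$. The correct choice is the opposite regime: densities close to uniform (e.g.\ values in $[1/2,3/2]$ and small Lipschitz constant $L_0$, which your Assouad family satisfies anyway for small $\alpha$), and then the verification via Lemma~\ref{lem:density} only covers $L$ above an absolute constant --- the paper's own proof has the same restriction, as it explicitly uses $L\geq 8$. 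With that correction (and, for genuinely small $L$, a direct computation with perturbations scaled by $L$ rather than an appeal to Lemma~\ref{lem:density}), your argument goes through.
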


The proof of the proposition is given at the end of Section \ref{sec:tvlip}.
We emphasize that the term $n^{-1/d_{\rm max}}$ is clearly necessary, as no restrictions on the marginal distributions for each node are imposed. 
On the other hand, the sharpness of the term $n^{-2/(2+d_{\rm loc})}$ builds on lower bounds for density estimation under Lipschitz conditions. In this context, we emphasize that Theorem \ref{thm:main.TV.informal} is novel even for the graph $1\rightarrow 2$; that is, even without the focus on the graphical structure, but merely focusing on the smoothness aspect, the given result provides new conditions for sharp rates, see Remark \ref{rem:density.estim} and the preceding results for more details.

	\subsubsection{Structure of the paper} The remainder of the paper is structured as follows: We start by reviewing additional related literature. 
    In Section \ref{sec:wlip}, we work in the setting when (forward)-kernels are Lipschitz with respect to the Wasserstein distance.
    Section \ref{sec:wlip} also serves as a warm-up for Section \ref{sec:tvlip} which contains our main results, namely about sharp rates in the setting when (forward and backward)-kernels are Lipschitz with respect to the total variation distance.
    Section \ref{sec:lowerbounds} establishes the lower bounds for $\mathcal{P}_G$ without continuity assumptions on the kernels,  and the Appendix contains additional proofs and two numerical examples.

	\subsection{Related Literature}
	Upper bounds for empirical measures for the $p$-th order Wasserstein distance are a classical topic in probability theory and statistics and are established for instance in \cite{dudley1969speed,fournier2015rate}. A general approach to establish lower bounds is given in \cite{tsybakov} and using smoothness of densities to improve Wasserstein estimation rates is established in \cite{niles2022minimax}.
	
	Another line of work focuses on using weaker notions of distances which do not exhibit the curse of dimensionality, for instance through integral probability metrics under smooth test functions (e.g., \cite{kloeckner2020empirical,singh2018nonparametric}; IPMs notably include the Sinkhorn divergence, cf.~\cite{genevay2019sample}), low-dimensional projections (e.g., \cite{bartl2025structure,manole2022minimax,nietert2022statistical,rodriguez2025improved}) or smoothed versions of Wasserstein distances (e.g., \cite{goldfeld2020convergence}). We refer to \cite[Sections 2.7 and 2.8]{chewi2024statistical} for a detailed overview.
	
	The recent works \cite{bos2024supervised,chakraborty2026generalization,fan2025optimal,gyorfi2022tree,kumar2024likelihood,kwon2026nonparametric,vandermeulen2024breaking} also focus on improved estimation rates of distributions under structural assumptions. They explore estimating smooth densities under additional decomposition assumptions on the densities, and thus the goal of reducing to a local notion of complexity is the same as in this paper.
    The considered smoothness assumptions based on Lebesgue densities are however very different (see Remark \ref{rem:density.estim}).
    On top of the motivations from the current paper, this literature focuses on estimators related to generative networks (cf.~\cite{bos2024supervised,fan2025optimal,kumar2024likelihood,kwon2026nonparametric}) and often specifically in relation to diffusion models, which are hence closely related to practical pipelines using neural networks. In this regard, the adaptivity of the estimators to the graph structure can automatically be achieved by optimizing over suitably sparse neural networks (which is a hard, but practically well understood problem), which builds on \cite{10.1214/19-AOS1875}.

	In \cite{backhoff2022estimating}, the authors study statistical estimation under a stronger notion of Wasserstein distance focusing on differences between stochastic processes by comparing kernels forward in time. A similar goal of learning conditional distributions is pursued in \cite{acciaio2024convergence,benezet2024learning}. 
    The techniques in Section \ref{sec:wlip} (excluding the ones used to prove the graph-adaptive estimator in Theorem \ref{thm:estWLip.adaptoive}) build on the ones used in \cite{backhoff2022estimating}, in particular a similar result to Theorem \ref{thm:main.W.informal} in the particular case where the graph arises from a Markov-chain is given by \cite[Theorem 6.1]{backhoff2022estimating}. 
    It is important to note, however, that the proofs of our main results in Section \ref{sec:tvlip} do not follow from \cite{backhoff2022estimating} and require genuine new ideas and techniques, cf.~Remark \ref{rem:sec23}.

We also mention a related but distinct line of work on faster Wasserstein convergence of empirical measures under lower intrinsic-dimensional assumptions on the target distribution; see, for instance, \cite{weed2019sharp,ledoux2019optimal,hundrieser2024empirical}. 
Although the motivation is similar---namely to replace ambient-dimensional rates by rates governed by a smaller notion of complexity---the mechanism in our paper is different, as it relies on graphical structure and regularity of conditional kernels.
Currently we don't know to what extent these techniques can be combined, but believe it would be interesting, and leave it for future research.

	A string of literature with a different, but related, goal is the one focusing on establishing whether a given probability measure $\mu$ satisfies certain conditional independences (see, e.g., \cite{azadkia2021simple,neykov2021minimax,shah2020hardness}). Similarly to our paper, it turns out that this task is generally impossible (see \cite{shah2020hardness}), but becomes possible under a priori smoothness conditions on the stochastic kernels involved (see \cite{azadkia2021simple,neykov2021minimax}).
	Beyond that, the recent works \cite{faller2024self,gresele2022causal,sani2023bounding} explore causal inference tasks using the technique of combining distributions of partly overlapping sets of variables of a graphical model. 
    In this regard, the estimators used in Sections \ref{sec:wlip} and \ref{sec:tvlip} are slightly related as they are also based on gluing together estimates from different parts of the graph. Also related is the task of simultaneously estimating a distribution with a tree structure and the corresponding tree, which in discrete settings can be accomplished by the Chow-Liu algorithm (see \cite{chow1968}).

    \subsection{Notation}
    \begin{itemize}\setlength\itemsep{-0.375mm}
        \item $\X = \X_1 \times \ldots \times \X_K$, where $\X_k = [0, 1]^{d_k}$ and $d = \sum_{k=1}^K d_k$
        \item $\|\cdot\|$ will always be the $\infty$-norm (on any $\mathbb{R}^l$ for $l \in \mathbb{N}$)
        \item $G$ is a directed, acyclic graph with nodes $\{1, \ldots, K\}$ that is topologically sorted, that is, all edges $i \rightarrow j$ satisfy $i < j$
        \item $\mathcal{P}(\X)$ is the set of Borel probability measures on $\X$ and $\mathcal{P}_G(\X) \subseteq \mathcal{P}(\X)$ the subset of probability measures $\mu$ with disintegration $\mu(dx) = \prod_{k=1}^{K} \mu(dx_k \mid x_{\pa(k)})$, where $\mu(dx_k \mid x_{\pa(k)})$ are the regular conditional distributions of the $k$-th variable given its parent variables
        \item $\W$ is the Wasserstein distance and $\TV$ the total variation distance
        \item For $I \subseteq \{1, \ldots, K\}$ and $x \in \X$, set $\X_I = \prod_{i \in I} \X_i$ and $x_I = (x_k)_{k \in I} \in \X_I$
        \item $\mathcal{A}$ always denotes a partition of $\X$ into cubes of side length $\delta_{\mathcal{A}}$ (to be specified in what follows), and $\mathcal{A}_I$ is the implied partition of $\X_I$, and similarly $\mathcal{A}_k$ the one of $\X_k$. We usually denote by $c$ the cells in $\mathcal{A}$, so $\stackrel{.}{\cup}_{c \in \mathcal{A}} c = \X$
        \item For $B\subset \X_{I}$ and $\nu\in\mathcal{P}(\X)$ we write $\nu(B):=\nu(\pi_I^{-1}(B))$ where $\pi_I\colon\X\to\X_I$ is the canonical projection
        \item For $c\in\mathcal{A}_I$ we set $\nu|_c(\cdot):=\nu_{I}(\cdot\mid c)\in\mathcal{P}(\X_I)$ if $\nu(c)>0$ and $\nu|_c(\cdot)=\delta_{m}$ else, where $m$ is the mid-point of $c$
        \item Denote by $\pa(k) \subseteq \{1, \dots, K\}$ the parent nodes of $k$ according to $G$, and we often write $x_{\pa(k)}, \X_{\pa(k)}, \mathcal{A}_{\pa(k)}$ etc.~for $I=\pa(k)$ as above
        \item For a map $\varphi$ and a probability measure $\mu$, we denote by $\varphi(\mu)$ the pushforward $\varphi(\mu)(A) = \mu(\varphi^{-1}(A))$
        \item We write $1:k$ for the set $\{1, \dots, k\}$ and correspondingly $x_{1:k}, \X_{1:k}$, etc.
        \item $\eins_A$ is the indicator function of a set $A$, so $\eins_A(x) = 1$ if $x \in A$ and $\eins_A(x) = 0$, else
        \item $\hat{\mu} = \frac{1}{n}\sum_{i=1}^n \delta_{X^i}$ is the empirical measure of $\mu$. 
        We suppress the dependence on $n$ for notational clarity, as lower indices denote placements in the graph 
    \end{itemize}

	\section{Upper bounds for graphical models with Wasserstein-Lipschitz kernels}\label{sec:wlip}

This section establishes faster rates for graphical models with transition kernels that are Lipschitz in Wasserstein distance.
In addition, we introduce  some of the notation and tools needed to analyse our estimators, laying the groundwork for the more involved analysis in Section~\ref{sec:tvlip}.

Let $\eta \in \mathbb{N}\cup\{0\}$ be some parameter that is specified in what follows and set $\mathcal{A}$ to be a partition of $\X$ into hypercubes of side length $\delta_{\mathcal{A}} = 2^{-\eta}$,  hence $|\mathcal{A}| = 2^{\eta d}$ and  $|\mathcal{A}_S| = 2^{\eta d_S}$ for $S\subseteq\{1,\dots,K\}$. 
For $x \in \X$, denote by $c(x) \subseteq \X$ the unique cell of the hypercube containing $x$; similarly $c_S(x_S)$ is the unique cell in $\mathcal{A}_S$ containing $x_S\in \X_S$. For the following, recall the convention that if $\pa(k)$ is empty, then $\mu(dx_k \mid x_{\pa(k)})$ is understood as the $k$-th marginal distribution. 

We start by defining a ``smoothing''-operation $\mathcal{P}(\X)\ni\nu\mapsto \nu^\mathcal{A} $  and our estimator.

\begin{algorithm}[t]
\caption{Construction of $\hat\mu^\mathcal{A}$ and sampling from it}
\label{alg:hatmuA}
\begin{algorithmic}[1]
\Statex \textbf{Input:} DAG $G$ on $\{1,\dots,K\}$ (topologically sorted); samples $X^1,\dots,X^n\in\X$; resolution $\eta\in\mathbb{N}$.
\Statex \textbf{Output:} kernels $(\hat{\mathcal{K}}_k)_{k=1}^K$ and a procedure \textsc{Sample}() returning a draw from $\hat\mu^\mathcal{A}$.
\Statex For $S\subseteq\{1,\dots,K\}$, let $\mathrm{ind}_S(x_S):=\bigl(\lfloor 2^\eta x_\ell\rfloor\wedge(2^\eta-1)\bigr)_{\ell\in S}$ index the cell of $\mathcal{A}_S$ containing $x_S$.
\For{$k=1,\dots,K$}
   \If{$\pa(k)=\emptyset$} $\hat{\mathcal{K}}_k \gets \tfrac{1}{n}\sum_{i=1}^n \delta_{X^i_k}$  \Comment{$\frac{1}{|S|}\sum_{x \in S}\delta_x$ can be stored as the list $S$.}
   \Else
      \State initialise $\hat{\mathcal{K}}_k, U_k$ as empty dictionaries with default $()$
      \For{$i=1,\dots,n$} append $X^i_k$ to $U_k[\mathrm{ind}_{\pa(k)}(X^i_{\pa(k)})]$
      \EndFor
      \ForAll{$j$ with $U_k[j]\neq()$} $\hat{\mathcal{K}}_k[j] \gets \tfrac{1}{|U_k[j]|}\sum_{x\in U_k[j]}\delta_x$ \Comment{list structure, allows duplicates}
      \EndFor
   \EndIf
\EndFor
\Procedure{Sample}{\,}
   \For{$k=1,\dots,K$}
      \If{$\pa(k)=\emptyset$} draw $X_k\sim\hat{\mathcal{K}}_k$
      \Else\ draw $X_k\sim\hat{\mathcal{K}}_k[\mathrm{ind}_{\pa(k)}(X_{\pa(k)})]$ \Comment{$\hat{\mathcal{K}}_k[\mathrm{ind}_{\pa(k)}(X_{\pa(k)})] \neq ()$ by Ass.~\ref{ass:graph_struc} \& Lem.~\ref{lem:muAwelldefined}}
      \EndIf
   \EndFor
   \State \Return $(X_1,\dots,X_K)$
\EndProcedure
\end{algorithmic}
\end{algorithm}

	\begin{definition}
    \label{def:mu.A}
		For every $\nu \in \mathcal{P}(\X)$ and $k=1,\dots,K$, set
        \[
		\nu^{\mathcal{A}}(dx_k \mid x_{\pa(k)}) :=
        \begin{cases}\int \nu(dx_k \mid \tilde{x}_{\pa(k)}) \,\nu_{|c_{\pa(k)}(x_{\pa(k)})}(d\tilde{x}_{\pa(k)}) &\text{if }\pa(k) \neq \emptyset, \\
        \nu(dx_k) &\text{else},
        \end{cases}
		\]
                and define $\nu^\mathcal{A}(dx) := \prod_{k=1}^K \nu^\mathcal{A}(dx_k \mid x_{\pa(k)})$.
                Moreover, define 
        $\hat{\mu}^\mathcal{A} := \left( \hat{\mu}\right)^{\mathcal{A}},$
    that is, $\hat{\mu}^\mathcal{A}$ is obtained by applying the operation $\nu\mapsto \nu^\mathcal{A}$ to the empirical measure $\nu=\hat{\mu}$.
	\end{definition}
    
    Since the kernels $\nu(dx_k \mid \tilde{x}_{\pa(k)})$ are only $\nu$-almost surely unique, it is a priori not obvious that $\nu^\mathcal{A}$ is well-defined; this is shown in  Lemma \ref{lem:muAwelldefined}.    
    Moreover, by definition, $\nu^\mathcal{A}$ is always compatible with the graph, that is $\nu^\mathcal{A}\in\mathcal{P}_G(\X)$---even if $\nu\notin\mathcal{P}_G(\X)$.
    In particular, $\hat\mu^\mathcal{A}\in \mathcal{P}_G(\X)$.
    An algorithmic description to obtain $\hat\mu^{\mathcal{A}}$ is provided in Algorithm \ref{alg:hatmuA}. We note that $\hat\mu^{\mathcal{A}}$ remains a discrete measure, but typically has more than $n$ atoms. For the simple graph $1\rightarrow 2$, we refer to Figure~\ref{fig:mu.bA} (middle) in Section \ref{sec:tvlip} for an illustration of $\hat\mu^{\mathcal{A}}$.

The following is the main result of this section for statistical estimation using Wasserstein-Lipschitz kernels. 
Recall $d_{\rm loc} = \max_{1\leq k \leq K} (\max\{2, d_k\} + d_{\pa(k)})$.
    
	\begin{theorem}\label{thm:estWLip} 
    Suppose that  Assumptions \ref{ass:graph_struc} and \ref{ass:W.Lip} are satisfied and set $\eta=\lfloor \frac{1}{d_{\rm loc}} \log_2(n)\rfloor$.
    Then, the estimator $\hat\mu^\mathcal{A}$ (with the current choice of $\eta$) satisfies that
		\begin{align*}
		\mathbb{E}\left[ \W(\mu, \hat{\mu}^{\mathcal{A}}) \right] 
        &\leq C \cdot l_n \cdot n^{-1/d_{\rm loc}},
		\end{align*}
        where $C$ is a constant that depends only on $G, L, d_{\rm loc}$ and $l_n = 
        \max\{1,\log(n)\}$ if there is a node $k$ with $d_k=2$ and $d_{\rm loc} = d_k + d_{\pa(k)}$ and $l_n=1$ otherwise.
    \end{theorem}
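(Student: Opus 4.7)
The plan is to use the triangle inequality to split
\[
\mathbb{E}[\W(\mu,\hat\mu^{\mathcal{A}})]\;\leq\;\underbrace{\W(\mu,\mu^{\mathcal{A}})}_{\text{bias}}\;+\;\underbrace{\mathbb{E}[\W(\mu^{\mathcal{A}},\hat\mu^{\mathcal{A}})]}_{\text{variance}},
\]
and control each piece by means of a Wasserstein ``gluing'' lemma (presumably Lemma \ref{lem:Wdecomp}, alluded to after Theorem \ref{thm:main.W.informal}) that says: for $\rho,\tilde\rho\in\mathcal{P}_G(\X)$ whose disintegrations share a common graph $G$ (satisfying Assumption \ref{ass:graph_struc}, so that $\pa(k)$ is a clique and the disintegration can be built up one node at a time without colliders), one has a telescoping bound of the form
\[
\W(\rho,\tilde\rho)\;\leq\; \sum_{k=1}^{K}\int \W\!\bigl(\rho(dx_k\mid x_{\pa(k)}),\tilde\rho(dx_k\mid x_{\pa(k)})\bigr)\,\rho(dx_{\pa(k)}),
\]
up to a multiplicative constant depending only on the size of $G$. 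Assumption \ref{ass:graph_struc} is the key structural ingredient here: full connectivity of each $\pa(k)$ allows the kernels to be composed consecutively in topological order.

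\textbf{Bias term.} Both $\mu$ and $\mu^{\mathcal{A}}$ lie in $\mathcal{P}_G(\X)$ by construction. For each $k$ with $\pa(k)\neq\emptyset$ and each $x_{\pa(k)}\in\X_{\pa(k)}$, $\mu^{\mathcal{A}}(dx_k\mid x_{\pa(k)})$ is the $\mu_{|c_{\pa(k)}(x_{\pa(k)})}$-average of the kernels $\mu(dx_k\mid\tilde x_{\pa(k)})$ over $\tilde x_{\pa(k)}\in c_{\pa(k)}(x_{\pa(k)})$. By convexity of $\W$ and Assumption \ref{ass:W.Lip},
\[
\W\!\bigl(\mu(\cdot\mid x_{\pa(k)}),\mu^{\mathcal{A}}(\cdot\mid x_{\pa(k)})\bigr)\;\leq\; L\,\mathrm{diam}(c_{\pa(k)}(x_{\pa(k)}))\;\leq\; L\,\delta_{\mathcal{A}}.
\]
Plugging this into the gluing lemma gives $\W(\mu,\mu^{\mathcal{A}})\leq CKL\,\delta_{\mathcal{A}}=CKL\,2^{-\eta}$.

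\textbf{Variance term.} Since $\mu^{\mathcal{A}}$ and $\hat\mu^{\mathcal{A}}$ both lie in $\mathcal{P}_G(\X)$ (after checking via Lemma \ref{lem:muAwelldefined}), another application of the gluing lemma reduces the task to bounding, for each $k$,
\[
\sum_{c\in\mathcal{A}_{\pa(k)}}\mu(c)\;\mathbb{E}\!\left[\W\!\bigl(\mu^{\mathcal{A}}(\cdot\mid c),\hat\mu^{\mathcal{A}}(\cdot\mid c)\bigr)\right].
\]
For a fixed cell $c\in\mathcal{A}_{\pa(k)}$, conditional on the number $n_c:=\#\{i:X^i_{\pa(k)}\in c\}$ of samples falling in $c$, the values $\{X^i_k:X^i_{\pa(k)}\in c\}$ are $n_c$ i.i.d.\ draws from $\mu^{\mathcal{A}}(\cdot\mid c)$, and $\hat\mu^{\mathcal{A}}(\cdot\mid c)$ is exactly their empirical measure on $\X_k=[0,1]^{d_k}$. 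Classical empirical Wasserstein rates (cf.\ \cite{fournier2015rate}) then yield
\[
\mathbb{E}\!\left[\W\!\bigl(\mu^{\mathcal{A}}(\cdot\mid c),\hat\mu^{\mathcal{A}}(\cdot\mid c)\bigr)\;\big|\;n_c\right]\;\leq\; C\,l_{n_c}\,(n_c\vee 1)^{-1/\max\{2,d_k\}},
\]
with the $\log$ factor appearing precisely when $d_k=2$. Averaging over the random $n_c\sim\mathrm{Bin}(n,\mu(c))$ (using a standard split between the event $\{n_c\geq n\mu(c)/2\}$, which has exponentially small complement by Bernstein, and its complement, on which $\W$ is trivially bounded by $\mathrm{diam}(\X_k)$), and then summing over $c$ via H\"older's inequality, gives
\[
\mathbb{E}[\W(\mu^{\mathcal{A}},\hat\mu^{\mathcal{A}})]\;\leq\; C\,l_n\,\sum_{k=1}^K n^{-1/\max\{2,d_k\}}\,2^{\eta\,d_{\pa(k)}/\max\{2,d_k\}}.
\]

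\textbf{Balancing and main obstacle.} The bias $\approx 2^{-\eta}$ and the $k$-th variance summand are equalized when $\eta=\log_2(n)/(\max\{2,d_k\}+d_{\pa(k)})$; choosing $\eta=\lfloor\log_2(n)/d_{\rm loc}\rfloor$ therefore makes every term of order $n^{-1/d_{\rm loc}}$, with the $\log$ factor retained only when the maximum in $d_{\rm loc}$ is achieved at some $k$ with $d_k=2$. The main technical obstacle is establishing (or carefully citing) the gluing lemma: one needs a telescoping identity for $\W$ that sums \emph{additively} over the nodes of $G$, and this is where Assumption \ref{ass:graph_struc} enters, since without it the joint dependence among $x_{\pa(k)}$ cannot be recovered by conditioning on a single clique and the per-node decomposition breaks. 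A secondary subtlety is the random, possibly zero, cell count $n_c$, which requires combining a Bernstein-type lower bound on $n_c$ on the bulk of cells with a deterministic boundedness of $\W$ on the rare ``light'' cells so as to avoid the wrong direction of Jensen's inequality applied to $t\mapsto t^{-1/d}$.
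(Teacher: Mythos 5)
Your overall architecture (a Wasserstein gluing lemma, the conditional-i.i.d.\ reduction to classical empirical rates, and balancing $\delta_{\mathcal{A}}=2^{-\eta}$ against $(n/|\mathcal{A}_{\pa(k)}|)^{-1/\max\{2,d_k\}}$) matches the paper's proof, and your balancing and log-factor bookkeeping are correct. However, there is one genuine gap: in your variance step you invoke the gluing lemma (Lemma \ref{lem:Wdecomp}) for the pair $(\mu^{\mathcal{A}},\hat\mu^{\mathcal{A}})$. That lemma requires the kernels of the \emph{first} argument to satisfy Assumption \ref{ass:W.Lip}, and $\mu^{\mathcal{A}}$ does not: its kernels $x_{\pa(k)}\mapsto\mu^{\mathcal{A}}(dx_k\mid x_{\pa(k)})$ are piecewise constant on the cells of $\mathcal{A}_{\pa(k)}$, so across a cell boundary they can jump by up to order $L\delta_{\mathcal{A}}$ in $\W$ over an arbitrarily small distance, and the one-step estimate $\W(\mu^{\mathcal{A}}(\cdot\mid x_{\pa(j)}),\hat\mu^{\mathcal{A}}(\cdot\mid y_{\pa(j)}))\leq L\|x_{\pa(j)}-y_{\pa(j)}\|+\W(\mu^{\mathcal{A}}(\cdot\mid y_{\pa(j)}),\hat\mu^{\mathcal{A}}(\cdot\mid y_{\pa(j)}))$ used inside the lemma's proof fails as stated. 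This is repairable: either prove a variant of the lemma allowing an extra additive $O(L\delta_{\mathcal{A}})$ per node (harmless for the rate), or do what the paper does, namely apply Lemma \ref{lem:Wdecomp} \emph{once} to $\W(\mu,\hat\mu^{\mathcal{A}})$ with $\mu$ (which does satisfy Assumption \ref{ass:W.Lip}) in the first slot, and only afterwards insert $\mu^{\mathcal{A}}(\cdot\mid c)=\mu(\cdot\mid c)$ locally via the triangle inequality, which folds your bias term into the local terms. As written, ``another application of the gluing lemma'' is not licensed.

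The second place you diverge is the treatment of the random cell counts, and here your route works but is heavier than necessary. Because the paper's lemma integrates the local kernel distances against the \emph{second} measure $\hat\mu^{\mathcal{A}}(dy)$, the weights are the empirical masses $\hat\mu^{\mathcal{A}}(c)$, so each summand is $n^{-1}(n\hat\mu^{\mathcal{A}}(c))^{1-1/\max\{2,d_k\}}$ and a single application of Jensen for the concave map $t\mapsto t^{1-1/\max\{2,d_k\}}$ (using $\sum_c n\hat\mu^{\mathcal{A}}(c)=n$) gives $(n/|\mathcal{A}_{\pa(k)}|)^{-1/\max\{2,d_k\}}$ with no tail bounds and no issue with empty cells (Lemma \ref{lem:conditional.iid} supplies exactly the conditional-i.i.d.\ fact you assert). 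Your version weights by the deterministic $\mu(c)$, which forces the Bernstein/light-cell split you sketch; this does deliver the rate (the light-cell contribution is at most of order $|\mathcal{A}_{\pa(k)}|\log(n)/n$, which is negligible since $d_{\rm loc}\geq 2+d_{\pa(k)}$), but it would have to be carried out explicitly. Finally, your gluing constant ``depending only on the size of $G$'' is imprecise: the constants are $M_{L,k}=1+\sum_{\ell}a_{k,\ell}L^{\ell}$ and depend on $L$ and the path structure of $G$, not just on $K$.
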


In fact, the proof of Theorem \ref{thm:estWLip} gives the constants more explicitly as
\[C \cdot l_n = \sum_{k=1}^K M_{L, k} \begin{cases}
2L + 8 & \text{if } d_k\neq 2,\\
2L + 16 d_{\rm loc} & \text{if } d_k= 2 \text{ and } d_{\rm loc} > d_{\pa(k)} + d_k,\\
2L + 8\max\{\log(n),1\} & \text{if } d_k= 2 \text{ and } d_{\rm loc} = d_{\pa(k)} + d_k,
\end{cases}
\]
where $M_{L, k}  = 1 + \sum_{\ell=1}^K a_{k, \ell}\,L^\ell$ and $a_{k, \ell}$ is the number of paths of length $\ell$ going away from node $k$ in the direction of edges of $G$ (see Figure \ref{fig:constants} in Appendix \ref{sec:proofs.W} for an exemplification of $a_{k,\ell}$).
    
We emphasize that in Theorem \ref{thm:estWLip}, it remains an open question whether the derived rates are sharp. In fact, even for the simplest directed graph with two nodes, $1 \rightarrow 2$, the rate provided by Theorem \ref{thm:estWLip} coincides with the classical $n^{-1/d}$ rate, which means in this case the assumption of Wasserstein-Lipschitz kernels was not helpful.
In this context, we provide in Appendix \ref{app:lower.muA} a  numerical experiment suggesting that the rate $n^{-1/d}$ indeed cannot be improved in general.
In Section \ref{sec:tvlip} we will construct a refinement of the estimator $\mu^\mathcal{A}$ that achieves optimal rates (under a stronger version of Assumption \ref{ass:W.Lip}).

The construction in Theorem \ref{thm:estWLip} requires knowledge of the graph $G$. 
Ignoring computational aspects, one can show that a  cross-validation-type aggregation over candidate graphs (see, e.g., \cite[Section 6]{devroye2001combinatorial} for related methods) achieves the same rate uniformly over $G$. 
To formulate the result, set
\[ \mathcal{G}:=\{ G : G \text{ is a directed acyclic graph satisfying Assumption \ref{ass:graph_struc}}\}\]
and let $d_{\rm loc}(G)$ denote the local dimension associated with the graph $G$.

\begin{theorem}
\label{thm:estWLip.adaptoive}
There exists an estimator $\hat\mu^{ \mathcal{A},ad}$, depending only on the data $X^1,\dots,X^n$ and on $(d_k)_{k=1}^K$, such that for every $G\in\mathcal{G}$ and every $\mu \in \mathcal{P}_G$ satisfying Assumption \ref{ass:W.Lip} with constant $L$,
\begin{align*}
    \E{\W(\mu, \hat\mu^{ \mathcal{A},ad})} \;\leq\; C \cdot \max\{1,\log(n)\} \cdot n^{-1/d_{\rm loc}(G)},
\end{align*}
where $C$ is a constant depending only on $L$, $K$, and $(d_k)_{k=1}^K$.
\end{theorem}


\subsection{Main ingredients in the proof of Theorem \ref{thm:estWLip}}

The idea behind the proof of Theorem~\ref{thm:estWLip} is the following. 
The estimator $\hat\mu^{\mathcal A}$ replaces the empirical conditional kernels by averages over small parent cells. 
If the true kernels are Wasserstein-Lipschitz, this recombination introduces only a bias of order $\delta_{\mathcal A}$. 
The remaining stochastic error is then local: for each node $k$, one has to estimate $d_k$-dimensional conditional distributions on roughly $\delta_{\mathcal A}^{-d_{\pa(k)}}$ parent cells. 
This, at least heuristically, explains the appearance of the local dimension $    d_{\pa(k)}+\max\{2,d_k\}.$

The details of the proof follow arguments similar to those in \cite{backhoff2022estimating}. 
There are, however, two points specific to the present graphical setting. 
First, the constants $M_{L,k}$ require slightly more careful bookkeeping, since errors propagate along all directed paths starting at node $k$. 
Second, the operation $\nu\mapsto\nu^{\mathcal A}$ has to be shown to be well-defined, i.e.\ independent of the choice of versions of the kernels $\nu(dx_k\mid \tilde x_{\pa(k)})$ on $\nu$-null sets. 
We give this well-definedness argument in the main text, only sketch the proof of Theorem~\ref{thm:estWLip}, and defer the details to Appendix \ref{sec:proofs.W}; leaving the main text for  the substantially different proof of our main result, Theorem~\ref{thm:estTV}, which is given in Section~\ref{sec:tvlip}.
We also note that the proof of Theorem \ref{thm:estWLip.adaptoive} (presented in Appendix \ref{sec:proofs.W}) does not follow from \cite{backhoff2022estimating}.


The following Lemma shows the importance of Assumption \ref{ass:graph_struc}: It states that under the given graphical assumption, our estimator really only performs a \emph{local} smoothing, but the global distribution of the mass (that is, the mass given to each cell), remains unchanged.

      \begin{lemma}
      \label{lem:muAwelldefined}
        If Assumption \ref{ass:graph_struc} is satisfied, $\nu^{\mathcal{A}}$ does not depend on the particular choice of the kernels of $\nu$.
        Moreover, for every fully connected set\footnote{Recall that this means there is an edge between any two nodes in the set.}  $I \subseteq \{1, \dots, K\}$ and for all $c_I \in \mathcal{A}_{I}$, we have that $\nu^{\mathcal{A}}(c_I)=\nu(c_I)$.
        \begin{proof}
         We start with a supplementary observation: For every $k$ and $c_{\pa(k)} \in \mathcal{A}_{\pa(k)}$ with $\nu(c_{\pa(k)}) > 0$, and any Borel set $B\subset \X_k$,  regardless of the particular choice of the kernels,
            \begin{align}
            \label{eq:nu.A.kernels.identity}
            \begin{split}
                \nu^{\mathcal{A}}(B \mid c_{\pa(k)}) &
            = \int_{c_{\pa(k)}} \nu(B \mid \tilde{x}_{\pa(k)}) \frac{\nu(d\tilde{x}_{\pa(k)})}{ \nu(c_{\pa(k)}) }\\ &= \frac{\nu(c_{\pa(k)} \times B)}{\nu(c_{\pa(k)})} = \nu(B \mid c_{\pa(k)}).
              \end{split}
            \end{align}
        
            We now prove the second claim via induction. 
            Specifically, we show that for each $k = 1, \dots, K$, and for every fully connected subset $I \subseteq \{1, \dots, k\}$, it holds that $\nu(c_I) = \nu^{\mathcal{A}}(c_I)$.   
            For the base case $k = 1$, this is immediate since $\nu^\mathcal{A}_1 = \nu_1$. 
            For the induction step from $k - 1$ to $k$, let $I \subseteq \{1, \dots, k\}$ be a fully connected set; we may assume that $k\in I$ because otherwise there is nothing to show.
             Suppose first that $I= \pa(k) \cup \{k\}$ (which is fully connected thanks to Assumption \ref{ass:graph_struc}) and let $c_I\in\mathcal{A}_I$. We may assume $\nu(c_{\pa(k)}) > 0$, since otherwise, by induction hypothesis, $\nu^{\mathcal{A}}(c_{\pa(k)}) = \nu(c_{\pa(k)}) = 0$ holds and thus $\nu^{\mathcal{A}}(c_I) = 0 = \nu(c_I)$.
            By the induction hypothesis and \eqref{eq:nu.A.kernels.identity},
    \begin{align}
    \label{eq:nu.A.cells}
    \begin{split}
    \nu^{\mathcal{A}}(c_{I}) 
    =  \nu^{\mathcal{A}}( c_{\pa(k)}  \times c_k) 
    &=  \nu^{\mathcal{A}}(c_{\pa(k)})  \nu^{\mathcal{A}}(c_k \mid c_{\pa(k)})\\
    &= \nu(c_{\pa(k)}) \nu(c_k \mid c_{\pa(k)}) 
    = \nu(c_{I}).
    \end{split}
    \end{align}
    For a general fully connected set $I$, note that, by Assumption \ref{ass:graph_struc}, $I\subset J:=\pa(k) \cup \{k\}$ as otherwise an edge to $k$ must be missing, and let $c_I\in\mathcal{A}_I$.
    For every $c_{J \setminus I}\in\mathcal{A}_{J\setminus I}$, \eqref{eq:nu.A.cells} implies that  $\nu^{\mathcal{A}}(c_I\times c_{J\setminus I})=\nu(c_I\times c_{J\setminus I})$.
    Thus, since by definition $\nu(c_I) = \nu(c_I\times \X_{J\setminus I})$ and similarly for $\nu^\mathcal{A}$, the claim follows by taking the union over all $c_{J \setminus I}\in\mathcal{A}_{J\setminus I}$.

     It remains to show that $\nu^{\mathcal{A}}$ does not depend on the particular choice of the kernels of $\nu$.
     To that end, for every $c_{\pa(k)}$ for which $\nu(c_{\pa(k)})>0$,  and for $x_{\pa(k)}\in c_{\pa(k)}$, $\nu^\mathcal{A}(dx_k\mid x_{\pa(k)})$ does not depend on the particular choice of disintegration $\nu(dx_k\mid \tilde{x}_{\pa(k)})$ because all disintegrations are $\nu$-almost surely equal. 
     Next, if  $c_{\pa(k)}$ satisfies $\nu(c_{\pa(k)})=0$, then by the first part we must always have $\nu^\mathcal{A}(c_{\pa(k)})=0$, hence $\nu^\mathcal{A}(dx_k\mid x_{\pa(k)})$ is irrelevant for $x_{\pa(k)}\in c_{\pa(k)}$.
     This completes the proof.
        \end{proof}
    \end{lemma}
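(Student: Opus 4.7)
My plan is to first establish a key reduction: for every $k$ and every cell $c_{\pa(k)} \in \mathcal{A}_{\pa(k)}$ with $\nu(c_{\pa(k)}) > 0$, and for $x_{\pa(k)} \in c_{\pa(k)}$, the averaged kernel $\nu^{\mathcal{A}}(\cdot \mid x_{\pa(k)})$ coincides with the restriction conditional $\nu(\cdot \mid c_{\pa(k)}) = \nu(c_{\pa(k)} \times \cdot)/\nu(c_{\pa(k)})$. This is just Fubini applied to the definition of $\nu^\mathcal{A}$, combined with the disintegration identity $\int_{c_{\pa(k)}} \nu(B \mid \tilde{x}_{\pa(k)}) \,\nu(d\tilde{x}_{\pa(k)}) = \nu(c_{\pa(k)} \times B)$. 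Two immediate consequences follow: (i) the averaged kernel above $c_{\pa(k)}$ depends only on $\nu$ and the cell, not on the particular version of the disintegration (since any two versions agree $\nu$-a.s.\ on $c_{\pa(k)}$); and (ii) on cells $c_{\pa(k)}$ with $\nu(c_{\pa(k)}) = 0$, the kernel above that cell is irrelevant provided we can show $\nu^{\mathcal{A}}(c_{\pa(k)}) = 0$ as well.

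The second claim of the lemma—that $\nu^{\mathcal{A}}(c_I) = \nu(c_I)$ for every fully connected $I$—I would establish by induction on $k := \max I$. The base case $k = 1$ is immediate because $\nu^\mathcal{A}$ leaves the first marginal untouched. For the induction step, I would first treat the special case $I = \pa(k) \cup \{k\}$. Here Assumption \ref{ass:graph_struc} is crucial: it guarantees that $\pa(k)$ is itself fully connected, so the induction hypothesis applies and yields $\nu^{\mathcal{A}}(c_{\pa(k)}) = \nu(c_{\pa(k)})$. Combining this with the key reduction,
\[
\nu^{\mathcal{A}}(c_I) = \nu^{\mathcal{A}}(c_{\pa(k)}) \cdot \nu^{\mathcal{A}}(c_k \mid c_{\pa(k)}) = \nu(c_{\pa(k)}) \cdot \nu(c_k \mid c_{\pa(k)}) = \nu(c_I).
\]
For a general fully connected $I$ with $\max I = k$, note that every element of $I \setminus \{k\}$ must be adjacent to $k$ and hence lie in $\pa(k)$, so $I \subseteq J := \pa(k) \cup \{k\}$. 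Writing $c_I$ as the disjoint union of $c_I \times c_{J \setminus I}$ over $c_{J \setminus I} \in \mathcal{A}_{J \setminus I}$ and applying the special case to each summand completes the induction.

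The well-definedness claim now falls out of the two observations made after the key reduction: on any cell of positive mass different choices of disintegration produce the same averaged kernel, and on cells of zero mass the just-proved identity $\nu^\mathcal{A}(c_{\pa(k)}) = \nu(c_{\pa(k)}) = 0$ renders the kernel above that cell immaterial for $\nu^\mathcal{A}$. A small point to be careful about is the circularity that seems to arise here (well-definedness is used in the induction, but also asserted as a consequence of it); this is resolved by doing the induction with a fixed, but arbitrary, choice of disintegration and then observing at the very end that the resulting value $\nu^\mathcal{A}(c_I) = \nu(c_I)$ does not depend on the choice.

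The main obstacle I anticipate is the bookkeeping around zero-mass parent cells and the subtle interaction between the induction and the well-definedness statement; conceptually there is nothing deep, but one has to be careful that the quantities one manipulates are unambiguous at each stage. The role of Assumption \ref{ass:graph_struc} is also worth highlighting—without it, $\pa(k)$ would not be fully connected, the induction hypothesis could not be invoked on $c_{\pa(k)}$, and the entire argument would collapse.
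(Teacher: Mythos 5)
Your proposal is correct and follows essentially the same path as the paper's proof: the same disintegration identity $\nu^{\mathcal{A}}(B\mid c_{\pa(k)})=\nu(B\mid c_{\pa(k)})$, the same induction on the largest index of $I$, the same reduction of a general fully connected $I$ to the case $I=\pa(k)\cup\{k\}$, and the same positive/zero-mass dichotomy to dispose of the well-definedness claim. The only detail worth flagging is that your chain of equalities $\nu^{\mathcal{A}}(c_I)=\nu^{\mathcal{A}}(c_{\pa(k)})\,\nu^{\mathcal{A}}(c_k\mid c_{\pa(k)})=\nu(c_{\pa(k)})\,\nu(c_k\mid c_{\pa(k)})$ tacitly assumes $\nu(c_{\pa(k)})>0$; the case $\nu(c_{\pa(k)})=0$ should be handled separately by noting that the induction hypothesis already forces $\nu^{\mathcal{A}}(c_{\pa(k)})=0$ and hence $\nu^{\mathcal{A}}(c_I)=0=\nu(c_I)$, which is exactly what the paper does.
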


The following lemma controls the global Wasserstein distance by local distances between transition kernels and is central to our rate estimates.

    \begin{lemma}\label{lem:Wdecomp}
		Let $\mu, \nu \in \mathcal{P}_G(\X)$ and assume that $\mu$ satisfies Assumption \ref{ass:W.Lip}.
        Then
		\[
		\W(\mu, \nu) \leq \int \sum_{k=1}^K M_{L, k} \W(\mu(\cdot \mid y_{\pa(k)}), \nu(\cdot \mid y_{\pa(k)})) \,\nu(dy),
		\]
		where $M_{L, k} = 1 + \sum_{\ell=1}^K a_{k, \ell}\,L^\ell$ and $a_{k, \ell}$ is the number of paths of length $\ell$ going away from node $k$ in the direction of edges of $G$.
    \end{lemma}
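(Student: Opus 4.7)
The plan is to construct an explicit coupling $\pi$ of $\mu$ and $\nu$ layer by layer along the topological order of $G$, bound the per-coordinate expected errors $e_k := \mathbb{E}^\pi[\|X_k - Y_k\|]$ by a linear recursion, and then unfold that recursion into the path sums $M_{L,k}$.

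For the coupling, I would fix, for each $k$ and each pair $(\xi, \tilde\xi) \in \X_{\pa(k)} \times \X_{\pa(k)}$, a measurable kernel $\pi^{(k)}_{\xi,\tilde\xi}$ realising an optimal $\W$-coupling of $\mu(dx_k \mid \xi)$ and $\nu(dx_k \mid \tilde\xi)$ (by a standard measurable selection, cf.\ \cite{villani2008optimal}). The coupling $\pi$ is then defined recursively: at step $k$, given $(X_{1:k-1}, Y_{1:k-1})$, sample $(X_k, Y_k)$ from $\pi^{(k)}_{X_{\pa(k)}, Y_{\pa(k)}}$. Because the conditional law of $X_k$ given $(X_{1:k-1}, Y_{1:k-1})$ is $\mu(\cdot \mid X_{\pa(k)})$ and thus depends only on $X_{\pa(k)}$, an induction along the topological order combined with $\mu \in \mathcal{P}_G$ yields $X \sim \mu$; the symmetric computation gives $Y \sim \nu$. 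This verification of marginals is the most delicate point of the proof.

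Once the coupling is in place, by optimality of $\pi^{(k)}$, the triangle inequality for $\W$, and Assumption~\ref{ass:W.Lip},
\[
e_k \leq \mathbb{E}\!\left[\W(\mu(\cdot \mid X_{\pa(k)}), \nu(\cdot \mid Y_{\pa(k)}))\right] \leq L\,\mathbb{E}\!\left[\|X_{\pa(k)} - Y_{\pa(k)}\|\right] + b_k,
\]
where $b_k := \int \W(\mu(\cdot \mid y_{\pa(k)}), \nu(\cdot \mid y_{\pa(k)}))\, \nu(dy)$ uses that $Y \sim \nu$. Since $\|\cdot\|$ is the $\infty$-norm, $\|X_{\pa(k)} - Y_{\pa(k)}\| \leq \sum_{j \in \pa(k)} \|X_j - Y_j\|$, giving $e_k \leq b_k + L \sum_{j \in \pa(k)} e_j$. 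In matrix form, $e \leq b + Pe$ where $P_{kj} = L\,\mathbf{1}_{\{j \in \pa(k)\}}$ is strictly lower triangular by topological sortedness and therefore nilpotent, so $e \leq \sum_{m=0}^{K-1} P^m b$.

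The final step is the combinatorial identity $(P^m)_{kj} = L^m \cdot \#\{\text{directed paths of length } m \text{ from } j \text{ to } k\}$, which is immediate by expanding the matrix product. Summing over $k$ and collecting the coefficient of each $b_j$ gives
\[
\sum_{k=1}^K e_k \leq \sum_{j=1}^K \left(1 + \sum_{\ell=1}^K a_{j, \ell}\, L^\ell\right) b_j = \sum_{j=1}^K M_{L,j}\, b_j,
\]
and the conclusion follows from $\W(\mu, \nu) \leq \mathbb{E}[\|X - Y\|] \leq \sum_{k=1}^K e_k$. The only real obstacle is the coupling construction in the first step; the rest is routine algebra on a nilpotent matrix indexed by the DAG.
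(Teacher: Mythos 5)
Your coupling is exactly the one the paper also uses (optimal per-kernel couplings $\gamma^{(x_{\pa(k)}, y_{\pa(k)})}$ glued along the topological order), and the argument that both marginals come out right relies on the same observation: the conditional law of $X_k$ given the full history depends only on $X_{\pa(k)}$, so the $Y$-coordinates can be integrated out and the product structure of $\mathcal{P}_G$ kicks in. The only genuine difference lies in how you track error propagation. The paper introduces a one-parameter family of weighted Wasserstein distances $\W_{m^j}$ on $\X_{1:j}$ and runs a backward induction over $j$, updating a weight vector $m^{j-1}_k = m^j_k + L\, m^j_j \,\mathbf{1}_{\pa(j)}(k)$ and identifying $m^j_j = M_{L,j}$ at the end; this is essentially a dynamic program over the reversed DAG. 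You instead fix the single global coupling once, define per-coordinate errors $e_k = \mathbb{E}[\|X_k - Y_k\|]$, and solve the linear recursion $e \leq b + Pe$ by nilpotence of $P$; the combinatorial identity $(P^m)_{kj} = L^m \cdot \#\{\text{paths of length } m \text{ from } j \text{ to } k\}$ then collects the coefficients into $M_{L,j}$ directly. Both routes produce the same constants, but your matrix formulation makes the path-counting step transparent in one line, whereas the paper defers it to a remark about dynamic programming and a figure. Your version also avoids ever having to state the weighted distance $\W_{m^j}$, at the modest cost of having to carry the global coupling and check its marginals once rather than gluing one layer per step. Both are correct; yours is a clean and arguably more self-contained presentation of the same underlying mechanism.
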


   \begin{proof}[Sketch of proof]
   We sketch the argument in the case of the graph $1\to2$; the general proof is given in Appendix~\ref{sec:proofs.W}. 
    For each $(x_1,y_1)$, choose an optimal coupling between 
$\mu(dx_2\mid x_1)$ and $\nu(dx_2\mid y_1)$, and concatenate it with $\pi \in \Pi(\mu_1, \nu_1)$. 
This gives a coupling of $\mu$ and $\nu$, hence
\[
\mathcal W(\mu,\nu)
\leq
\inf_{\pi\in\Pi(\mu_1,\nu_1)}
\int
\Big(
\|x_1-y_1\|
+
\mathcal W(\mu(\cdot\mid x_1),\nu(\cdot\mid y_1))
\Big)\,\pi(dx_1,dy_1).
\]
By the Wasserstein-Lipschitz assumption on the kernel of $\mu$,
\[
\mathcal W(\mu(\cdot\mid x_1),\nu(\cdot\mid y_1))
\leq
L\|x_1-y_1\|
+
\mathcal W(\mu(\cdot\mid y_1),\nu(\cdot\mid y_1)).
\]
Therefore,
\[
\mathcal W(\mu,\nu)
\leq
(1+L)\mathcal W(\mu_1,\nu_1)
+
\int
\mathcal W(\mu(\cdot\mid y_1),\nu(\cdot\mid y_1))\,\nu_1(dy_1),
\]
as required (since here $M_{L,1}=1+L$ and $M_{L,2}=1$).
\end{proof}

We also use the following rather intuitive observation, proved in Appendix~\ref{sec:proofs.W}.

    \begin{lemma}
        \label{lem:conditional.iid}
        Let $k\leq K$, let $m\leq n$, and let $c_{\pa(k)}\in \mathcal{A}_{\pa(k)}$ with $\mu^{\mathcal{A}}(c_{\pa(k)})>0$.
        Then, conditionally on the event $n\cdot \hat{\mu}^{\mathcal{A}}(c_{\pa(k)})=m$, the random probability measure $\hat{\mu}^{\mathcal{A}}(\cdot \mid c_{\pa(k)})$ has the same distribution as the empirical measure of $\mu^{\mathcal{A}}(\cdot \mid c_{\pa(k)})$ with sample size $m$.
    \end{lemma}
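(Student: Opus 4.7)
My plan is to reduce the statement to the classical fact that conditioning i.i.d.~samples on how many of them fall in a fixed set turns those samples into i.i.d.~draws from the corresponding conditional distribution. The bridge between this fact and the statement of the lemma, which is phrased in terms of $\hat{\mu}^{\mathcal{A}}$ and $\mu^{\mathcal{A}}$ rather than $\hat{\mu}$ and $\mu$, is supplied by Lemma \ref{lem:muAwelldefined}, which I will invoke twice.

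First, since Assumption \ref{ass:graph_struc} guarantees that $\pa(k)$ is fully connected, Lemma \ref{lem:muAwelldefined} applied to $c_{\pa(k)} \in \mathcal{A}_{\pa(k)}$ gives $\hat{\mu}^{\mathcal{A}}(c_{\pa(k)}) = \hat{\mu}(c_{\pa(k)})$, so the conditioning event $\{n\hat{\mu}^{\mathcal{A}}(c_{\pa(k)}) = m\}$ is simply $\{|I| = m\}$ with $I := \{i : X^i_{\pa(k)} \in c_{\pa(k)}\}$. The same lemma, via the identity \eqref{eq:nu.A.kernels.identity}, yields
\[
\hat{\mu}^{\mathcal{A}}(\cdot \mid c_{\pa(k)}) = \hat{\mu}(\cdot \mid c_{\pa(k)}) = \frac{1}{|I|}\sum_{i \in I} \delta_{X^i_k},
\]
and likewise $\mu^{\mathcal{A}}(\cdot \mid c_{\pa(k)}) = \mu(\cdot \mid c_{\pa(k)})$. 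After this rewriting, everything in the claim is expressed purely in terms of the original i.i.d.~samples and the original target measure.

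Next, I would invoke exchangeability of $X^1,\dots,X^n$: conditional on $|I| = m$, the random set $I$ is uniformly distributed over the $m$-subsets of $\{1,\dots,n\}$, and conditional on both $|I| = m$ and the realization of $I$, the $(X^i)_{i\in I}$ are i.i.d.~with law $\mu(\,\cdot \mid X_{\pa(k)} \in c_{\pa(k)})$. Projecting onto the $k$-th coordinate and averaging over the uniform choice of $I$ then shows that, conditional on $|I| = m$, the random measure $\tfrac{1}{m}\sum_{i\in I}\delta_{X^i_k}$ has the same law as the empirical measure of $\mu(\cdot \mid c_{\pa(k)}) = \mu^{\mathcal{A}}(\cdot \mid c_{\pa(k)})$ with sample size $m$, which is exactly the claim.

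I do not expect a genuine obstacle here. The two non-routine ingredients are (a) that one may legitimately replace $\hat{\mu}^{\mathcal{A}}$ and $\mu^{\mathcal{A}}$ by $\hat{\mu}$ and $\mu$ on the cell $c_{\pa(k)}$, which is precisely where Assumption \ref{ass:graph_struc} enters through Lemma \ref{lem:muAwelldefined}, and (b) that one conditions on the \emph{count} $|I|$ rather than on the ordered sample, which is handled by the standard exchangeability argument above. Everything else is bookkeeping.
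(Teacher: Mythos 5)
Your proof is correct and follows essentially the same route as the paper: both arguments first use Lemma \ref{lem:muAwelldefined} together with \eqref{eq:nu.A.kernels.identity} to replace $\mu^{\mathcal{A}}$ and $\hat{\mu}^{\mathcal{A}}$ by $\mu$ and $\hat{\mu}$ on the fixed cell, and then observe that conditioning an i.i.d.\ sample on the number of hits in a fixed set makes the hitting subsample i.i.d.\ from the conditional law. The only cosmetic difference is that you invoke this last step as a known fact (and attribute the uniformity of $I$ to exchangeability, while the i.i.d.\ conditional law really uses full independence of the sample), whereas the paper spells it out by summing over the $\binom{n}{m}$ subsets $J$ and factoring probabilities via independence.
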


The final ingredient in the proof of Theorem \ref{thm:estWLip}  is a standard result in probability theory: 
if $\nu\in\mathcal{P}([0,1]^{r})$ and $\hat\nu$ denotes its empirical measure with sample size $m$, then 
    \begin{align}
        \label{eq:classical.convergence.wasserstein.in.proof}
        \mathbb{E}[\W(\nu,\hat\nu)] 
        \leq 
            8 l_m(r) m^{-1/\max\{r,2\}} , 
            \qquad l_m(r) = 
        \begin{cases}
           1 & \text{if } r\neq 2,\\
            \max\{\log(n),1\}  & \text{if } r= 2,
        \end{cases}
    \end{align}
    we refer to \cite{fournier2023convergence} for a version that quantifies the multiplicative constants explicitly.


    \begin{proof}[Sketch of proof of Theorem \ref{thm:estWLip}]
    We focus on the graph $1\to 2$ and $d_1,d_2\neq 2$ for simplicity, the full proof is given in Appendix \ref{sec:proofs.W}.
Using Lemma~\ref{lem:Wdecomp}, that $\mu^{\mathcal A}(\cdot\mid y_1)$ is the average of the Lipschitz kernel
$\mu(\cdot\mid x_1)$ over $x_1\in c_1(y_1)$,  followed by the fact that $\mu^\mathcal{A}(\cdot\mid x_1)$ and  $\hat\mu^\mathcal{A}(\cdot\mid x_1)$ are constant over $x_1\in c_1\in\mathcal{A}_1$,
\begin{align*}
\W(\mu,\hat\mu^{\mathcal A})
&\leq
(1+L)\W(\mu_1,\hat\mu_1)
+
\int
\W\bigl(\mu(\cdot\mid y_1),\hat\mu^{\mathcal A}(\cdot\mid y_1)\bigr)
\,\hat\mu_1(dy_1)\\
& \leq (1+L)\W(\mu_1,\hat\mu_1)
+
L\delta_{\mathcal A}
+
\sum_{c_1\in\mathcal A_1}
\hat\mu(c_1)
\W\bigl(\mu^{\mathcal A}(\cdot\mid c_1),
\hat\mu^{\mathcal A}(\cdot\mid c_1)\bigr)\\
&
=:({\rm I}) + L \delta_\mathcal{A} +  ({\rm II})
\end{align*}
The wanted estimate on $\mathbb{E}[({\rm I})]$ follows from \eqref{eq:classical.convergence.wasserstein.in.proof}. As for the term $({\rm II}),$ by Lemma \ref{lem:conditional.iid} and \eqref{eq:classical.convergence.wasserstein.in.proof}, 
\[
\mathbb{E}\left[
\W\bigl(\mu^{\mathcal A}(\cdot\mid c_1),
\hat\mu^{\mathcal A}(\cdot\mid c_1)\bigr)
\,\middle|\,
n\hat\mu(c_1)
\right]
\leq 8(n\hat\mu(c_1))^{-1/\bar d_2}.
\]
Using the tower property and Jensen's inequality,
\[
\mathbb{E}[({\rm II})]
\leq  8  \mathbb{E}\left[ \sum_{c_1\in\mathcal{A}_1}  \hat\mu(c_1) \cdot  (n\hat\mu(c_1))^{-1/\bar d_2} \right]
\leq 8 
\left(
\frac{n}{|\mathcal A_1|}
\right)^{-1/\bar d_2}.
\]
Next note that  $\delta_\mathcal{A} \sim n^{-1/d_{\rm loc}}$ and $|\mathcal{A}_1|=2^{\eta d_1} \leq n^{d_1/d_{\rm loc}}$.
Using this, one can calculate that $(n/|\mathcal{A}_1|)^{-1/\bar d_2} \lesssim n^{-1/d_{\rm loc}}$.
\end{proof}

	\section{Sharp rates for graphical models with TV-Lipschitz kernels}\label{sec:tvlip}

    This section contains the main results of this paper, namely we introduce an estimator $\hat{\mu}^{b\mathcal{A}}$ that achieves optimal error rates for distributions of graphical models.

    We start the section by defining the estimator in Definition \ref{def:mu.ba} and giving the main results, Theorem \ref{thm:estTV} and Theorem \ref{thm:main.TV.adaptive}. Before proceeding with the proof, we compare the Wasserstein-Lipschitz-assumption \ref{ass:W.Lip} with the Total-Variation-Lipschitz-assumption \ref{ass:TV.Lip} in Section \ref{sec:W.vs.TV}. The proofs of the main results are in Sections \ref{subsec:propertiesmuba}--\ref{sec:tv.adaptive}.

To define the estimator,
recall that $\nu_{k}$ is the $k$-th marginal of $\nu$ and that for $c_k\subset \X_k$, $\nu_{k|c_k}(A_k) = \frac{\nu_k(A_k \cap c_k)}{\nu_k(c_k)}$ for Borel sets $A_k \subseteq \X_k$ if $\nu_k(c_k) > 0$ (if $\nu_k(c_k)=0$, we arbitrarily set $\nu_{k|c_k}=\delta_m$ where $m$ is the midpoint of the cell $c_k$).
Further,  $\mathcal{A}$ is the partition of $\X$ into cubes of side-length $\delta_\mathcal{A}=2^{-\eta}$.

Similarly to Definition \ref{def:mu.A}, define a smoothing-operation $\mathcal{P}(\X)\ni \nu\mapsto\nu^{b\mathcal{A}}$:

\begin{definition}
\label{def:mu.ba}
    For every $\nu\in\mathcal{P}(\X)$ and $k=1,\dots,K$, set
    \[
	\nu^{b\mathcal{A}}(dx_k \mid x_{\pa(k)}) 
    := \sum_{c_k \in \mathcal{A}_k} \nu\left(c_k \mid c_{\pa(k)}(x_{\pa(k)}) \right) \cdot \nu_{k | c_k}(dx_k)
	\]
    and define $\nu^{b\mathcal{A}}(dx) = \prod_{k=1}^K \nu^{b\mathcal{A}}(dx_k \mid x_{\pa(k)})$.
    Moreover, define $
        \hat{\mu}^{b\mathcal{A}} := \left( \hat{\mu}\right)^{b\mathcal{A}}$,  that is, $\hat{\mu}^{b\mathcal{A}}$ is obtained by applying the operation $\nu\mapsto \nu^{b\mathcal{A}}$ to the empirical measure $\nu=\hat{\mu}$.
\end{definition}

Under Assumption \ref{ass:graph_struc} and similarly to Lemma \ref{lem:muAwelldefined}, $\nu^{b\mathcal{A}}$ is indeed well-defined (i.e., it is the same for all representatives of the disintegration of $\nu$), which follows from Lemma \ref{lem:mubAwelldefined} below. An algorithmic description for $\hat\mu^{b\mathcal{A}}$ is given in Algorithm \ref{alg:hatmubA} and a visualization in the case of the simple graph $1\rightarrow 2$ is given in Figure \ref{fig:mu.bA}.

\begin{algorithm}[t]
\caption{Construction of $\hat\mu^{b\mathcal{A}}$ and sampling from it}
\label{alg:hatmubA}
\begin{algorithmic}[1]
\Statex \textbf{Input:} DAG $G$ on $\{1,\dots,K\}$ (topologically sorted); samples $X^1,\dots,X^n\in\X$; resolution $\eta\in\mathbb{N}$.
\Statex \textbf{Output:} cell-level kernels $(\hat{\mathcal{K}}_k)_{k=1}^K$, points for cell-marginals $(V_k)_{k=1}^K$, and a procedure \textsc{Sample}() returning a draw from $\hat\mu^{b\mathcal{A}}$.
\Statex For $S\subseteq\{1,\dots,K\}$, let $\mathrm{ind}_S(x_S):=\bigl(\lfloor 2^\eta x_\ell\rfloor\wedge(2^\eta-1)\bigr)_{\ell\in S}$ index the cell of $\mathcal{A}_S$ containing $x_S$.
\For{$k=1,\dots,K$}
   \State initialise $V_k$ as an empty dictionary with default $()$
   \For{$i=1,\dots,n$} append $X^i_k$ to $V_k[\mathrm{ind}_k(X^i_k)]$
   \EndFor
   \If{$\pa(k)=\emptyset$} $\hat{\mathcal{K}}_k \gets \tfrac{1}{n}\sum_{i=1}^n \delta_{\mathrm{ind}_k(X^i_k)}$
   \Else
      \State initialise $\hat{\mathcal{K}}_k$ as an empty dictionary with default $()$
      \For{$i=1,\dots,n$} append $\mathrm{ind}_k(X^i_k)$ to $\hat{\mathcal{K}}_k[\mathrm{ind}_{\pa(k)}(X^i_{\pa(k)})]$
      \EndFor
      \ForAll{$j$ with $\hat{\mathcal{K}}_k[j]\neq()$} $\hat{\mathcal{K}}_k[j] \gets \tfrac{1}{|\hat{\mathcal{K}}_k[j]|}\sum_{j_k\in\hat{\mathcal{K}}_k[j]}\delta_{j_k}$ \Comment{list structure, allows duplicates}
      \EndFor
   \EndIf
\EndFor
\Procedure{Sample}{\,}
   \For{$k=1,\dots,K$}
      \If{$\pa(k)=\emptyset$} draw $j_k\sim\hat{\mathcal{K}}_k$
      \Else\ draw $j_k\sim\hat{\mathcal{K}}_k[\mathrm{ind}_{\pa(k)}(X_{\pa(k)})]$ \Comment{$\hat{\mathcal{K}}_k[\mathrm{ind}_{\pa(k)}(X_{\pa(k)})] \neq ()$ by Ass.~\ref{ass:graph_struc} \& Lem.~\ref{lem:mubAwelldefined}}
      \EndIf
      \State draw $X_k$ uniformly from $V_k[j_k]$ \Comment{i.e., $X_k\sim\hat\mu_k|_{c_k}$ for $c_k$ with index $j_k$}
   \EndFor
   \State \Return $(X_1,\dots,X_K)$
\EndProcedure
\end{algorithmic}
\end{algorithm}

At this point, it perhaps makes sense to intuitively clarify the concept behind $\nu^{b\mathcal{A}}$ for the case $K=2$ and the graph  $1 \rightarrow 2$, in which case
\begin{align}
\label{eq:mu.as.local.prod}
\nu^{b\mathcal{A}} 
    = \sum_{c_1\in\mathcal{A}_1, c_2\in\mathcal{A}_2} \nu(c_1\times c_2) \left( \nu_{1|c_1}\otimes \nu_{2|c_2}\right). 
\end{align}
Thus, $\nu^{b\mathcal{A}}$ is locally a product measure, which on an intuitive level introduces additional smoothness (compared to $\nu^\mathcal{A}$).

\begin{figure}
    \centering
    \includegraphics[width=0.9\linewidth]{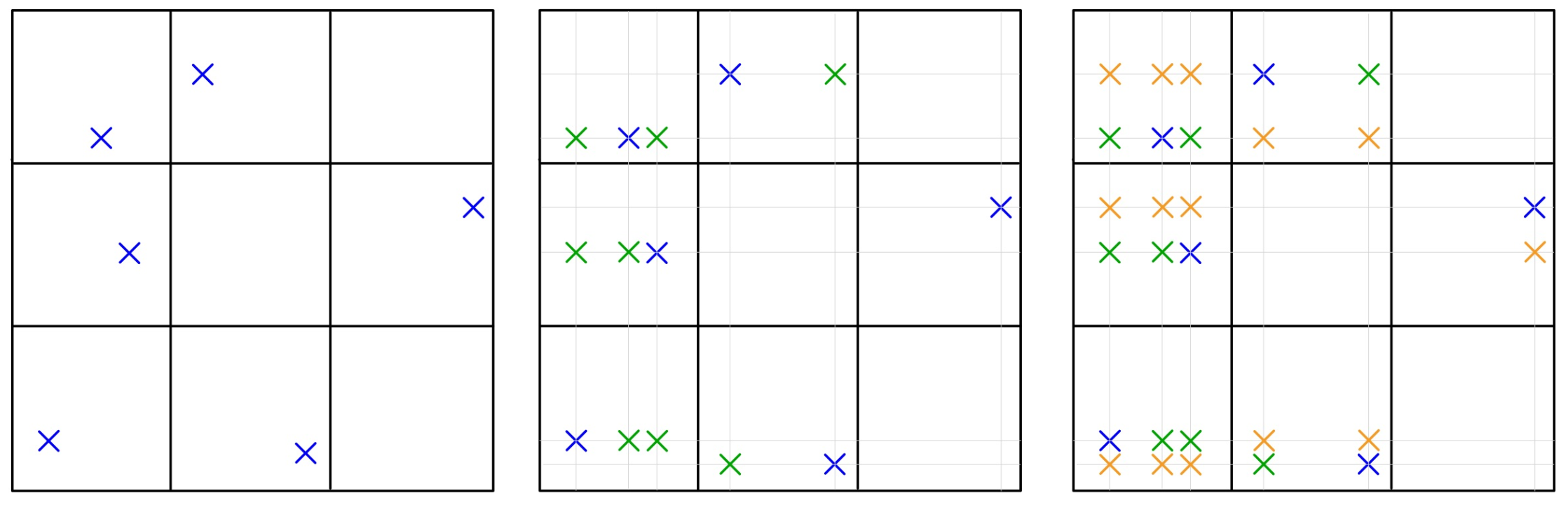}
    \caption{
 Visualization of the estimators $\hat\mu$ (left), $\hat\mu^\mathcal{A}$ (middle), and $\hat \mu^{b\mathcal{A}}$ (right) for the simple graph $1 \rightarrow 2$ with $\X_1 = \X_2 = [0, 1]$ with a partition of each interval into three subsets ($\X_1$ is on the horizontal axis) and sample size $n=6$.
    Blue crosses are the initial data points, green crosses are the new data points which are added in the construction of $\hat\mu^\mathcal{A}$ by making the kernels constant in the direction from first to second coordinate, and orange are the new points which are added in the construction of $\hat\mu^{b\mathcal{A}}$ by further making the kernels constant in the direction from second to first coordinate. Note that on the right, we have product measures locally on each cube.}
    \label{fig:mu.bA}
\end{figure}

We are now ready to state the main result of this paper: For this, we  recall $\bar{d}_k := \max\{2, d_k\}$, $d_{\rm max} = \max_{k=1, \dots, K} \bar{d}_k$, and $d_{\rm loc} = \max_{k=1, \dots, K} (\bar{d}_k + d_{\pa(k)})$.

\begin{theorem}\label{thm:estTV}
	Let $\mu \in \mathcal{P}_G(\X)$ and suppose that Assumptions \ref{ass:graph_struc} and \ref{ass:TV.Lip} hold. Set  $\eta=\lfloor \frac{\log_2(n)}{2+d_{\rm loc}}\rfloor$.
    Then,
	\[
	\mathbb{E}\left[\W(\mu, \hat{\mu}^{b\mathcal{A}})\right] 
    \leq C\cdot l_n \cdot  \left( n^{-2/(2+d_{\rm loc})} +  n^{-1/ d_{\rm max}} \right) ,
	\]
    where $C$ is a constant only depending on $L$, $G$, and $(d_k)_{k=1}^K$, and where
    \[
    l_n = \begin{cases}
        \max\{1, \log(n)\} & \text{if $n^{-2/(2+d_{\rm loc})} \geq n^{-1/ d_{\rm max}}$ and $d_{\rm loc}$ is attained at $d_k=2$,}\\
        \max\{1, \log(n)\} & \text{if $n^{-2/(2+d_{\rm loc})} \leq n^{-1/ d_{\rm max}}$ and $\max_{k=1, \dots, K} d_k = 2$,} \\
        1 & \text{otherwise.}
    \end{cases}
    \]
\end{theorem}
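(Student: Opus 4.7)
The plan is a bias--variance decomposition
\[ \mathbb{E}\bigl[\W(\mu,\hat{\mu}^{b\mathcal{A}})\bigr] \leq \W(\mu,\mu^{b\mathcal{A}}) + \mathbb{E}\bigl[\W(\mu^{b\mathcal{A}},\hat{\mu}^{b\mathcal{A}})\bigr], \]
with the two contributions to be balanced through $\delta=2^{-\eta}\asymp n^{-1/(2+d_{\rm loc})}$. My target estimates are a bias of order $\delta^{2}$ and a variance of order $\delta\sqrt{|\mathcal{A}|/n}+l_{n}n^{-1/d_{\max}}$, with $|\mathcal{A}|=2^{\eta d_{\rm loc}}$. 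At the stated choice of $\eta$ one verifies $\delta^{2}=\delta\sqrt{|\mathcal{A}|/n}=n^{-2/(2+d_{\rm loc})}$, which yields the theorem.

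\textbf{Bias.} First I would establish, in analogy with Lemma~\ref{lem:muAwelldefined} and by induction along the topological ordering of $G$ using Assumption~\ref{ass:graph_struc}, that $\mu^{b\mathcal{A}}(c)=\mu(c)$ for every cell $c\in\mathcal{A}$ and that the within-cell restriction factorises, $\mu^{b\mathcal{A}}|_{c}=\bigotimes_{k=1}^{K}\mu_{k|c_{k}}$. Both $\mu|_{c}$ and $\mu^{b\mathcal{A}}|_{c}$ are then probability measures on a cube of diameter $\delta$, so any coupling restricted to $c\times c$ costs at most $\delta$ and
\[ \W(\mu,\mu^{b\mathcal{A}})\leq \sum_{c\in\mathcal{A}}\mu(c)\W(\mu|_{c},\mu^{b\mathcal{A}}|_{c})\leq \delta\sum_{c\in\mathcal{A}}\mu(c)\TV(\mu|_{c},\mu^{b\mathcal{A}}|_{c})=\delta\cdot\TV(\mu,\mu^{b\mathcal{A}}). \]
It remains to show $\TV(\mu,\mu^{b\mathcal{A}})\leq C\delta$. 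For this I would use Lemma~\ref{lem:mu.ba.euals.swap} to represent $\mu^{b\mathcal{A}}$ as a forward-then-backward $\mathcal{A}$-smoothing of $\mu$ and apply a total-variation chain rule along $G$: each smoothing step changes the relevant kernel by at most $L\delta$ in TV by the corresponding (forward or backward) part of Assumption~\ref{ass:TV.Lip}, and summing over nodes produces the claim.

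\textbf{Variance.} Writing $\mu^{b\mathcal{A}}=\sum_{c}\mu(c)\bigotimes_{k}\mu_{k|c_{k}}$ and $\hat{\mu}^{b\mathcal{A}}=\sum_{c}\hat{\mu}(c)\bigotimes_{k}\hat{\mu}_{k|c_{k}}$, I introduce the intermediate mixture $\nu:=\sum_{c}\mu(c)\bigotimes_{k}\hat{\mu}_{k|c_{k}}$. The within-cell piece $\mathbb{E}[\W(\mu^{b\mathcal{A}},\nu)]$ is controlled by convexity of $\W$ together with $\W(\bigotimes_{k}\mu_{k|c_{k}},\bigotimes_{k}\hat{\mu}_{k|c_{k}})\leq\sum_{k}\W(\mu_{k|c_{k}},\hat{\mu}_{k|c_{k}})$; Lemma~\ref{lem:conditional.iid} applied cell-wise to the $k$-marginals on $\X_{k}$, the empirical rate \eqref{eq:classical.convergence.wasserstein.in.proof}, and a Jensen step as in Step~2 of the proof of Theorem~\ref{thm:estWLip} produce the $l_{n}n^{-1/d_{\max}}$ contribution. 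The cell-probability piece $\mathbb{E}[\W(\nu,\hat{\mu}^{b\mathcal{A}})]$ compares two mixtures with identical within-cell components---each supported in a $\delta$-cube---but different weights $\mu(c)$ versus $\hat{\mu}(c)$; a lattice-transport argument exploiting Assumption~\ref{ass:graph_struc} (so the cell weights are effectively determined by a $d_{\rm loc}$-dimensional marginal of $\mu$), combined with the concentration $\mathbb{E}|\hat{\mu}(c)-\mu(c)|\leq\sqrt{\mu(c)/n}$ and Cauchy--Schwarz, contributes the $\delta\sqrt{|\mathcal{A}|/n}$ term.

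The main obstacle is the quadratic bias $\delta^{2}$, which rests on two ingredients absent from Section~\ref{sec:wlip}. First, the identity $\mu^{b\mathcal{A}}(c)=\mu(c)$---a graph-combinatorial statement requiring Assumption~\ref{ass:graph_struc}---ensures that no mass has to be transported \emph{between} cells. Second, the TV bound $\TV(\mu,\mu^{b\mathcal{A}})=O(\delta)$ uses the full strength of Assumption~\ref{ass:TV.Lip}: the forward TV-Lipschitz bound controls the forward smoothing and the backward TV-Lipschitz bound controls the backward one. It is the combination of the two that brings the bias down from $\delta$ (as in Theorem~\ref{thm:estWLip}) to $\delta^{2}$ in Wasserstein, which is the improvement responsible for the sharp rate $n^{-2/(2+d_{\rm loc})}$ of the theorem.
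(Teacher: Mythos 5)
The proposal follows the paper's overall shape (triangle inequality, $\delta^2$ bias, variance split into within-cell marginal fluctuations plus a cell-weight piece, then balance $\delta\asymp n^{-1/(2+d_{\rm loc})}$), and the within-cell factorisation $\mu^{b\mathcal{A}}|_c=\bigotimes_k\mu_{k|c_k}$ is correct. However, there is a genuine gap at the centre of your bias argument. You claim that ``by induction along the topological ordering, $\mu^{b\mathcal{A}}(c)=\mu(c)$ for every cell $c\in\mathcal{A}$.'' This is false unless the entire node set is fully connected. By definition, $\mu^{b\mathcal{A}}(c)=\prod_{k}\mu(c_k\mid c_{\pa(k)})$, whereas $\mu(c)=\prod_k\mu(c_k\mid c_{1:k-1})$; these products disagree as soon as some $\pa(k)\subsetneq\{1,\dots,k-1\}$, since discretising a graphical model to cells does not preserve the conditional-independence structure. (Concretely, for $1\to2\to3$ one has $\mu^{b\mathcal{A}}(c_{1:3})=\mu(c_1)\mu(c_2\mid c_1)\mu(c_3\mid c_2)$ while $\mu(c_{1:3})=\mu(c_1)\mu(c_2\mid c_1)\mu(c_3\mid c_1,c_2)$, and these differ in general.) The analogue of Lemma~\ref{lem:muAwelldefined} yields $\mu^{b\mathcal{A}}(c_I)=\mu(c_I)$ only for \emph{fully connected} $I$; it does not extend to arbitrary cells.

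Because of this, your inequality $\W(\mu,\mu^{b\mathcal{A}})\le\sum_c\mu(c)\W(\mu|_c,\mu^{b\mathcal{A}}|_c)$ is not available—the two measures do not put equal mass on each cell, so a cell-wise coupling is not a coupling of $\mu$ and $\mu^{b\mathcal{A}}$ and some mass must be moved between cells. The paper closes exactly this gap with Lemma~\ref{lem:mu.bA.mu.almost.same.cell}, a non-trivial induction producing a signed measure $\tilde\mu$ with $\|\tilde\mu\|_{\TV}\le C\delta^2$ such that $\mu(c)=\mu^{b\mathcal{A}}(c)+\tilde\mu(c)$; the proof of Lemma~\ref{lem:TVboundbias} then combines this cell-weight correction (contributing $O(\delta^2)$ since $|f(m_c)|\le1$) with the within-cell part controlled by $\TV(\mu,\mu^{b\mathcal{A}})=O(\delta)$ via Kantorovich--Rubinstein duality. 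Without Lemma~\ref{lem:mu.bA.mu.almost.same.cell} the quadratic bias cannot be obtained; this is precisely the ``first ingredient absent from Section~\ref{sec:wlip}'' you invoke but do not actually supply. The same false identity contaminates your variance step: $\hat\mu^{b\mathcal{A}}$ is \emph{not} $\sum_c\hat\mu(c)\bigotimes_k\hat\mu_{k|c_k}$—its cell weights are $\prod_k\hat\mu(c_k\mid c_{\pa(k)})$, i.e.\ those of the midpoint graphical model $\hat\mu^{\mathcal{M}}$. The paper's Lemmas~\ref{lem:average.on.cells}, \ref{lem:W.mu.M.hat.mu.M} and \ref{lem:E.sup.fmu.minus.fhatmu} encode the correct version of your intermediate-measure split; the ``lattice-transport argument exploiting Assumption~\ref{ass:graph_struc}'' that you gesture at is really the content of Lemma~\ref{lem:W.mu.M.hat.mu.M}, which applies the Wasserstein recursion of Lemma~\ref{lem:Wdecomp} to $\mu^{\mathcal{M}}$ after verifying (Lemma~\ref{lem:mu.M.kernels.Lipschitz}) that the discretised kernels remain Wasserstein-Lipschitz.
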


Note that if $d_k\geq 3$ for all $k$, then $l_n=1$.

Finally, just as in Section \ref{sec:wlip}, one may define an estimator that does not require knowledge of $G$ (the proof is given in Section \ref{sec:tv.adaptive}):

\begin{theorem}
\label{thm:main.TV.adaptive}
There exists an estimator $\hat\mu^{b\mathcal{A},ad}$, depending only on the data and on $(d_k)_{k=1}^K$, such that for every $G\in\mathcal{G}$ and every $\mu \in \mathcal{P}_G$ satisfying Assumption \ref{ass:TV.Lip} with constant $L$,
\begin{align*}
    \E{\W(\mu, \hat\mu^{b\mathcal{A},ad})} \;\leq\; C \cdot \max\{1,\log(n)\} \cdot \brak{n^{-2/(2+d_{\rm loc}(G))} + n^{-1/d_{\rm max}}},
\end{align*}
where $C$ is a constant depending only on $L$, $K$, and $(d_k)_{k=1}^K$.
\end{theorem}

\subsection{Comments on Assumption \ref{ass:W.Lip} and Assumption \ref{ass:TV.Lip}}
\label{sec:W.vs.TV}

We briefly illustrate the difference between Assumptions~\ref{ass:W.Lip} and~\ref{ass:TV.Lip}. 
The Wasserstein-Lipschitz condition is insensitive to singular changes of support, as long as these supports move in a Lipschitz way. 
By contrast, total variation is sensitive to singularity: two Dirac masses, or more generally two measures supported on disjoint lower-dimensional sets, have total variation distance one even if their supports are arbitrarily close.

\begin{example}
Assume that there is some Polish space $\mathcal{Z}$ and for every $k=1,\dots, K$,  there is a function $f_k\colon \X_{\pa(k)}\times \mathcal{Z}\to \X_k$ such that
\[
    X_k := f_k(X_{\pa(k)},Z_k) 
\]
where $Z_k$ is a $\mathcal{Z}$-valued random vector that is independent of $X_{\pa(k)}$.
If $f_k$ is $L$-Lipschitz in its first argument (that is $\|f_k(x_{\pa(k)},z)-f_k(\tilde x_{\pa(k)},z)\|\leq L \|x_{\pa(k)} - \tilde x_{\pa(k)} \|$ for every $z\in\mathcal{Z}$), then 
\begin{align*}
    \W\bigl(\mu(dx_k\mid x_{\pa(k)}),\mu(dx_k\mid \tilde x_{\pa(k)})\bigr)
    &\leq 
    \E{  \|f_k(x_{\pa(k)}, Z_k)-f_k(\tilde x_{\pa(k)}, Z_k)\|}\\
    &\leq
    L\|x_{\pa(k)}-\tilde x_{\pa(k)}\|.
\end{align*}
Thus the kernel is Wasserstein-Lipschitz, i.e.\ Assumption \ref{ass:W.Lip} is satisfied.

However, the kernels need not be Lipschitz in total variation.  For example, take $K=2$ and 
\(d_1=d_2=1\), \(f(x,z)=\frac{1}{2}x+z\), and
$Z$ uniform on $\{0,\frac{1}{2}\}$.  
Then, for
\(x_1\neq \tilde x_1\) sufficiently close, the two conditional laws have 
disjoint finite supports, and hence
\[
    \TV\bigl(\mu(dx_2\mid x_1),\mu(dx_2\mid \tilde x_1)\bigr)=1.
\]
Thus Assumption \ref{ass:TV.Lip} is not satisfied.
\end{example}

The example suggests that Assumption \ref{ass:W.Lip} is not that restrictive, and satisfied by various (both discrete and continuous) models.
At the same time, Assumption \ref{ass:TV.Lip} seems more delicate, and the reader might wonder about connections between this assumption and classical assumptions in density estimation, namely for $\mu$ to have a  strictly positive  density w.r.t.\ the Lebesgue measure that is Lipschitz continuous (or has higher order derivatives).
Before we explain those connections, let us present a characterization of Assumption \ref{ass:TV.Lip}; its proof is deferred to Section \ref{sec:remaining.proofs.TV}. 

\begin{lemma}
\label{lem:char.TV}
Let \(\mu\in\mathcal P(\X)\), and let
\(I,J\subseteq \{1,\dots,K\}\) be disjoint and non-empty and denote by
\(\mu_I,\mu_J,\mu_{I,J}\) the corresponding marginals of $\mu$.
The following are equivalent:
\begin{enumerate}
    \item[(i)] There exists a version of the conditional kernel
    \[
        x_J\mapsto \mu(dx_I\mid x_J)
    \]
    which is \(L\)-Lipschitz with respect to total variation.

    \item[(ii)] \(\mu_{I,J}\ll \mu_I\otimes\mu_J\), and the
    Radon--Nikodym derivative
    $
        f_{I,J}:=\frac{d\mu_{I,J}}{d(\mu_I\otimes\mu_J)}
    $
    admits a version such that
    \[
        x_J\mapsto f_{I,J}(\cdot,x_J)\in L^1(\mu_I)
    \]
    is \(2L\)-Lipschitz.
\end{enumerate}
\end{lemma}

\begin{corollary}
\label{cor:block.char.TV.graph}
Let \(\mu\in\mathcal P_G(\X)\). Suppose that for every \(k=2,\ldots,K\),
the conditions in Lemma~\ref{lem:char.TV} hold with constant \(L\)
for the ordered pairs
\begin{align}
    \label{eq:pairs.i.j}
    (I,J)=(\{k\},\pa(k)),\qquad
    (I,J)=(\anc(k),\pa(k)),\qquad
    (I,J)=(\pa(k),\{k\}),
\end{align}
whenever both \(I\) and \(J\) are non-empty. 
Then \(\mu\) satisfies Assumption~\ref{ass:TV.Lip} with constant $L$.

In particular, for $K=2$ and the simple graph $1\to 2$,  Assumption~\ref{ass:TV.Lip} is equivalent to: \(\mu\ll \mu_1\otimes\mu_2\), and the Radon--Nikodym
    derivative
    $
    f:=\frac{d\mu}{d(\mu_1\otimes\mu_2)}
    $
    admits  versions such that
    \[
         x_1\mapsto f(x_1,\cdot)\in L^1(\mu_2),
        \qquad
        x_2\mapsto f(\cdot,x_2)\in L^1(\mu_1)
    \]
     are \(2L\)-Lipschitz.
\end{corollary}

In particular, the transition kernels are required to be Lipschitz only in an integrated (rather than pointwise) sense, and the marginals are completely arbitrary (and in particular can have atoms or arbitrarily irregular densities).
A  consequence of Lemma \ref{lem:char.TV} is the following result (its proof is presented in Section \ref{sec:remaining.proofs.TV}):

\begin{corollary}
\label{cor:leb-density-tvlip}
Let \(I,J\subseteq \{1,\dots,K\}\) be disjoint and non-empty.
Assume that \(\mu_{I,J}\) has a Lebesgue density \(f_{I,J} =\frac{d\mu_{I,J}}{dx_{I,J}}\) and set $f_I =\frac{d\mu_{I}}{dx_{I}}$ and $f_J =\frac{d\mu_{J}}{dx_{J}}$.
If $f_J\geq a$ and $ \X_J\ni x_J\mapsto f_{I,J}(\cdot, x_J) \in L^1(x_I)$  is $M$-Lipschitz, then
$    x_J\mapsto \mu(dx_I\mid x_J)
$
is \(M/a\)-Lipschitz with respect to total variation.

Consequently, if the above condition holds for all non-empty pairs $I,J$ as in \eqref{eq:pairs.i.j}, then Assumption~\ref{ass:TV.Lip} holds with constant \(M/a\).
\end{corollary}

\begin{remark}
\label{rem:density.estim}
    Corollary \ref{cor:leb-density-tvlip} shows that Assumption \ref{ass:TV.Lip} is  weaker than the classical conditions used in density estimation (that the Lebesgue density is strictly positive and suitably smooth).
For instance,  let $K=2$, set $G$ to be the simple graph $1\to 2$, and put $d_1=d_2=2$.
Let $\mu$ have the Lebesgue density $f(x_1,x_2) = \alpha \eins_{x_1, x_2 \in [0, 1], |x_1-x_2| \leq 0.1}$. 
Then the conditions in Corollary \ref{cor:leb-density-tvlip} are satisfied, but $f$ is clearly neither strictly positive everywhere, nor pointwise Lipschitz continuous.

Moreover, it should be stressed that Assumption \ref{ass:TV.Lip} is intrinsically different to the assumptions imposed in the literature on factorizable density estimation (see, e.g., \cite{fan2025optimal,kwon2026nonparametric}).
This can most easily be seen in the trivial case \(K=1\), in which neither Assumption \ref{ass:W.Lip} nor Assumption \ref{ass:TV.Lip} impose any conditions on $\mu$.
Here our estimator recovers the standard non-parametric Wasserstein rate (e.g.\ \(n^{-1/d_1}\) for $d_1\geq 3$)  without imposing any regularity assumption on the measure.
In contrast, this setting falls outside the scope of density estimation theory.
\end{remark}

\subsection{Properties of $\mu^{b\mathcal{A}}$}\label{subsec:propertiesmuba}

\begin{remark}
\label{rem:sec23}
Before we proceed with the proof of Theorem \ref{thm:estTV}, let us explain the general strategy and compare the differences to the proof of Theorem \ref{thm:estWLip} (in particular, the strategy in Section \ref{sec:tvlip} deviates completely from prior work like \cite{backhoff2022estimating}).

In the approach of the proof for Theorem \ref{thm:estWLip}, the \emph{bias} introduced by the estimator $\hat\mu^{\mathcal{A}}$ enters on the level of conditional distributions, and is of order $\delta_{\mathcal{A}}$. In contrast, for the improved rate in Theorem \ref{thm:estTV}, we require a bias of order $\delta_{\mathcal{A}}^2$, which necessitates the assumption of Lipschitz continuity with respect to total variation instead of Wasserstein distance. Further, even under this stronger continuity assumption, a bias of order $\delta_{\mathcal{A}}^2$ cannot be achieved on the fine-grained scale of conditional distributions, hence requiring a bias analysis on the level of joint distributions. The key reason this does not work on the level of conditional distributions is that while our estimators only perturb mass locally on each cell (cf.~Lemma~\ref{lem:mubAwelldefined}), this is \emph{not} true for ``conditional cells''. This property of local perturbation is crucial for the rough approach ``$\W(\mu, \mu^{b\mathcal{A}})\lesssim \TV^2(\mu, \mu^{b\mathcal{A}})$'' leading to bias of order $\delta_{\mathcal{A}}^2$ in Lemma~\ref{lem:TVboundbias}, which doesn't work with the simple bias strategy on the level of conditional distributions. While controlling the bias in this fashion would still work for $\hat\mu^{\mathcal{A}}$, the strategy to bound the variance must be adjusted without this fine-grained bias control, and the new strategy no longer works using $\hat\mu^{\mathcal{A}}$, hence requiring the new estimator $\hat\mu^{b\mathcal{A}}$. The way the variance is controlled for this estimator is to project to the grid of cell midpoints, and this transition leads to the error of order $n^{-1/d_{\rm max}}$ in Theorem \ref{thm:estTV} and necessitates the local-product-structure given by $\hat\mu^{b\mathcal{A}}$. This transition requires a control of kernels forward and backward along the graph, which explains for instance why kernels of the form $x_{\pa(k)} \mapsto \mu(dx_{\anc(k)} \mid x_{\pa(k)})$ are important in Assumption \ref{ass:TV.Lip}. Once the transition to discrete measures is achieved, the remaining variance of the discrete measures can again be controlled along the topological order of the graph, similarly to Section 2; see Section~\ref{subsec:fullydiscrete} and Lemmas~\ref{lem:W.mu.M.hat.mu.M} and~\ref{lem:E.sup.fmu.minus.fhatmu}.
\end{remark}

\begin{lemma}\label{lem:mubAwelldefined}
    Let $\mu\in\mathcal{P}(\X)$, $2\leq k \leq K$ and $c_{\pa(k)}\in\mathcal{A}_{\pa(k)}$. 
    Then, for all $c_k\in \mathcal{A}_k$,
    \[ \mu^{b\mathcal{A}}(c_k \mid c_{\pa(k)}) 
    = \mu(c_k \mid c_{\pa(k)}).\]
    In particular, under Assumption \ref{ass:graph_struc}, for any fully connected part of the graph $I \subseteq \{1, \dots, K\}$ and any cell $c_I \in \mathcal{A}_I$, we have $\mu^{b\mathcal{A}}(c_I) = \mu(c_I)$. 
    \begin{proof}
    By definition of $\mu^{b\mathcal{A}}$,
	\begin{align*}
	    \mu^{b\mathcal{A}}(c_k \mid c_{\pa(k)}) 
        =\sum_{\tilde{c}_k \in \mathcal{A}_k} \mu( \tilde{c}_k \mid c_{\pa(k)}) \cdot \mu_{k | \tilde{c}_k}(c_k)
        =\mu( c_k \mid c_{\pa(k)}) 
	\end{align*}
	since $\mu_{k | \tilde{c}_k}(c_k)$ is equal to 1 if $c_k=\tilde{c}_k$ and zero otherwise.

    The second part of the claim works inductively by showing the claim for sets $I \subseteq \{1, \dots, k\}$ for increasing $k$. For $k=1$ the statement is clearly true. Regarding the induction step from $k-1$ to $k$, we only need to show the claim for all fully connected parts $I \subseteq \{1, \dots, k\}$ with $k \in I$. For $J = \pa(k) \cup \{k\}$, by the assumption that $\pa(k)$ is fully connected, this follows by the above since
    \begin{align*}
    \mu^{b\mathcal{A}}(c_J) &= \mu^{b\mathcal{A}}(c_k \times c_{\pa(k)})\\ &= \mu^{b\mathcal{A}}(c_{\pa(k)}) \mu^{b\mathcal{A}}(c_k \mid c_{\pa(k)}) = \mu(c_{\pa(k)}) \mu(c_k \mid c_{\pa(k)}) = \mu(c_J).
    \end{align*}
    For any other $I \subseteq \{1, \dots, k\}$ which is fully connected and with $k \in I$, we clearly have $I \subseteq J$ (otherwise an edge to $k$ must be missing), and hence
    \[
    \mu^{b\mathcal{A}}(c_I) = \sum_{c_{J \setminus I}} \mu^{b\mathcal{A}}(c_I \times c_{J \setminus I}) = \sum_{c_{J \setminus I}} \mu(c_I \times c_{J \setminus I}) = \mu(c_I).
   \qedhere \]
    \end{proof}
\end{lemma}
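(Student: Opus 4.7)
The plan is to prove both assertions essentially by direct computation, since the construction of $\mu^{b\mathcal{A}}$ is given in terms of the measures of cells rather than the disintegration at individual points, which avoids the almost-sure subtleties that appeared in Lemma \ref{lem:muAwelldefined}.

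For the first claim, I would simply unfold the definition of $\mu^{b\mathcal{A}}(c_k\mid c_{\pa(k)})$. The sum $\sum_{\tilde c_k\in\mathcal{A}_k} \mu(\tilde c_k\mid c_{\pa(k)})\,\mu_{k\mid \tilde c_k}(c_k)$ collapses to a single term because the restriction $\mu_{k\mid\tilde c_k}$ is supported in $\tilde c_k$ (with the convention $\mu_{k\mid\tilde c_k}=\delta_m$ for the midpoint $m\in\tilde c_k$ when $\mu_k(\tilde c_k)=0$), so $\mu_{k\mid \tilde c_k}(c_k)=\eins_{\{\tilde c_k=c_k\}}$. This yields $\mu^{b\mathcal{A}}(c_k\mid c_{\pa(k)})=\mu(c_k\mid c_{\pa(k)})$ immediately.

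For the second claim, I would proceed by induction on $k$, mirroring the structure of the proof of Lemma \ref{lem:muAwelldefined}: show that for every $k$ and every fully connected $I\subseteq\{1,\dots,k\}$, one has $\mu^{b\mathcal{A}}(c_I)=\mu(c_I)$. The case $k=1$ is trivial from the definition since the first marginal of $\mu^{b\mathcal{A}}$ is $\sum_{c_1}\mu(c_1)\mu_{1\mid c_1}=\mu_1$. For the inductive step, the only nontrivial case is $k\in I$. Here the key observation, using topological sorting together with Assumption \ref{ass:graph_struc}, is that $I\setminus\{k\}\subseteq\pa(k)$, so $I\subseteq J:=\pa(k)\cup\{k\}$ and $J$ is itself fully connected. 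Then I would compute
\[
\mu^{b\mathcal{A}}(c_J)=\mu^{b\mathcal{A}}(c_{\pa(k)})\,\mu^{b\mathcal{A}}(c_k\mid c_{\pa(k)})=\mu(c_{\pa(k)})\,\mu(c_k\mid c_{\pa(k)})=\mu(c_J),
\]
where the first factor uses the induction hypothesis applied to the fully connected set $\pa(k)\subseteq\{1,\dots,k-1\}$, and the second uses the first part of the lemma. To deduce the claim for $I\subsetneq J$, I would marginalize: $\mu^{b\mathcal{A}}(c_I)=\sum_{c_{J\setminus I}\in\mathcal{A}_{J\setminus I}}\mu^{b\mathcal{A}}(c_I\times c_{J\setminus I})=\sum_{c_{J\setminus I}}\mu(c_I\times c_{J\setminus I})=\mu(c_I)$.

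The only real subtlety is making sure that the crucial inclusion $I\setminus\{k\}\subseteq\pa(k)$ indeed follows from full-connectedness combined with Assumption \ref{ass:graph_struc}: every $\ell\in I$ with $\ell<k$ must have an edge to $k$ by full-connectedness, hence $\ell\in\pa(k)$; this is exactly where the no-collider assumption enters, because without it one could not guarantee that $\pa(k)$ itself is fully connected when iterating the induction. I do not expect any genuine obstacle beyond this bookkeeping point, as no measure-theoretic regularity beyond finite additivity is required.
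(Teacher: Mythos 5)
Your proposal is correct and follows essentially the same route as the paper's proof: a direct computation for the first claim (the sum collapses because $\mu_{k\mid\tilde c_k}$ is supported in $\tilde c_k$), and for the second claim induction on $k$, first handling $J=\pa(k)\cup\{k\}$ by combining the first claim with the induction hypothesis for $\pa(k)$, then marginalizing to general fully connected $I\ni k$. One small clarification to your last paragraph: the inclusion $I\setminus\{k\}\subseteq\pa(k)$ follows from full-connectedness of $I$ alone (any $\ell<k$ in $I$ has an edge to $k$, so $\ell\in\pa(k)$); Assumption \ref{ass:graph_struc} is needed separately, namely so that $\pa(k)$ itself is fully connected and the induction hypothesis applies to $c_{\pa(k)}$ --- which you do state correctly afterward, just framed slightly confusingly at first.
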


There is a subtle difference between $\mu^{\mathcal{A}}$ and $\mu^{b\mathcal{A}}$ (cf.~Lemma \ref{lem:muAwelldefined} compared to Lemma \ref{lem:mubAwelldefined}). For the former, we had $\mu^{\mathcal{A}}(dx_k \mid c_{\pa(k)}) = \mu(dx_k \mid c_{\pa(k)})$, while for the latter the equality holds only when restricted to cells in $\X_k$.
This has several consequences---for instance, Lemma \ref{lem:conditional.iid} no longer applies in this section and we need to suitably work around it, which is one of the objectives in Subsection \ref{subsec:fullydiscrete} below.

    The following clarifies the relation between $\mu^{\mathcal{A}}$ and $\mu^{b\mathcal{A}}$, which shows that the latter arises from a twofold (i.e., \emph{bi}directional) application of the $\nu\mapsto \nu^{\mathcal{A}}$ operation.
    \begin{lemma}
    \label{lem:mu.ba.euals.swap}
    Let $K=2$ and define 
    \[ \mathcal{S}\colon\X_1\times\X_2\to\X_2\times \X_1, \quad (x_1,x_2)\mapsto (x_2,x_1).\]
    Then, for every $\mu\in\mathcal{P}(\X)$,
        \[
       \mu^{b\mathcal{A}}
       =\mathcal{S}\Big( \big( \mathcal{S}(\mu^{\mathcal{A}})\big)^{\mathcal{A}}
        \Big) .
        \]
        \begin{proof} Note that for $K=2$, there are only two relevant graph structures, $1\rightarrow 2$ and the graph without edges. For the graph without edges, the statement is clearly satisfied since $\mu^{b\mathcal{A}} = \mu^{\mathcal{A}} = \mu_1 \otimes \mu_2$. We can thus restrict to the case $1 \rightarrow 2$.
        
            For notational simplicity, write $\mu= \nu\otimes R = \mathcal{S}(\theta \otimes V)$; in particular $\nu=\mu_1$ and $\theta=\mu_2$.

            \vspace{0.5em}
            \noindent
            \emph{Step 1:} We first claim that
            \begin{align}
                \label{eq:swap.swap.first.step}
                \mathcal{S}((\nu \otimes R)^{\mathcal{A}}) 
            = \theta \otimes V^{r\mathcal{A}},
            \end{align}  
            where 
            \begin{align*} 
            V^{r\mathcal{A}}(x_2, dx_1) 
            &= \int H(\tilde{x}_1, dx_1) \, V(x_2, d\tilde{x}_1), \qquad 
            H(\tilde{x}_1, dx_1) 
            = \sum_{\tilde c_1 \in \mathcal{A}_1} \eins_{\tilde c_1}(\tilde{x}_1)\nu_{|\tilde c_1}(dx_1).
            \end{align*}
            
            To show \eqref{eq:swap.swap.first.step}, it suffices to test it for Borel sets of the form $A \times B$ which satisfy $A \subseteq c_1$ and $B \subseteq c_2$ for some fixed $c_1 \in \mathcal{A}_1$ and $c_2 \in \mathcal{A}_2$. 
            Denote by $R^\mathcal{A}$ the kernel of $\mu^\mathcal{A}$, that is,  $\mu^\mathcal{A}=\nu\otimes R^\mathcal{A}$.
            Then,  for $x_1\in A$, we have $R^{\mathcal{A}}(x_1, B) = \int R(\tilde{x}_1, B) \nu_{|c_1}(d\tilde{x}_1)$ and thus
            \begin{equation}\label{eq:firstswap}
                (\nu \otimes R)^{\mathcal{A}}(A \times B) 
            = \nu(A) \int R(\tilde{x}_1, B)\,\nu_{|c_1}(d\tilde{x}_1).
            \end{equation}
            Moreover, since $H(\tilde x_1, A) = \eins_{c_1}(\tilde x_1) \nu_{|c_1}(A)$  (in particular it is zero for $\tilde x_1\notin c_1$) and $\nu_{|c_1}(A) = \nu(A)/\nu(c_1)$, 
            \begin{align*}
            \theta \otimes V^{r\mathcal{A}}(B \times A) &= \int_B \int H(\tilde x_1, A) \, V(x_2, d\tilde x_1) \,\theta(dx_2)\\
            &= \frac{\nu(A)}{\nu(c_1)} \int_B V(x_2, c_1) \,\theta(dx_2)\\
            &= \frac{\nu(A)}{\nu(c_1)} \int_{c_1} R(x_1, B) \,\nu(dx_1) = \nu(A) \int R(x_1, B) \,\nu_{|c_1}(dx_1).
            \end{align*}
            This readily shows \eqref{eq:swap.swap.first.step}.
            
              \vspace{0.5em}
            \noindent
            \emph{Step 2:} Consider $A$ and $B$ as above. 
            Note that by the definition of $\mathcal{S}$ and  \eqref{eq:swap.swap.first.step},
            \[ \mathcal{S}\Big( \big(\mathcal{S}((\nu \otimes R)^{\mathcal{A}}))^{\mathcal{A}} \Big)(A \times B) 
            = (\theta \otimes V^{r\mathcal{A}})^{\mathcal{A}}(B \times A).\]
            Moreover, by repeating the steps in \eqref{eq:firstswap} and using that $V^{r\mathcal{A}}(x_2,A)=\nu|_{c_1}(A) V(x_2,c_1)$, 
             \begin{align*}
            (\theta \otimes V^{r\mathcal{A}})^{\mathcal{A}}(B \times A)
            &=\theta(B) \int V^{r\mathcal{A}}(x_2, A) \,\theta_{|c_2}(dx_2) \\
            &= \theta(B) \nu_{|c_1}(A) \int V(x_2, c_1) \,\theta_{|c_2}(dx_2) \\
            &= \theta_{|c_2}(B) \nu_{|c_1}(A) (\theta \otimes V)(c_2 \times c_1).
            \end{align*}
            Finally, by  \eqref{eq:mu.as.local.prod}, the last term is equal to $\mu^{b\mathcal{A}}(A\times B)$, which is exactly what we needed to show.
        \end{proof}
    \end{lemma}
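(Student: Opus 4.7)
My proof plan is to verify the claimed identity by testing both sides on a $\pi$-system of Borel sets, namely products $A \times B$ with $A \subseteq c_1$ and $B \subseteq c_2$ for cells $c_1 \in \mathcal{A}_1$, $c_2 \in \mathcal{A}_2$. On such sets, the local product structure \eqref{eq:mu.as.local.prod} gives at once
\[
    \mu^{b\mathcal{A}}(A \times B) = \mu(c_1 \times c_2)\,\mu_{1|c_1}(A)\,\mu_{2|c_2}(B),
\]
so it suffices to compute $\mathcal{S}((\mathcal{S}(\mu^{\mathcal{A}}))^{\mathcal{A}})(A\times B) = (\mathcal{S}(\mu^{\mathcal{A}}))^{\mathcal{A}}(B \times A)$ and match this expression.

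The graph without edges is trivial since then all three measures equal $\mu_1 \otimes \mu_2$, so I would focus on the graph $1 \to 2$. Writing $\mu = \mu_1 \otimes R$, I would first compute directly from Definition \ref{def:mu.A} that, for $A \subseteq c_1$,
\[
    \mu^{\mathcal{A}}(A \times c_2) = \mu_1(A)\,\mu(c_2 \mid c_1),
\]
and that the second marginal of $\mu^{\mathcal{A}}$ coincides with $\mu_2$. Hence $\mathcal{S}(\mu^{\mathcal{A}})$ disintegrates as $\mu_2 \otimes V$ for a suitable ``backward'' kernel $V(x_2, dx_1)$.

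Now I apply the $\mathcal{A}$-operation to $\mu_2 \otimes V$, treating the swapped coordinates as the new $1 \to 2$ graph: this replaces $V(x_2, \cdot)$ by $V^{\mathcal{A}}(x_2, \cdot) = \int V(\tilde x_2, \cdot)\,(\mu_2)_{|c_2(x_2)}(d\tilde x_2)$, which is constant in $x_2$ on each cell. Evaluating on $B \times A$ with $B \subseteq c_2$, this constancy lets me pull $\mu_{2|c_2}(B)$ out of the integral, leaving the key identification
\[
    \int_{c_2} V(\tilde x_2, A)\, \mu_2(d\tilde x_2) \;=\; \mu^{\mathcal{A}}(A \times c_2) \;=\; \mu_1(A)\,\mu(c_2 \mid c_1) \;=\; \mu_{1|c_1}(A)\,\mu(c_1 \times c_2),
\]
which follows from the first step and the definition of the conditional measure. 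Combining, I obtain $(\mathcal{S}(\mu^{\mathcal{A}}))^{\mathcal{A}}(B \times A) = \mu(c_1 \times c_2)\,\mu_{1|c_1}(A)\,\mu_{2|c_2}(B)$, which matches $\mu^{b\mathcal{A}}(A \times B)$ as desired.

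The main obstacle is purely bookkeeping: keeping straight which cell-averaging acts at each stage (the first $\mathcal{A}$-operation smooths the forward kernel $R$ along $c_1$; after the swap, the second $\mathcal{A}$-operation smooths the backward kernel $V$ along $c_2$) and confirming that, despite $V$ not being canonically defined, the quantity $\int_{c_2} V(\cdot, A)\,\mu_2(d\tilde x_2)$ is intrinsic because it equals a joint mass of $\mu^{\mathcal{A}}$. A minor technical point is treating cells of $\mu_2$-mass zero; this is absorbed by the convention $\nu_{|c} = \delta_m$ and contributes nothing to the identity on either side.
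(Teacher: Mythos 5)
Your proposal is correct and follows essentially the same route as the paper's proof: you test both sides on rectangles $A \times B$ with $A \subseteq c_1$, $B \subseteq c_2$, track the two successive cell-averagings, and match the result against the local product formula \eqref{eq:mu.as.local.prod}, arriving at $\mu(c_1\times c_2)\,\mu_{1|c_1}(A)\,\mu_{2|c_2}(B)$ on both sides. The only (minor) difference is that where the paper disintegrates the original $\mu$ backwards and writes out the intermediate kernel $V^{r\mathcal{A}}$ explicitly via the averaging kernel $H$, you work with an arbitrary backward kernel of $\mu^{\mathcal{A}}$ and note that the second averaging only depends on the intrinsic cell masses $\mu^{\mathcal{A}}(A\times c_2)=\mu_1(A)\mu(c_2\mid c_1)$, which slightly streamlines that step without changing the argument.
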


The next ingredient to the proof of Theorem \ref{thm:estTV} is to show that the equality $\nu(c_I) = \nu^{b\mathcal{A}}(c_I)$ from Lemma \ref{lem:mubAwelldefined} ``almost'' also holds for arbitrary cells $c \in \mathcal{A}$ under Assumption \ref{ass:TV.Lip}; we only require a small adjustment of order $\delta_\mathcal{A}^2$ (where $\delta_\mathcal{A}$ is, as always, the size of the cells in $\mathcal{A}$).

Before we proceed to state the result, let us spell out an observation that is important in what follows.
Firstly, if $\nu\in\mathcal{P}_G(\X)$, then by definition $\nu(dx_k \mid x_{1:k-1}) = \nu(dx_k \mid x_{\pa(k)})$.
This relation is no longer true for sets, e.g., it is no longer true that $\nu(c_k \mid c_{1:k-1}) $ is equal to $\nu(c_k \mid c_{\pa(k)})$ for cells $c\in\mathcal{A}$.
However, if  $\nu(c_k | x_{\pa(k)})$ is constant for $x_{\pa(k)}\in c_{\pa(k)}$, then it is true.
In particular, we have that 
\begin{align}
    \label{eq:mu.ba.markov.cells}
    \mu^{b\mathcal{A}}(c_k \mid c_{1:k-1}) = \mu^{b\mathcal{A}}(c_k \mid c_{\pa(k)})
\end{align}
for all cells $c\in\mathcal{A}$ and $k=1,\dots, K$.

In the formulation of the next result we will use signed measures and denote by $\|\nu\|_{\TV}=\TV(\nu,0)=\sup_{f: |f|\leq 1/2} \int f\, d\nu$ the total variation norm of a signed measure $\nu$.

  \begin{lemma}
  \label{lem:mu.bA.mu.almost.same.cell}
        Let Assumption \ref{ass:graph_struc} hold and assume that $\mu$ satisfies Assumption \ref{ass:TV.Lip}.
        Then there exist a constant $C$ that only depends on $G$ and $L$ and a signed measure $\tilde\mu$ which satisfies $\|\tilde \mu \|_{\TV} \leq C\delta_\mathcal{A}^2$, such that 
        \[
        \mu(c) = \mu^{b \mathcal{A}}(c) + \tilde\mu(c) ~~ \text{ for all } c \in \mathcal{A}.
        \]
        \begin{proof}
           We inductively show the corresponding statement for $\mu_{1:k}$ and $\mu^{b\mathcal{A}}_{1:k}$.
            The start $k=1$ is trivial since $\mu_1=\mu^{b\mathcal{A}}_1$; hence we may choose $\tilde \mu_1=0$.
            The proof for the induction from $k-1$ to $k$ requires some preparations, spelled out in the next step.

            \vspace{0.5em}
            \noindent
            \emph{Step 0:} We split the nodes 
            \[\{1,\dots, k\}=\anc(k) \cup \pa(k)\cup \{k\} \]
            into $k$, its parents, and the rest.
            Moreover, we disintegrate $\mu_{1:k}$ via $\pa(k)$,\footnote{Notably, the case $\pa(k)=\emptyset$ is trivial, as then $\mu_{1:k} = \mu_{1:k-1} \otimes \mu_k$ and $\mu_{1:k}^{b\mathcal{A}} = \mu_{1:k-1}^{b\mathcal{A}} \otimes \mu_k$.} thus
            \begin{align*}
            \mu_{1:k}(dx_{1:k})
            = \mu_{\pa(k)}(dx_{\pa(k)}) \, \mu(dx_k\mid x_{\pa(k)}) \,\mu(dx_{\anc(k)} \mid x_{\pa(k)}).
            \end{align*}
            Note that this disintegration holds true since $\mu \in \mathcal{P}_G$, which means the variable $k$ and the variables in $\anc(k)$ are conditionally independent given the $\pa(k)$ variables.

            To simplify notation, we shall assume that $k=3$, $\pa(k)=2$, $\anc(k)=1$, and write  $R_{2\to 3}(x_2,dx_3) = \mu(dx_3 \mid x_2)$ and similarly for $R_{2\to 1}$; thus
            \[ \mu(dx_{1:3}) = \mu_2(dx_2) R_{2\to 3}(x_2, dx_3) R_{2\to 1}(x_2, dx_1).\]
            This can be done without loss of generality and the proof in the original case follows simply by exchanging notation.

        \vspace{0.5em}
            \noindent
            \emph{Step 1:}
            Define the averaged version of $R_{2\to 3}$ via
            \begin{align*}
                \bar R_{2\to 3}(x_2, dx_3) 
                &:=  \int R_{2\to 3}(\tilde x_2, dx_3)\, \mu_2|_{c_2(x_2)}(d\tilde x_2) 
            \end{align*}
            for $x_2\in\X_2$.
            Thus $x_2\mapsto \bar R_{2\to 3}(x_2, dx_3) $ is constant as long as $x_2$ belongs to a fixed cell $c_2$, and we often write $\bar R_{2\to 3}(c_2, dx_3)$ in that case.
           Using the convexity of $\TV$ followed by Assumption \ref{ass:TV.Lip}, we have that 
            \begin{align}
            \label{eq:TV.convex}
            \begin{split}
            &\TV(\bar R_{2\to 3}(x_2,\cdot),  R_{2\to 3}(x_2, \cdot)) \\
            &\leq \int \TV( R_{2\to 3}(\tilde x_2,\cdot),  R_{2\to 3}(x_2, \cdot) )\, \mu_2|_{c_2(x_2)}(d\tilde x_2)
            \leq L \delta_\mathcal{A}
            \end{split}
            \end{align}
            
                           and hence
            \[  R_{2\to 3}(x_2, dx_3)  
            = \bar R_{2\to 3}(x_2, dx_3)  + \delta_\mathcal{A} D_{2\to 3}(x_2,dx_3) \]
            for some kernel $D_{2\to 3}$ that satisfies $\|D_{2\to 3}(x_2,dx_3) \|_{\TV} \leq L$.
            Moreover,  by definition we find $\int_{c_2} D_{2\to 3}(x_2,dx_3)\,\mu_2(dx_2) = 0 $ for every $c_2\in\mathcal{A}_2$.
        
           \vspace{0.5em}
            \noindent
            \emph{Step 2:} 
            Here we analyse the error made by replacing $R_{2\to 3}$ by $\bar R_{2\to 3}$.
            Fix $c_{1:3}\in\mathcal{A}_{1:3}$
            Using the decomposition $ R_{2\to 3}= \bar R_{2\to 3} + \delta_\mathcal{A} D_{2\to 3}$ and that $\bar R_{2\to 3}(x_2,\cdot)$ is constant for $x_2\in c_2$,   
             \begin{align}
             \label{eq:mu.bA.cells.almost.step.1}
             \begin{split}
                \mu(c_{1:3}) 
                &= \int_{c_{1:2}} \left( \bar R_{2\to 3}(x_2, c_3)  + \delta_\mathcal{A} D_{2\to 3}(x_2,c_3) \right)\, \mu_{1:2}(dx_{1:2})  \\
                &= \mu_{1:2}(c_{1:2}) \bar R_{2\to 3}(c_2, c_3)  +\delta_\mathcal{A} \int_{c_{1:2}}D_{2\to 3}(x_2, c_3) \, \mu_{1:2}(dx_{1:2}) \\
                &=:({\rm I}) + ({\rm II}).
                \end{split}
            \end{align}
            By the induction hypothesis,
            \begin{align}
              \label{eq:mu.bA.cells.almost.step.2}
             \begin{split}
                ({\rm I}) &=\left( \mu_{1:2}^{b\mathcal{A}}(c_{1:2}) + \tilde \mu_{1:2}(c_{1:2}) \right) \bar R_{2\to 3}(c_2, c_3)  \\
            &= \mu_{1:3}^{b\mathcal{A}}(c_{1:3}) + \tilde \mu_{1:2}(c_{1:2})\bar R_{2\to 3}(c_2, c_3) .
            \end{split}
            \end{align}
            (For the second equality note that while $ \mu_{1:2}^{b\mathcal{A}}(c_{1:2})\bar R_{2\to 3}(c_2, c_3) = \mu_{1:3}^{b\mathcal{A}}(c_{1:3}) $ does not hold for arbitrary sets $c_{1:3}\subset \X_{1:3}$, it does hold for cells.)

            \vspace{0.5em}
            \noindent
            \emph{Step 3:}
            We proceed to control the term  $({\rm II})$. 
            Analogously to Step 1, define $\bar R_{2\to 1}(x_2,dx_1) =\int R_{2\to 1}(\tilde x_2, dx_1) \,\mu_{2}|_{c_2(x_2)}(d\tilde x_2)$ and $D_{2\to 1}$ via  $ R_{2\to 1}  = \bar R_{2\to 1}+ \delta_\mathcal{A} D_{2\to 1}$.
            Using this notation, 
              \begin{align}  
            \label{eq:mu.bA.cells.almost.step.3}
             \begin{split}
               ({\rm II}) 
               &= \delta_\mathcal{A} \int_{c_{2}} D_{2\to 3}(x_2, c_3) \bar R_{2\to 1}(x_2, c_1)  \, \mu_2(dx_2)  \\
               &\quad + \delta_\mathcal{A}^2  \int_{c_{2}} D_{2\to 3}(x_2, c_3)  D_{2\to 1}(x_2, c_1) \, \mu_2(dx_2) \\
               &=: ({\rm III}) + ({\rm IV}).
            \end{split}
            \end{align}
             By definition, $|({\rm IV})|\leq L^2 \delta_\mathcal{A}^2$.
            Moreover, 
            since $\bar R_{2\to 1}(x_2, c_1)$ is constant for $x_2\in c_2$ and $\int_{c_{2}} D_{2\to 3}(x_2, c_3)\,\mu_2(dx_2)=0$ by the definition of $D_{2\to 3}$, it follows that 
               \begin{align}  
            \label{eq:mu.bA.cells.almost.step.4}
             \begin{split}
             ({\rm III})
             &= \delta_\mathcal{A}\int_{c_{2}} D_{2\to 3}(x_2, c_3)  \,\mu_2(dx_2) \,\bar R_{2\to 1}(c_2, c_1)  
             =0.
        \end{split}
            \end{align}            
            
             \vspace{0.5em}
            \noindent
            \emph{Step 4:}
            Define the (signed) measure 
            \[ \tilde{\mu}_{1:3}:= \tilde \mu_{1:2}\otimes \bar R_{2\to 3} + \delta_\mathcal{A}^2 \cdot  \mu_2 \otimes D_{2\to 3}\otimes D_{2\to 1}. \]
            One readily checks that $\|\tilde{\mu}_{1:3}\|_{\TV} \leq \|\tilde \mu_{1:2}\|_{\TV} + (\delta_\mathcal{A} L)^2 $.
            By \eqref{eq:mu.bA.cells.almost.step.1}--\eqref{eq:mu.bA.cells.almost.step.4} we have that $\mu^{b\mathcal{A}}(c_{1:3}) = \mu(c_{1:3}) + \tilde \mu(c_{1:3})$, which completes the induction step and thus the proof.            
        \end{proof}
    \end{lemma}

To eventually bound $\W(\mu, \mu^{b\mathcal{A}})$, we first show how to control $\TV(\mu, \mu^{b\mathcal{A}})$.
	\begin{lemma}\label{lem:TVboundtwo}
		Let $K=2$ and assume that $\mu$ satisfies Assumption \ref{ass:TV.Lip}.
        Then,
		\begin{align}
        \label{eq:lemma.tv.estiamte}
		\TV(\mu, \mu^{b\mathcal{A}})
        &\leq 2L\delta_{\mathcal{A}},\\
         \label{eq:lemma.tv.integrated.estiamte}
			\int \TV(\mu(dx_2 \mid x_1), \mu^{b\mathcal{A}}(dx_2 \mid x_1)) \,\mu(dx_1) 
            &\leq 2L \delta_{\mathcal{A}}.
		\end{align}
        	\begin{proof}
            Write
            \[ \TV(\mu,\mu^{b\mathcal{A}})
            \leq\TV(\mu,\mu^{\mathcal{A}})   + \TV(\mu^\mathcal{A},\mu^{b\mathcal{A}}).\]
            To estimate $\TV(\mu,\mu^{\mathcal{A}})$, since $\mu^{\mathcal{A}}(dx_2\mid x_1)$ is the average of $\mu(dx_2\mid \tilde x_1)$ over $\tilde{x}_1$ that satisfy $\|x_1-\tilde x_1\|\leq \delta_\mathcal{A}$, it follows as in  \eqref{eq:TV.convex} from convexity of the total variation distance  and Assumption \ref{ass:TV.Lip} that 
            \[\TV(\mu(dx_2 \mid x_1), \mu^{\mathcal{A}}(dx_2 \mid x_1)) 
            \leq  L  \delta_{\mathcal{A}}.\]
            And since $\mu$ and $\mu^\mathcal{A}$ have the same marginals, 
            \begin{align}
                \label{eq:TV.integral.proof}
                \TV(\mu,\mu^\mathcal{A}) = \int \TV(\mu(dx_2 \mid x_1), \mu^{\mathcal{A}}(dx_2 \mid x_1))   \,\mu_1(dx_1)
            \leq  L  \delta_{\mathcal{A}}.
            \end{align}

            To estimate $\TV(\mu^\mathcal{A},\mu^{b\mathcal{A}})$, we recall from Lemma \ref{lem:mu.ba.euals.swap} that 
            \[\mu^{b\mathcal{A}} = \mathcal{S}\Big( \big(\mathcal{S}(\mu^\mathcal{A})\big)^\mathcal{A} \Big) \]
            and (recalling the proof of Lemma~\ref{lem:mu.ba.euals.swap}) that
            \[ \mu^\mathcal{A} = \mu_2(dx_2) V^{r\mathcal{A}}(x_2,dx_1)\]
            where $V^{r\mathcal{A}}(x_2,dx_1)= \int H(\tilde x_1, dx_1) \, \mu(d\tilde x_1 \mid x_2)$.
            The exact form of $H$ is not important, only that $H$ is a stochastic kernel (i.e., it has total variation norm 1 for each $\tilde x_1$), hence $x_2\mapsto V^{r\mathcal{A}}(x_2,dx_1)$ is $L$-Lipschitz in total variation by the data processing inequality.
            Since the operator $\mathcal{S}^{-1}$ does not change the total variation distance,
            \[\TV \left(  \mu^{\mathcal{A}}, \mu^{b\mathcal{A}} \right)
            = \TV\left( \mu^{\mathcal{A}},  \mathcal{S}(\mathcal{S}(\mu^{\mathcal{A}})^\mathcal{A}) \right)
            = \TV \left( \mathcal{S}(\mu^{\mathcal{A}}),  \mathcal{S}(\mu^{\mathcal{A}})^\mathcal{A} \right).\]
            Thus, the same arguments as used when estimating $\TV(\mu,\mu^{\mathcal{A}})$ show that $\TV(\mu^{\mathcal{A}}, \mu^{b\mathcal{A}})\leq L \delta_\mathcal{A}$.
            This completes the proof of \eqref{eq:lemma.tv.estiamte}.
            Finally \eqref{eq:lemma.tv.integrated.estiamte} follows because $\mu$ and $\mu^{b\mathcal{A}}$ have the same marginals.
		\end{proof}
	\end{lemma}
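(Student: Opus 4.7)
The plan is to apply the triangle inequality
\[
\TV(\mu, \mu^{b\mathcal{A}}) \leq \TV(\mu, \mu^{\mathcal{A}}) + \TV(\mu^{\mathcal{A}}, \mu^{b\mathcal{A}})
\]
and bound each summand by $L\delta_{\mathcal{A}}$. The first summand mirrors the argument from the Wasserstein section, but in $\TV$: by construction, $\mu^{\mathcal{A}}(dx_2 \mid x_1)$ is a convex combination of $\mu(dx_2 \mid \tilde{x}_1)$ with $\tilde{x}_1$ ranging over the cell $c_1(x_1)$ of diameter $\delta_{\mathcal{A}}$. The first inequality of Assumption~\ref{ass:TV.Lip} together with convexity of $\TV$ gives $\TV(\mu(\cdot \mid x_1), \mu^{\mathcal{A}}(\cdot \mid x_1)) \leq L\delta_{\mathcal{A}}$; since $\mu$ and $\mu^{\mathcal{A}}$ have the same first marginal, disintegrating yields $\TV(\mu, \mu^{\mathcal{A}}) \leq L\delta_{\mathcal{A}}$.

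For the second summand, I would invoke Lemma~\ref{lem:mu.ba.euals.swap} to write $\mu^{b\mathcal{A}} = \mathcal{S}((\mathcal{S}(\mu^{\mathcal{A}}))^{\mathcal{A}})$. Since $\mathcal{S}$ is an isometry in $\TV$,
\[
\TV(\mu^{\mathcal{A}}, \mu^{b\mathcal{A}}) = \TV\bigl(\mathcal{S}(\mu^{\mathcal{A}}), (\mathcal{S}(\mu^{\mathcal{A}}))^{\mathcal{A}}\bigr),
\]
which has exactly the structure of the first summand, but applied to the reversed measure $\mathcal{S}(\mu^{\mathcal{A}}) = \mu_2 \otimes V^{r\mathcal{A}}$ from the proof of Lemma~\ref{lem:mu.ba.euals.swap}. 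Repeating the argument requires $x_2 \mapsto V^{r\mathcal{A}}(x_2,\cdot)$ to be $L$-Lipschitz in $\TV$: this is where the \emph{third} inequality of Assumption~\ref{ass:TV.Lip} enters, since $V^{r\mathcal{A}}(x_2,\cdot) = \int H(\tilde{x}_1,\cdot)\,\mu(d\tilde{x}_1 \mid x_2)$, and composing the $L$-Lipschitz backward kernel $\mu(\cdot \mid x_2)$ with the fixed stochastic kernel $H$ cannot increase $\TV$ (for any $|f|\leq 1/2$, the map $\tilde{x}_1 \mapsto \int f\,dH(\tilde{x}_1,\cdot)$ is again bounded by $1/2$). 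This yields $\TV(\mu^{\mathcal{A}}, \mu^{b\mathcal{A}}) \leq L\delta_{\mathcal{A}}$, proving \eqref{eq:lemma.tv.estiamte}.

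For \eqref{eq:lemma.tv.integrated.estiamte}, I would observe that the first marginal of $\mu^{b\mathcal{A}}$ equals $\mu_1$, which is immediate from the definition at the root node $k=1$ since $\pa(1)=\emptyset$ forces $\mu^{b\mathcal{A}}(dx_1) = \sum_{c_1} \mu(c_1)\,\mu_{1|c_1}(dx_1) = \mu_1(dx_1)$. Whenever two joint probability measures share a common first marginal $\pi$, one has the disintegration identity $\TV(\mu,\mu^{b\mathcal{A}}) = \int \TV(\mu(\cdot\mid x_1),\mu^{b\mathcal{A}}(\cdot\mid x_1))\,\pi(dx_1)$ (immediate from $\TV(\alpha,\beta) = \frac{1}{2}\int |d\alpha/d\rho - d\beta/d\rho|\,d\rho$ applied to $\rho = \pi \otimes \lambda$). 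Combining this with \eqref{eq:lemma.tv.estiamte} yields \eqref{eq:lemma.tv.integrated.estiamte}.

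The main obstacle is the verification that the reverse-averaged kernel $V^{r\mathcal{A}}$ inherits the Lipschitz constant in $\TV$. This is precisely the point that distinguishes the strong TV-Lipschitz framework from the Wasserstein one in Section~\ref{sec:wlip}, and clarifies why a two-sided (forward \emph{and} backward) smoothness condition is indispensable---under Assumption~\ref{ass:W.Lip} alone, one would only obtain a Wasserstein bound on $V^{r\mathcal{A}}$, which cannot be fed back into the $\TV$ recursion above.
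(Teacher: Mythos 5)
Your proposal is correct and follows the same route as the paper: the triangle inequality through $\mu^{\mathcal{A}}$, convexity of $\TV$ plus the forward TV-Lipschitz condition for the first summand, and the swap identity of Lemma~\ref{lem:mu.ba.euals.swap} combined with the backward TV-Lipschitz condition for the second. The paper leaves the data-processing step for $V^{r\mathcal{A}}$ (that precomposing with the stochastic kernel $H$ cannot increase $\TV$) and the disintegration identity for shared marginals implicit, whereas you spell them out; the substance is identical.
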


In the proof of Theorem \ref{thm:estTV} we shall make use of the Kantorovich–Rubinstein duality, that is, for any $\nu,\tilde{\nu}\in\mathcal{P}(\X)$,
        \begin{align}
            \label{eq:W.dual.rep}
            \W(\nu,\tilde{\nu}) = \sup_{f\in\mathcal{F}} \int f \,d(\nu-\tilde{\nu}),
        \end{align} 
        where $\mathcal{F}$ is the set of all functions from $(\X,\|\cdot\|)$ to $\mathbb{R}$ that are $1$-Lipschitz (and satisfy $f(0)=0$).
        See, e.g., \cite[Theorem 5.10 \& 5.16]{villani2008optimal} for a proof of this fact.
        
The following gives the main result on controlling the bias $\W(\mu, \mu^{b\mathcal{A}})$ for Theorem \ref{thm:estTV}.
\begin{lemma}\label{lem:TVboundbias}
    	Suppose that Assumption \ref{ass:graph_struc} holds and that $\mu$ satisfies Assumption \ref{ass:TV.Lip}.
    Then, 
	\begin{align*}
	\TV(\mu, \mu^{b\mathcal{A}}) &\leq (K-1)2L\delta_{\mathcal{A}},\\
	\W(\mu, \mu^{b\mathcal{A}}) &\leq ((K-1)2L+2C)  \delta_{\mathcal{A}}^2,
	\end{align*}
    where $C$ is the constant from Lemma \ref{lem:mu.bA.mu.almost.same.cell}.
	\begin{proof}
        The first inequality follows by using the optimal transport definition of the total variation distance\footnote{That is, the total variation between two probability measures is equal to the optimal transport distance with discrete cost function $c(x,y) = 1_{x\neq y}$: $\TV(\nu,\tilde\nu)=\inf_\pi \int c(x,y),\pi(dx,dy)$ with the infimum being over all couplings $\pi$ between $\nu$ and $\tilde\nu$.}  and iterating  \eqref{eq:lemma.tv.integrated.estiamte}.
        Indeed, we claim that for every $M=1, \dots, K$,
        \[
		\TV\left(\mu_{1:M} ,\mu^{b\mathcal{A}}_{1:M}\right) \leq (M-1) 2L \delta_{\mathcal{A}}.
		\]
        
        The proof of this claim is via induction, noting that the case $M=2$ is already covered by Lemma \ref{lem:TVboundtwo}.
        For the induction step from $M-1$ to $M$, let $\pi$ be an optimal coupling for the OT-representation of $\TV(\mu_{1:M-1},\mu^{b\mathcal{A}}_{1:M-1})$.
        Thus 
        \[\pi(\{x_{1:M-1},y_{1:M-1} : x_{1:M-1}\neq y_{1:M-1}\}) \leq (M-2)2L\delta_\mathcal{A}.\]
        Similarly to the proof of Lemma \ref{lem:Wdecomp}, let $\pi^{x_{\pa(M)},y_{\pa(M)}}$ be a measurable family of couplings between $\mu(dx_{M}|x_{\pa(M)})$ and $\mu^{b\mathcal{A}}(dy_{M}|y_{\pa(M)})$ which are optimal for their $\TV$-distance; and let $\Gamma$ be the concatenation with respect to $\pi$.
        Thus
        \begin{align*}
          &  \Gamma(\{x_{1:M}, y_{1:M} : x_{1:M} \neq y_{1:M}\}) \\
            &\leq (M-2)2L\delta_\mathcal{A} +\int  \pi^{ x_{\pa(M)},x_{\pa(M)}}(\{x_M,y_M : x_M\neq y_M\}) \, \mu(dx_{1:M-1})\\
            &= (M-2)2L\delta_\mathcal{A} +\int  \TV\left(  \mu(dx_{M} | x_{\pa(M)}), \mu^{b\mathcal{A}}(dx_{M}|x_{\pa(M)} ) \right) \, \mu(dx_{1:M-1})\\
            &\leq (M-2)2L\delta_\mathcal{A} + 2L\delta_\mathcal{A},
        \end{align*}
        where we used the induction hypothesis in the first inequality, and \eqref{eq:lemma.tv.integrated.estiamte} in the second inequality.
        This completes the proof of the claim, and thus of the first statement in the lemma.

       For the proof of the second statement we rely on the Kantorovich-Rubinstein duality, see \eqref{eq:W.dual.rep}.
       Let $f$ be $1$-Lipschitz with $f(0)=0$.
       Denoting by $m_c$ the midpoint of a cell $c\in\mathcal{A}$, we have that
\begin{align*}
\int f\,d(\mu-\mu^{b\mathcal{A}})
  &= \sum_{c\in\mathcal A}    \int_c f(x)-f(m_c) \,d(\mu-\mu^{b\mathcal{A}}) + \sum_{c\in\mathcal{A}} 
        f(m_c)\,(\mu(c)-\mu^{b\mathcal{A}}(c)) \\
        &=:({\rm I})+({\rm II}).
\end{align*}
Since $|f(x)-f(m_c)|\leq \delta_\mathcal{A}/2$ for every $c\in\mathcal{A}$ and every $x\in c$, by the first part of the lemma,
\[
({\rm I}) \leq  \sum_{c\in\mathcal A} \frac{\delta_{\mathcal A}}{2}\,|\mu(c)-\mu^{b\mathcal{A}}(c)|
   \leq  \delta_{\mathcal A}\,\|\mu- \mu^{b\mathcal{A}}\|_{\mathrm{TV}}
    \leq (K-1)2L
  \delta_{\mathcal A}^2.
\]
Moreover, using the notation of Lemma \ref{lem:mu.bA.mu.almost.same.cell}, and that $|f(m_c)|\le 1$ for every $c\in\mathcal{A}$,
\[
({\rm II})=  \sum_{c\in\mathcal{A}} f(m_c)\,\tilde \mu(c) 
\leq 2 \|\tilde \mu \|_{\mathrm{TV}} 
\leq 2 C \delta_\mathcal{A}^2.
\]
As $f$ was arbitrary, this shows that $\W(\mu,\mu^{b\mathcal{A}}) \leq ((K-1)2L+2C) \delta_{\mathcal A}^2$, as claimed.
	\end{proof}
\end{lemma}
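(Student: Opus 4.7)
The plan is to prove the two inequalities in sequence, bootstrapping from the $K=2$ case in Lemma~\ref{lem:TVboundtwo} and from the cell-wise near-equality in Lemma~\ref{lem:mu.bA.mu.almost.same.cell}.

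For the total variation bound, I would induct on $M$, proving that $\TV(\mu_{1:M}, \mu^{b\mathcal{A}}_{1:M}) \leq (M-1)\,2L\delta_\mathcal{A}$. The base case $M=2$ is exactly Lemma~\ref{lem:TVboundtwo}. For the step from $M-1$ to $M$, the cleanest route is the optimal-transport definition $\TV(\nu,\tilde\nu) = \inf_\pi \pi(\{x\neq y\})$: pick an optimal coupling $\pi$ of $\mu_{1:M-1}$ and $\mu^{b\mathcal{A}}_{1:M-1}$, then concatenate it with a measurable family of optimal TV couplings $\pi^{x_{\pa(M)},y_{\pa(M)}}$ between the kernels $\mu(\cdot \mid x_{\pa(M)})$ and $\mu^{b\mathcal{A}}(\cdot \mid y_{\pa(M)})$ (existence is standard measurable selection, as in Lemma~\ref{lem:Wdecomp}). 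The probability that the resulting coupling differs either inherits the disagreement from the first $M-1$ coordinates (at most $(M-2)\,2L\delta_\mathcal{A}$), or, conditionally on agreement there, arises only through the last kernel evaluated at identical parent values, contributing at most $\int \TV(\mu(\cdot\mid x_{\pa(M)}), \mu^{b\mathcal{A}}(\cdot\mid x_{\pa(M)}))\,\mu(dx_{\pa(M)}) \leq 2L\delta_\mathcal{A}$ by the localized bound \eqref{eq:lemma.tv.integrated.estiamte}.

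For the Wasserstein bound the naive estimate $\W \leq \mathrm{diam}(\X)\,\TV$ yields only order $\delta_\mathcal{A}$, so we need to square the exponent. The natural approach is Kantorovich--Rubinstein duality \eqref{eq:W.dual.rep}: for a $1$-Lipschitz $f$ with $f(0)=0$, denote by $m_c$ the midpoint of a cell $c\in\mathcal{A}$ and split
\[
\int f\,d(\mu-\mu^{b\mathcal{A}}) = \sum_{c\in\mathcal{A}}\int_c (f(x)-f(m_c))\,d(\mu-\mu^{b\mathcal{A}}) + \sum_{c\in\mathcal{A}} f(m_c)\bigl(\mu(c)-\mu^{b\mathcal{A}}(c)\bigr).
\]
On each cell the oscillation of $f$ is at most $\delta_\mathcal{A}$, so the first sum is bounded by $\delta_\mathcal{A}\cdot\TV(\mu,\mu^{b\mathcal{A}}) \leq (K-1)\,2L\,\delta_\mathcal{A}^2$ via the first part of the lemma. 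The second sum is where Lemma~\ref{lem:mu.bA.mu.almost.same.cell} enters crucially: it says precisely that $\mu(c)-\mu^{b\mathcal{A}}(c) = \tilde\mu(c)$ with $\|\tilde\mu\|_{\TV}\leq C\delta_\mathcal{A}^2$, so this sum is bounded by $2C\delta_\mathcal{A}^2$ using $|f(m_c)|\leq 1$. Taking the supremum over $f$ concludes the bound.

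The main obstacle is obtaining the $\delta_\mathcal{A}^2$ rate for the Wasserstein estimate: one needs a second-order cancellation that is invisible at the total-variation level. Lemma~\ref{lem:mu.bA.mu.almost.same.cell} provides exactly this cancellation at the level of cell masses, and the Wasserstein bound then reduces to packaging it with the $\delta_\mathcal{A}$-oscillation of $1$-Lipschitz functions on cells. Without that cell-wise refinement, one would be stuck at the TV rate $\delta_\mathcal{A}$.
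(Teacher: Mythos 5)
Your proposal is correct and follows essentially the same route as the paper: an induction over $M$ using the coupling characterization of $\TV$ with base case Lemma~\ref{lem:TVboundtwo} and the kernel bound \eqref{eq:lemma.tv.integrated.estiamte}, followed by the Kantorovich--Rubinstein duality with the midpoint split and the cell-wise near-equality from Lemma~\ref{lem:mu.bA.mu.almost.same.cell} for the $\delta_\mathcal{A}^2$ rate. The only cosmetic difference is some looseness in tracking the factor of $2$ between the cell oscillation bound ($\delta_\mathcal{A}/2$, not $\delta_\mathcal{A}$) and the normalization of $\TV$, which cancels out and leaves the same final constant.
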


\subsection{Projection to fully discrete measures}\label{subsec:fullydiscrete}

The next preliminary results needed for the proof of Theorem \ref{thm:estTV} require projecting and working on the midpoints $\mathcal{M}$ of the cells $\mathcal{A}$.

\begin{definition}
    For every $\nu\in\mathcal{P}(\X)$, define $\nu^{\mathcal{M}}\in\mathcal{P}(\mathcal{M})$  by 
\begin{align*} 
\nu^{\mathcal{M}}(\{x\}):=\nu^{b\mathcal{A}}(c(x)), 
\qquad x\in\mathcal{M}.
\end{align*}
    In particular, this defines $\hat\mu^\mathcal{M}$ using $\nu=\hat\mu$.
    \end{definition}
In particular, $\nu^{\mathcal{M}}(c)=\nu^{b\mathcal{A}}(c)$ for every $c\in\mathcal{A}$.
Note that $\nu^\mathcal{M}_1(\{x_1\})=\nu^{b\mathcal{A}}(c_1(x_1))$  for every $x_1\in\mathcal{M}_1$.
Moreover, by \eqref{eq:mu.ba.markov.cells}, we have that $\nu^{\mathcal{M}}\in\mathcal{P}_G(\X)$  if $\nu\in\mathcal{P}_G(\X)$ (hence in particular $\hat\mu^\mathcal{M}\in\mathcal{P}_G(\X)$), and by Lemma \ref{lem:mubAwelldefined}, that for every $k=2,\dots,K$ and $x_{\pa(k)}\in\mathcal{M}_{\pa(k)}$,
\begin{align} 
\label{eq:mu.ba.corollary.from.lemma}
\nu^{\mathcal{M}}(\{x_k\} \mid x_{\pa(k)})
&=\nu^{b\mathcal{A}}(c_k(x_k) \mid x_{\pa(k)})  
=\nu(c_k(x_k) \mid c_{\pa(k)}(x_{\pa(k)}))  .
\end{align}
We have the following:

\begin{lemma}
\label{lem:mu.M.kernels.Lipschitz}
 Suppose that $\mu$ satisfies Assumption \ref{ass:TV.Lip}. Then
   the kernels of $\mu^\mathcal{M}$ are $(2L+1)$-Lipschitz w.r.t.\ $\mathcal{W}$; that is, for every $2\leq k \leq K$ and $x_{\pa(k)},\tilde x_{\pa(k)}\in\mathcal{M}_{\pa(k)}$, 
   \[ \W\left( \mu^\mathcal{M}(dx_k \mid x_{\pa(k)}) ,   \mu^\mathcal{M}(dx_k \mid \tilde x_{\pa(k)}) \right)
   \leq (2L+1) \|  x_{\pa(k)} -  \tilde x_{\pa(k)} \|.\]
\end{lemma}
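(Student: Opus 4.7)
My plan is to identify $\mu^{\mathcal M}(\cdot\mid x_{\pa(k)})$ with a pushforward of a cell-averaged version of the original kernels of $\mu$, and then invoke continuity from Assumption~\ref{ass:TV.Lip} (which also gives Wasserstein-Lipschitz continuity of the original kernels, since $\W\leq \TV$ on $[0,1]^{d_k}$). Fix $x_{\pa(k)},\tilde x_{\pa(k)}\in\mathcal M_{\pa(k)}$ and put $c=c_{\pa(k)}(x_{\pa(k)})$ and $\tilde c=c_{\pa(k)}(\tilde x_{\pa(k)})$. I would first dispatch the trivial case $c=\tilde c$: by \eqref{eq:mu.ba.corollary.from.lemma} the two kernels coincide and there is nothing to prove. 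In the remaining case $c\neq\tilde c$, the midpoints lie in distinct cubes of side length $\delta_\mathcal A$, so $\|x_{\pa(k)}-\tilde x_{\pa(k)}\|\geq\delta_\mathcal A$; this key inequality lets any additive $\delta_\mathcal A$-error be absorbed into a constant times $\|x_{\pa(k)}-\tilde x_{\pa(k)}\|$.

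With $c\neq\tilde c$, let $T\colon \X_k\to \mathcal M_k$ denote the map that sends a point to the midpoint of its cell. Using \eqref{eq:mu.ba.corollary.from.lemma}, I would check that $\mu^{\mathcal M}(\cdot\mid x_{\pa(k)})=T\bigl(\mu(\cdot\mid c)\bigr)$, where $\mu(\cdot\mid c)=\frac{1}{\mu(c)}\int_c \mu(\cdot\mid y_{\pa(k)})\,\mu_{\pa(k)}(dy_{\pa(k)})$ is the cell-averaged kernel (and the usual midpoint convention is used when $\mu(c)=0$). Since $T$ displaces any point by at most $\delta_\mathcal A/2$ in $\|\cdot\|_\infty$, one has $\W(T\nu,\nu)\leq\delta_\mathcal A/2$ for every $\nu\in\mathcal P(\X_k)$, so the triangle inequality yields
\[
\W\bigl(\mu^{\mathcal M}(\cdot\mid x_{\pa(k)}),\mu^{\mathcal M}(\cdot\mid \tilde x_{\pa(k)})\bigr)
\leq \delta_\mathcal A + \W\bigl(\mu(\cdot\mid c),\mu(\cdot\mid \tilde c)\bigr).
\]

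I would then control the remaining term by joint convexity of $\W$, writing both $\mu(\cdot\mid c)$ and $\mu(\cdot\mid \tilde c)$ as averages of the original kernels $\mu(\cdot\mid y_{\pa(k)})$ and $\mu(\cdot\mid\tilde y_{\pa(k)})$, and applying Assumption~\ref{ass:TV.Lip} together with $\W\leq \TV$ to obtain
\[
\W\bigl(\mu(\cdot\mid c),\mu(\cdot\mid \tilde c)\bigr)
\leq L\sup_{y\in c,\,\tilde y\in\tilde c}\|y-\tilde y\|
\leq L\bigl(\|x_{\pa(k)}-\tilde x_{\pa(k)}\|+\delta_\mathcal A\bigr),
\]
where the last step uses that each cell has $\|\cdot\|_\infty$-diameter $\delta_\mathcal A$ and the midpoints sit at the geometric centers. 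Putting the two bounds together gives $L\|x_{\pa(k)}-\tilde x_{\pa(k)}\|+(L+1)\delta_\mathcal A$, and substituting $\delta_\mathcal A\leq \|x_{\pa(k)}-\tilde x_{\pa(k)}\|$ (valid precisely in this case) yields the claimed $(2L+1)$-Lipschitz estimate.

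The only mildly delicate point is the edge case in which $\mu(c)=0$ or $\mu(\tilde c)=0$: there the identification $\mu^{\mathcal M}(\cdot\mid x_{\pa(k)})=T(\mu(\cdot\mid c))$ has to be reinterpreted via the midpoint convention, but the same triangle-inequality and Assumption~\ref{ass:TV.Lip} argument still reduces matters to a comparison dominated by $\|x_{\pa(k)}-\tilde x_{\pa(k)}\|+\delta_\mathcal A$, which is absorbed exactly as above. I do not expect any genuinely difficult step here; the proof is essentially bookkeeping around the discretization map $T$ and the passage from pointwise to cell-averaged kernels.
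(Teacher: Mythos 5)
Your proposal is correct and follows essentially the same route as the paper: identify $\mu^{\mathcal M}(\cdot\mid x_{\pa(k)})$ as the midpoint-projection pushforward of the cell-averaged kernel, bound the comparison of cell-averaged kernels by convexity of $\W$ plus the Lipschitz assumption (using $\W\leq\TV$), pay $\delta_{\mathcal A}$ for the projection, and absorb $\delta_{\mathcal A}\leq\|x_{\pa(k)}-\tilde x_{\pa(k)}\|$ since distinct midpoints are $\delta_{\mathcal A}$-separated, arriving at the same constant $2L+1$. The only difference is bookkeeping order (the paper bounds the inter-cell distance by $2\|x_{\pa(k)}-\tilde x_{\pa(k)}\|$ up front, you absorb $\delta_{\mathcal A}$ at the end), and like the paper your argument implicitly treats zero-mass parent cells loosely, which is harmless since the Lipschitz property is only needed $\mu^{\mathcal M}$-almost surely.
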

\begin{proof}
    We first claim that, for every $x_{\pa(k)}, \tilde x_{\pa(k)}\in\mathcal{M}_{\pa(k)}$,
    \begin{align}
    \label{eq:W.A.lip}
    \begin{split}
        &\W\left( \mu(dx_k \mid c_{\pa(k)}(x_{\pa(k)})) ,   \mu(dx_k \mid c_{\pa(k)}(\tilde x_{\pa(k)})) \right) \\
  &\quad \leq 2 L \|x_{\pa(k)} -  \tilde{x}_{\pa(k)}\|.
  \end{split}
    \end{align}
    To that end, first note that for every distinct $x_{\pa(k)},\tilde x_{\pa(k)}\in\mathcal{M}_{\pa(k)}$ and $x_{\pa(k)}'\in c_{\pa(k)}(x_{\pa(k)})$ and $\tilde x_{\pa(k)}'\in c_{\pa(k)}(\tilde x_{\pa(k)})$,
    \[
    \|x_{\pa(k)}' -  \tilde{x}_{\pa(k)}'\| \leq 2 \|x_{\pa(k)} -  \tilde{x}_{\pa(k)}\|.
    \]
    Moreover, since $\mu(dx_k \mid c_{\pa(k)}(x_{\pa(k)}))$ is the average of $\mu(dx_k \mid x'_{\pa(k)})$ over $x_{\pa(k)}'\in c_{\pa(k)}(x_{\pa(k)})$ and similarly for $\mu(dx_k \mid \tilde x_{\pa(k)}')$, a twofold application of convexity of $\W$ and using Assumption \ref{ass:TV.Lip} shows that
    \begin{align*}
        &\W\left( \mu(dx_k \mid c_{\pa(k)}(x_{\pa(k)})) ,   \mu(dx_k \mid c_{\pa(k)}(\tilde x_{\pa(k)})\right) \\
        & \leq \iint \W\left( \mu(dx_k \mid x_{\pa(k)}')) ,   \mu(dx_k \mid \tilde x_{\pa(k)}')\right)  \, \ \mu|_{ c_{\pa(k)}(x_{\pa(k)}}(dx_{\pa(k)}')\mu|_{ c_{\pa(k)}(\tilde x_{\pa(k)}}(d\tilde x_{\pa(k)}')\\ 
  &\quad \leq L \max_{x_{\pa(k)}' \in c(x_{\pa(k)}) , \tilde{x}_{\pa(k)}' \in c(\tilde{x}_{\pa(k)})} \| x_{\pa(k)}' -  \tilde{x}_{\pa(k)}'\| 
   \leq 2 L \|x_{\pa(k)} -  \tilde{x}_{\pa(k)}\|
    \end{align*}
    for all $x_{\pa(k)},\tilde x_{\pa(k)}\in\mathcal{M}_{\pa(k)}$.
    This shows \eqref{eq:W.A.lip}.

\vspace{0.5em}
    In the next step, denote by $\psi\colon\mathcal{X}_k\to\mathcal{M}_k$ the map projecting cells to their centres, so that by \eqref{eq:mu.ba.corollary.from.lemma},
    \begin{align*}
        \mu^\mathcal{M}( dx_k \mid x_{\pa(k)} ) = \psi\left( \mu( dx_k \mid c_{\pa(k)}(x_{\pa(k)})) \right).
    \end{align*}
    Since $\|\psi(x_k) - x_k\|\leq \delta_\mathcal{A}/2$, we have
    \[\|\psi(x_k) - \psi(\tilde x_k)\|
    \leq \| x_k -\tilde x_k \| + \delta_\mathcal{A} \]
     for all $x_k, \tilde x_k \in \mathcal{X}_k$, and thus a basic coupling argument 
    shows that 
   \begin{align*}
        &\W\left( \mu^\mathcal{M}(dx_k \mid x_{\pa(k)}) ,   \mu^\mathcal{M}(dx_k \mid \tilde x_{\pa(k)}) \right) \\
        &\quad \leq \W\left( \mu(dx_k \mid c_{\pa(k)}(x_{\pa(k)})) ,   \mu(dx_k \mid c_{\pa(k)}(\tilde x_{\pa(k)})) \right) + \delta_\mathcal{A} \\
        &\quad \leq   2L \|x_{\pa(k)} -  \tilde{x}_{\pa(k)}\| + \delta_\mathcal{A}
    \end{align*}
    where we have used \eqref{eq:W.A.lip} in the last inequality.
    Since, for every distinct $x_{\pa(k)},\tilde{x}_{\pa(k)}\in\mathcal{M}_{\pa(k)}$ we have that $ \|x_{\pa(k)} -  \tilde{x}_{\pa(k)}\| \geq \delta_\mathcal{A}$, the claim follows.
\end{proof}

\begin{lemma}\label{lem:discreteconcrete}
    Let $1\leq k \leq K$, set  $\nu \in \mathcal{P}(\X_k)$ to be a probability measure which is supported on $\mathcal{M}_k$, and denote by $\hat{\nu}$ its empirical measure with $m$ samples.
    Recall that $2^{-\eta}$ is the side-length of the cells.
    Then, we have
	\[
	\mathbb{E}\left[ \W(\nu, \hat{\nu})\right] \leq \begin{cases}
		5 m^{-1/2}, &\text{if } d_k=1, \\
		2 \eta m^{-1/2}, &\text{if } d_k=2, \\
		8 m^{-1/2} 2^{\eta \cdot (\frac{d_k}{2}-1)}, &\text{if } d_k\geq3.
	\end{cases} 
	\]
	\begin{proof}
    		For $\ell=1,\dots, \eta$, denote by  $\mathcal{A}_k(\ell)$ the partition of $\X_k$ into cells of side-lengths $2^{-\ell}$.
        Thus $|\mathcal{A}_k(\ell)|=2^{\ell d_k}$ and $\mathcal{A}_k(\eta)=\mathcal{A}_k$, and $\mathcal{M}_k$ are the mid-points of the cells in $\mathcal{A}_k$.
		The standard refinement-of-partition chaining argument from Dudley \cite{dudley1969speed} yields that 
		\[
		\mathbb{E}\left[\W(\nu, \hat{\nu})\right] 
        \leq \sum_{\ell=1}^\eta 2^{1-\ell} \left(\frac{|\mathcal{A}_{k}(\ell)|}{m}\right)^{1/2} 
        = 2 m^{-1/2} \sum_{\ell=1}^\eta 2^{\ell \cdot (\frac{d_k}{2} - 1)}.
		\]
		The wanted estimate on $\mathbb{E}[\W(\nu, \hat{\nu})] $ in the case $d_k=2$ now follows immediately. 
        The cases $d_k=1$ and $d_k\geq 3$ follow by computing the geometric series.
	\end{proof}
\end{lemma}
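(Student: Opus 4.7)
The plan is to follow the dyadic chaining argument of Dudley and then evaluate the resulting geometric series case by case. The bulk of the work lives in a single master inequality; the three cases of the lemma are obtained by routine summations.

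First, I would establish the master bound
\[
\mathbb{E}[\W(\nu,\hat\nu)] \;\le\; \sum_{\ell=1}^{\eta} 2^{1-\ell}\sqrt{|\mathcal{A}_k(\ell)|/m}
\;=\; 2\,m^{-1/2}\sum_{\ell=1}^{\eta} 2^{\ell(d_k/2-1)}.
\]
To obtain this, I would construct a coupling between $\nu$ and $\hat\nu$ by matching mass at successively finer dyadic scales $\ell=1,\dots,\eta$. At each scale $\ell$, mass that has not yet been matched inside a common cell of $\mathcal{A}_k(\ell-1)$ must travel a distance of at most $2\cdot 2^{-\ell}$, and the unmatched mass at this level is controlled by $\sum_{c\in\mathcal{A}_k(\ell)}|\nu(c)-\hat\nu(c)|$. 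Because $\nu$ (and hence $\hat\nu$) is supported on the midpoints $\mathcal{M}_k$ of the cells in $\mathcal{A}_k=\mathcal{A}_k(\eta)$, the chaining terminates at level $\eta$ with no additional residual cost. Summing the contributions yields
\[
\W(\nu,\hat\nu) \;\le\; \sum_{\ell=1}^{\eta} 2^{1-\ell}\sum_{c\in\mathcal{A}_k(\ell)}|\nu(c)-\hat\nu(c)|.
\]
Taking expectations, $m\hat\nu(c)\sim\mathrm{Bin}(m,\nu(c))$ gives $\mathbb{E}[|\nu(c)-\hat\nu(c)|]\le \sqrt{\nu(c)/m}$, and a second application of Cauchy--Schwarz yields $\mathbb{E}[\sum_c|\nu(c)-\hat\nu(c)|]\le \sqrt{|\mathcal{A}_k(\ell)|/m}$. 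Using $|\mathcal{A}_k(\ell)|=2^{\ell d_k}$ produces the master bound.

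Second, I would evaluate the sum $S_{d_k}(\eta):=\sum_{\ell=1}^{\eta}2^{\ell(d_k/2-1)}$ in each regime. For $d_k=1$, $S_1(\eta)\le \sum_{\ell\ge 1}2^{-\ell/2}=\sqrt{2}+1<5/2$, which combined with the prefactor $2m^{-1/2}$ delivers $5\,m^{-1/2}$. For $d_k=2$, the summand equals $1$ identically, so $S_2(\eta)=\eta$ and the bound becomes $2\eta m^{-1/2}$. For $d_k\ge 3$, the series is increasing and dominated by its last term:
\[
S_{d_k}(\eta)\;\le\; 2^{\eta(d_k/2-1)}\sum_{j\ge 0}2^{-j(d_k/2-1)}\;=\;\frac{2^{\eta(d_k/2-1)}}{1-2^{-(d_k/2-1)}}\;\le\; 4\cdot 2^{\eta(d_k/2-1)},
\]
where the last inequality uses that $d_k/2-1\ge 1/2$ (so the geometric factor is at most $1/(1-2^{-1/2})=2+\sqrt{2}<4$). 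Multiplying by $2m^{-1/2}$ gives $8\,m^{-1/2}\,2^{\eta(d_k/2-1)}$, matching the constants claimed.

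The only step requiring care is the dyadic chaining inequality itself; the rest is just summing three geometric series. The fact that $\nu$ and $\hat\nu$ are supported on $\mathcal{M}_k$ is crucial, as it ensures the chaining terminates precisely at level $\eta$ and no finer-scale error appears.
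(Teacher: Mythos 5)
Your proof is correct and takes essentially the same route as the paper: you derive the Dudley-style dyadic chaining master bound $\mathbb{E}[\W(\nu,\hat\nu)] \le 2m^{-1/2}\sum_{\ell=1}^\eta 2^{\ell(d_k/2-1)}$ and then evaluate the geometric series case by case. The only difference is that you spell out the coupling-plus-Cauchy--Schwarz derivation of the master bound that the paper leaves as a citation to Dudley, and you make explicit the elementary observation that the support on $\mathcal{M}_k$ truncates the chain at scale $\eta$ with no residual; your constants ($\sqrt 2+1<5/2$ for $d_k=1$ and $1/(1-2^{-1/2})<4$ for $d_k\ge 3$) reproduce exactly the paper's $5$, $2\eta$, and $8$.
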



 The following result is an analogue to Lemma \ref{lem:conditional.iid}; we defer the proof to Appendix \ref{app:TV}.
\begin{lemma}
\label{lem:M.conditional.iid}
    Let $1\leq k \leq K$, fix $x_{\pa(k)}\in\mathcal{M}_{\pa(k)}$ and let $c_{\pa(k)}$ be the unique cell containing $x_{\pa(k)}$.
    Then, conditionally on the event $n\cdot \hat\mu^\mathcal{M}(\{x_{\pa(k)}\})=m$,
    the random probability measure $\hat\mu^\mathcal{M}( dx_k | x_{\pa(k)})$ has the same distribution as the empirical measure of $\mu^\mathcal{M}( dx_k | x_{\pa(k)})$ with sample size $m$.
\end{lemma}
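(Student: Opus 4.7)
The plan is to mirror the structure of Lemma \ref{lem:conditional.iid}, adapted to the midpoint space $\mathcal{M}$ in place of cells. First, I would rewrite $\hat\mu^{\mathcal{M}}$ in terms of the underlying i.i.d.\ sample $X^1,\ldots,X^n$ drawn from $\mu$. The construction $\nu\mapsto\nu^{b\mathcal{A}}$ applies to any probability measure, so we may form $\hat\mu^{b\mathcal{A}}$ from $\hat\mu$; then, applying Lemma \ref{lem:mubAwelldefined} to $\hat\mu$ and using that $\pa(k)$ is fully connected under Assumption \ref{ass:graph_struc}, one obtains for $c:=c_{\pa(k)}(x_{\pa(k)})$ the two identities
\[
\hat\mu^{\mathcal{M}}(\{x_{\pa(k)}\}) = \hat\mu^{b\mathcal{A}}(c) = \hat\mu(c),
\qquad
\hat\mu^{\mathcal{M}}(\{x_k\}\mid x_{\pa(k)}) = \hat\mu(c_k(x_k)\mid c),
\]
the second being the empirical analogue of \eqref{eq:mu.ba.corollary.from.lemma}.

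Next, I would let $\psi_k\colon \X_k\to\mathcal{M}_k$ denote the projection to midpoints and set $I:=\{i\leq n : X^i_{\pa(k)}\in c\}$. The two identities above give $|I|=n\hat\mu^{\mathcal{M}}(\{x_{\pa(k)}\})$ and
\[
\hat\mu^{\mathcal{M}}(\cdot\mid x_{\pa(k)}) \;=\; \frac{1}{|I|}\sum_{i\in I}\delta_{\psi_k(X^i_k)}.
\]
The claim therefore reduces to showing that, conditionally on $|I|=m$, the tuple $(\psi_k(X^i_k))_{i\in I}$ is i.i.d.\ from $\mu^{\mathcal{M}}(\cdot\mid x_{\pa(k)})$. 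The key observation is that, conditional on $X^i_{\pa(k)}\in c$, the law of $\psi_k(X^i_k)$ is the pushforward $\psi_k(\mu(\cdot\mid c))$, which by \eqref{eq:mu.ba.corollary.from.lemma} coincides with $\mu^{\mathcal{M}}(\cdot\mid x_{\pa(k)})$.

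With this identification in hand, the remainder is a standard counting identity carried out exactly as in the second half of the proof of Lemma \ref{lem:conditional.iid}: decompose $\mathbb{P}\big((\psi_k(X^i_k))_{i\in I}\in B,\,|I|=m\big)$ as a sum over $J\subseteq[n]$ with $|J|=m$, use independence of the samples to factor each term into the contribution from $i\in J$ (yielding $\mu^{\mathcal{M}}(\cdot\mid x_{\pa(k)})^{\otimes m}(B)\cdot\mu(c)^m$) and from $i\notin J$ (yielding $(1-\mu(c))^{n-m}$), and recognise the combinatorial factor as $\mathbb{P}(|I|=m)=\binom{n}{m}\mu(c)^m(1-\mu(c))^{n-m}$. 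I do not anticipate any substantive obstacle; the only nontrivial ingredient is the translation to projected samples via Lemma \ref{lem:mubAwelldefined}, after which the argument is formally identical to that of Lemma \ref{lem:conditional.iid}.
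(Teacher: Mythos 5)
Your proposal is correct and follows essentially the same route as the paper: introduce the random index set $I$, use Lemma \ref{lem:mubAwelldefined} to identify $n\hat\mu^{\mathcal{M}}(\{x_{\pa(k)}\})$ with $|I|$, invoke the counting argument from Lemma \ref{lem:conditional.iid}, and identify the pushforward under the midpoint projection $\psi_k$ with $\mu^{\mathcal{M}}(\cdot\mid x_{\pa(k)})$ via \eqref{eq:mu.ba.corollary.from.lemma}. The only (inessential) difference is the order of operations: you project the samples first and then redo the counting identity for the projected tuple, whereas the paper first applies the conclusion of Lemma \ref{lem:conditional.iid} to the raw samples $(X^i_k)_{i\in I}$ to get that $\alpha=\frac{1}{|I|}\sum_{i\in I}\delta_{X^i_k}$ is the empirical measure of $\mu(dx_k\mid c_{\pa(k)})$ and then pushes both sides forward under $\psi$, using that the pushforward of an empirical measure is the empirical measure of the pushforward.
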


\begin{lemma}
\label{lem:W.mu.M.hat.mu.M}
 Suppose that $\mu$ satisfies Assumption \ref{ass:TV.Lip}.
   There is a constant $C$ depending only on $G$ and $L$ for which
    \[ \mathbb{E}\left[  \W(\mu^\mathcal{M},\hat\mu^\mathcal{M}) \right]
    \leq C\cdot  n^{-1/2} \cdot
    \begin{cases}
        \delta_\mathcal{A}^{1-d_{\rm loc}/2} & \text{if } d_{\rm loc} \text{ is not attained at } d_k=2,\\
       \log(\frac{1}{\delta_{\mathcal{A}}}) \delta_\mathcal{A}^{1-d_{\rm loc}/2} & \text{else}.
    \end{cases}
    \]
\end{lemma}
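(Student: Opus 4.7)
The plan is to adapt the strategy of Theorem \ref{thm:estWLip}, but to replace the generic empirical rate used there by the sharper \emph{discrete} rate from Lemma \ref{lem:discreteconcrete}, which is possible because both $\mu^\mathcal{M}$ and $\hat\mu^\mathcal{M}$ are supported on $\mathcal{M}$. First, since $\mu^\mathcal{M}\in\mathcal{P}_G(\X)$ (as noted just before Lemma \ref{lem:mu.M.kernels.Lipschitz}) and its kernels are $(2L+1)$-Lipschitz in $\W$ by Lemma \ref{lem:mu.M.kernels.Lipschitz}, while $\hat\mu^\mathcal{M}\in\mathcal{P}_G(\X)$ by the same argument applied to $\hat\mu^{b\mathcal{A}}$, I would apply Lemma \ref{lem:Wdecomp} with $L$ replaced by $2L+1$ to obtain
\[
\W(\mu^\mathcal{M},\hat\mu^\mathcal{M}) \leq \sum_{k=1}^K \tilde{M}_k \sum_{y\in\mathcal{M}_{\pa(k)}} \hat\mu^\mathcal{M}(\{y\})\,\W\!\left(\mu^\mathcal{M}(\cdot\mid y),\hat\mu^\mathcal{M}(\cdot\mid y)\right),
\]
where $\tilde{M}_k$ depends only on $G$ and $L$, and where we used that $\hat\mu^\mathcal{M}$ concentrates on $\mathcal{M}$.

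Next, for each $y\in\mathcal{M}_{\pa(k)}$, Lemma \ref{lem:M.conditional.iid} asserts that conditionally on $n\hat\mu^\mathcal{M}(\{y\})=m$, the random measure $\hat\mu^\mathcal{M}(\cdot\mid y)$ is distributed as the empirical measure of $\mu^\mathcal{M}(\cdot\mid y)$ with sample size $m$. Since $\mu^\mathcal{M}(\cdot\mid y)$ is supported on $\mathcal{M}_k$, Lemma \ref{lem:discreteconcrete} yields
\[
\mathbb{E}\!\left[\W\!\left(\mu^\mathcal{M}(\cdot\mid y),\hat\mu^\mathcal{M}(\cdot\mid y)\right)\,\Big|\,n\hat\mu^\mathcal{M}(\{y\})\right] \leq r_k\,(n\hat\mu^\mathcal{M}(\{y\}))^{-1/2},
\]
with $r_k=5$ if $d_k=1$, $r_k=2\eta$ if $d_k=2$, and $r_k=8\cdot 2^{\eta(d_k/2-1)}$ if $d_k\geq 3$ (and when $\hat\mu^\mathcal{M}(\{y\})=0$ the corresponding summand is trivially zero). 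Using the tower property, followed by Cauchy--Schwarz (or Jensen applied to $\sqrt{\cdot}$) $\sum_{y}\mathbb{E}[\sqrt{\hat\mu^\mathcal{M}(\{y\})}] \leq \sqrt{|\mathcal{M}_{\pa(k)}|}=2^{\eta d_{\pa(k)}/2}$, the expected inner sum associated with node $k$ is at most
\[
r_k\,n^{-1/2}\,2^{\eta d_{\pa(k)}/2} = r_k\,n^{-1/2}\,\delta_\mathcal{A}^{-d_{\pa(k)}/2}.
\]

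Substituting the three forms of $r_k$, one checks that in all three regimes this equals $C\,n^{-1/2}\,\delta_\mathcal{A}^{1-(\max\{2,d_k\}+d_{\pa(k)})/2}$, multiplied by an extra factor $\eta$ when $d_k=2$. Since by definition $\max\{2,d_k\}+d_{\pa(k)}\leq d_{\rm loc}$ and $\delta_\mathcal{A}\in(0,1]$, the non-logarithmic part is dominated by $C\,n^{-1/2}\delta_\mathcal{A}^{1-d_{\rm loc}/2}$. For the stray $\eta$-factor arising when $d_k=2$: if $\max\{2,d_k\}+d_{\pa(k)}<d_{\rm loc}$ there is a strict gap in the exponent, so $\eta\,\delta_\mathcal{A}^{\varepsilon}$ is uniformly bounded for any $\varepsilon>0$ (as $\log(1/\delta)\delta^{\varepsilon}$ is bounded on $(0,1]$) and can be absorbed into the constant; if instead $d_{\rm loc}$ is attained at some $k$ with $d_k=2$, the factor $\eta\sim\log(1/\delta_\mathcal{A})$ must be retained and contributes exactly the logarithm appearing in the statement. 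Summing the $K$ node-contributions and absorbing the constants $\tilde{M}_k$ into $C$ finishes the proof.

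The main obstacle is the case bookkeeping in the final step: one has to verify that each of the three regimes from Lemma \ref{lem:discreteconcrete} produces a contribution at most $n^{-1/2}\delta_\mathcal{A}^{1-d_{\rm loc}/2}$, and that the logarithmic factor survives \emph{precisely} when $d_{\rm loc}$ is realized at a node with $d_k=2$. All other ingredients---the Wasserstein decomposition, Lipschitz continuity of the kernels of $\mu^\mathcal{M}$, and the conditional i.i.d.\ structure of $\hat\mu^\mathcal{M}$---are direct consequences of the lemmas established earlier in the section.
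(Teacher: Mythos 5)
Your proof is correct and follows essentially the same route as the paper: decompose via Lemma \ref{lem:Wdecomp} using the $(2L+1)$-Lipschitz kernels from Lemma \ref{lem:mu.M.kernels.Lipschitz}, then bound each conditional term via Lemma \ref{lem:M.conditional.iid} and Lemma \ref{lem:discreteconcrete}, and sum up. Your Cauchy--Schwarz step $\sum_y\sqrt{\hat\mu^\mathcal{M}(\{y\})}\leq\sqrt{|\mathcal{M}_{\pa(k)}|}$ is just the $\bar d_k=2$ instance of the Jensen argument the paper reuses from Theorem \ref{thm:estWLip}, so the two are the same computation in disguise.
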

\begin{proof}
       By Lemma \ref{lem:mu.M.kernels.Lipschitz}, the kernels of $\mu^\mathcal{M}$ are $2L+1$-Lipschitz w.r.t.\ the Wasserstein distance.
       Hence, it follows from 
       Lemma \ref{lem:Wdecomp} (noting that Lipschitz continuity of the kernels of $\mu^{\mathcal{M}}$ only needs to hold $\mu^{\mathcal{M}}$-almost surely) that 
		\begin{align*}
		\W\left(\mu^\mathcal{M}, \hat{\mu}^\mathcal{M}\right) 
		&\leq C \sum_{k=1}^K \int  \W\left(\mu^\mathcal{M}(dx_k \mid x_{\pa(k)}), \hat{\mu}^\mathcal{M}(dx_k \mid x_{\pa(k)}) \right) \, \hat{\mu}^\mathcal{M}(dx) .
		\end{align*}

        To proceed further,  we fix $k$ and  distinguish between the cases $d_k\in\{1,2\}$ and $d_k\geq 3$, starting with the latter.
        An application of Lemma \ref{lem:M.conditional.iid} together with Lemma \ref{lem:discreteconcrete} shows that
        \begin{align*} 
        &\mathbb{E}\left[ \W\left(\mu^\mathcal{M}(dx_k \mid x_{\pa(k)}), \hat{\mu}^\mathcal{M}(dx_k \mid x_{\pa(k)}) \right) \mid   \hat{\mu}^\mathcal{M}(\{x_{\pa(k)}\})\right] 
        \leq  \frac{8 \cdot 2^{\eta (d_k/2 -1 )} }{( n \cdot \hat{\mu}^\mathcal{M}(\{x_{\pa(k)}\}) )^{1/2}} .
        \end{align*}
        Therefore, it follows from Jensen's inequality exactly as in the proof of Theorem \ref{thm:estWLip} that 
        \begin{align*} 
       ({\rm I}) &:=\mathbb{E}\left[\int  \W\left(\mu^\mathcal{M}(dx_k \mid x_{\pa(k)}), \hat{\mu}^\mathcal{M}(dx_k \mid x_{\pa(k)}) \right) \, \hat{\mu}^\mathcal{M}(dx) \right] \\
        &\leq   8 \cdot 2^{\eta (d_k/2 -1 )} \left( \frac{n}{ |\mathcal{M}_{\pa(k)}| } \right)^{-1/2}.
        \end{align*}
        
        Finally, since $|\mathcal{M}_{\pa(k)}|=\delta_\mathcal{A}^{-d_{\pa(k)}}$, it follows that
        \[ ({\rm I}) \leq  8 \delta_\mathcal{A}^{1-d_k/2} \delta_\mathcal{A}^{-d_{\pa(k)}/2} n^{-1/2}
        \leq 8\delta_\mathcal{A}^{1- d_{\rm loc}/2} n^{-1/2},\]
        where the second inequality follows from the definition of $d_{\rm loc}$.

       Next consider the case that $d_k\in\{1,2\}$.
       Here the same set of arguments as used when $d_k\geq 3$ show  that
        \[ ({\rm I}) \leq C
    \begin{cases}
         n^{-1/2}  \delta_\mathcal{A}^{-d_{\pa(k)} /2} = n^{-1/2} \delta_\mathcal{A}^{1-\frac{d_{\pa(k)}+2}{2}} \leq n^{-1/2} \delta_\mathcal{A}^{1-d_{\rm loc}/2} & \text{if } d_k=1,\\
         n^{-1/2} \log(\frac{1}{\delta_{\mathcal{A}}}) \delta_\mathcal{A}^{-d_{\pa(k)} /2} \leq n^{-1/2} \log(\frac{1}{\delta_{\mathcal{A}}}) \delta_{\mathcal{A}}^{1-d_{\rm loc}/2} \ & \text{if } d_k=2.
    \end{cases}\]
        We note that, if the final inequality in the case $d_k = 2$ is strict (that is, if $2 + d_{\pa(k)} < d_{\rm loc}$), then we can omit the $\log(\frac{1}{\delta_{\mathcal{A}}})$ term by possibly increasing the constant $C$ (cf.~the proof of Theorem \ref{thm:estWLip}). 
        Taking the maximum over all nodes $k$ of the derived inequalities yields the claim. 
\end{proof}

\begin{definition}
    For every  $f\in\mathcal{F}$ and $\nu\in\mathcal{P}(\X)$, define $f^\nu : \mathcal{M} \rightarrow \mathbb{R}$ via 
        \[f^{\nu}(x) := \frac{1}{ ( \otimes_{k=1}^K\nu_{k})(c(x)) }\int \eins_{c(x)}(y) f(y) \, (\otimes_{k=1}^K\nu_{k})(dy),\]
       with  the convention \mbox{$f^{\nu}(x) = 0$ if $(\otimes_{k=1}^K\nu_k)(c(x))=0$} (but this case will never be relevant in the analysis below).
\end{definition}

        Since $\sup_{x'\in c(x), y'\in c(y)} \|x'-y'\| \leq 2 \|x-y\|$ for any distinct $x,y\in\mathcal{M}$, it follows that $f^{\mu}$ is $2$-Lipschitz.
        Moreover, $f^\nu$ is constant in every cell $c\in\mathcal{A}$ and therefore can be seen in duality with the definition of $\nu^\mathcal{M}$ (see Lemma \ref{lem:average.on.cells}).
        As such, it will allow us to control $\W(\mu^{b\mathcal{A}},\hat\mu^{b\mathcal{A}})$ in terms of $\W(\mu^\mathcal{M},\hat\mu^\mathcal{M})$ plus an error that only depends on the marginals which is the subject of Lemma \ref{lem:E.sup.fmu.minus.fhatmu}.

\begin{lemma}
\label{lem:average.on.cells}
    For every $f\in\mathcal{F}$ and every $\nu\in\mathcal{P}(\X)$, 
	\[
		\int f \,d\nu^{b\mathcal{A}} = \int f^\nu \,d\nu^\mathcal{M}.
		\]
\end{lemma}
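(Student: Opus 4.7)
The plan is to decompose the left-hand side cell-by-cell and exploit that, on each cell, $\nu^{b\mathcal{A}}$ becomes a scaled product of the marginal conditionals.

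First I would partition the integration: write $\int f\,d\nu^{b\mathcal{A}} = \sum_{c\in\mathcal{A}} \int_c f\,d\nu^{b\mathcal{A}}$, where $c = c_1\times\cdots\times c_K$ ranges over all cells of $\mathcal{A}$. The key observation—and the main technical step—is that on each cell $c$ the measure $\nu^{b\mathcal{A}}$ collapses to
\[
\nu^{b\mathcal{A}}\big|_c \;=\; \nu^{b\mathcal{A}}(c)\,\bigotimes_{k=1}^K \nu_{k\mid c_k}.
\]
To see this, fix $x\in c$ and look at each factor $\nu^{b\mathcal{A}}(dx_k\mid x_{\pa(k)}) = \sum_{\tilde c_k\in\mathcal{A}_k} \nu(\tilde c_k\mid c_{\pa(k)}(x_{\pa(k)}))\,\nu_{k\mid \tilde c_k}(dx_k)$. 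Since $\nu_{k\mid\tilde c_k}$ is supported on $\tilde c_k$ and we restrict to $x_k\in c_k$, only the summand with $\tilde c_k=c_k$ survives, giving $\nu(c_k\mid c_{\pa(k)})\,\nu_{k\mid c_k}(dx_k)$. Multiplying over $k$ and recognising that $\prod_k \nu(c_k\mid c_{\pa(k)}) = \nu^{b\mathcal{A}}(c)$ (e.g.\ by an easy induction analogous to the one used in the proof of Lemma~\ref{lem:mubAwelldefined}, noting that this identity holds for cells without needing Assumption~\ref{ass:graph_struc}) yields the displayed product formula.

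Next I would rewrite the product $\bigotimes_k \nu_{k\mid c_k}$ in terms of $\bigotimes_k \nu_k$: on rectangles $A=A_1\times\cdots\times A_K\subseteq c$ both sides factor, and one obtains
\[
\bigotimes_{k=1}^K \nu_{k\mid c_k} \;=\; \frac{(\otimes_{k=1}^K\nu_k)\big|_c}{(\otimes_{k=1}^K\nu_k)(c)}.
\]
Combining this with the previous step,
\[
\int_c f\,d\nu^{b\mathcal{A}} \;=\; \nu^{b\mathcal{A}}(c)\cdot\frac{1}{(\otimes_{k}\nu_k)(c)}\int \eins_c(y) f(y)\,(\otimes_{k}\nu_k)(dy) \;=\; \nu^{b\mathcal{A}}(c)\,f^{\nu}(m_c),
\]
where $m_c$ denotes the midpoint of $c$ (the element of $\mathcal{M}\cap c$) and we used the very definition of $f^{\nu}$, which is constant on each cell.

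Finally, by the definition $\nu^{\mathcal{M}}(\{m_c\}) = \nu^{b\mathcal{A}}(c)$, summing over $c\in\mathcal{A}$ yields
\[
\int f\,d\nu^{b\mathcal{A}} \;=\; \sum_{c\in\mathcal{A}} \nu^{\mathcal{M}}(\{m_c\})\,f^{\nu}(m_c) \;=\; \int f^{\nu}\,d\nu^{\mathcal{M}},
\]
as claimed. There is no real obstacle here beyond the cell-restriction identity in the first step, which is the only place where the particular structure of $\nu^{b\mathcal{A}}$ (a local product of $\nu_{k\mid c_k}$'s, weighted by conditional probabilities of the graph) is actually exploited; once this is in place the rest is bookkeeping.
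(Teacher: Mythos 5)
Your proof is correct, and it takes a genuinely different route from the paper's formal appendix argument. The paper establishes the identity by a coordinate-wise telescoping induction: it introduces intermediate kernels $R_k(x_{k:K}, d\tilde x_{k:K}) = \nu_{k\mid c_k(x_k)}(d\tilde x_k)\cdots\nu_{K\mid c_K(x_K)}(d\tilde x_K)$, observes $f^\nu(x) = \int f\,dR_1(x,\cdot)$, and then shows that one can ``shift'' the kernel index from $k$ to $k+1$ one step at a time, passing from the expression $\int f^\nu\,d\nu^\mathcal{M}$ (at $k=1$) to $\int f\,d\nu^{b\mathcal{A}}$ (at $k=K$). Your argument instead decomposes the integral cell-by-cell and uses, as its single key step, that $\nu^{b\mathcal{A}}$ restricted to a cell $c$ is $\nu^{b\mathcal{A}}(c)\,\bigotimes_k\nu_{k\mid c_k}$ --- a local-product factorization that generalizes \eqref{eq:mu.as.local.prod} from $K=2$ to arbitrary $K$ --- after which the identification with $f^\nu$ and $\nu^\mathcal{M}$ is bookkeeping. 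Interestingly, this is precisely the informal two-node sketch the paper gives in the text preceding the lemma, so the two arguments are the same in spirit; the paper seems to have switched to the telescoping formulation in the formal proof, perhaps to avoid spelling out the general-$K$ factorization. Your execution is cleaner and avoids the induction; the paper's version arguably makes the interplay between the forward and backward discretizations a bit more visible. One point worth checking carefully (you handle it correctly): the identity $\prod_k\nu(c_k\mid c_{\pa(k)}) = \nu^{b\mathcal{A}}(c)$ follows by iterated integration and the first assertion of Lemma~\ref{lem:mubAwelldefined}, without invoking Assumption~\ref{ass:graph_struc}, since the kernels $\nu^{b\mathcal{A}}(dx_k\mid x_{\pa(k)})$ are constant on each cell $c_{\pa(k)}$ by construction; only the second (fully-connected-set) part of that lemma needs the assumption.
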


The proof of this lemma in case that $K=2$ follows essentially from the definitions.
Indeed, for each fixed cell we have that: $\nu^\mathcal{M}$ is the projection of $\nu^{b\mathcal{A}}$ to the cell's midpoint,  $\nu^{b\mathcal{A}}$ itself is the  (weighted) product measure between its marginals (see \eqref{eq:mu.as.local.prod}),  and $f^\nu$ is constant and equal to the average on that cell w.r.t.\ said product measure.
We defer a rigorous proof to Appendix \ref{app:TV}.

\begin{lemma}
\label{lem:E.sup.fmu.minus.fhatmu}
There exists a constant $C$ depending only on $G$ and $L$ such that
         \begin{align*}
        \mathbb{E}\left[ \sup_{f\in\mathcal{F}} \int (f^\mu - f^{\hat{\mu}}) \,d\hat{\mu}^\mathcal{M} \right]
        &\leq   C \begin{cases}
        n^{-1/2}\delta_\mathcal{A}^{1/2} &\text{if } \max_{k=1, \dots, K} d_k = 1,\\
        \max\{\log(n),1\} n^{-1/2} &\text{if } \max_{k=1, \dots, K} d_k  = 2,\\
        n^{-1/d_{\rm max}} & \text{else}.
        \end{cases}
        \end{align*}
\end{lemma}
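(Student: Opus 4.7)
The plan is to reduce the supremum over $f\in\mathcal{F}$ to a cell-by-cell sum, upper-bound the per-cell supremum by a Wasserstein distance between two product measures on that cell (via Kantorovich-Rubinstein and a product coupling), and then apply the classical empirical Wasserstein rate on each node separately, summing via Jensen's inequality exactly as in the proof of Theorem~\ref{thm:estWLip}.

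\textbf{Step 1 (reduction to cells).} By Lemma~\ref{lem:average.on.cells} applied to $\hat\mu$, $\int f^{\hat\mu}\,d\hat\mu^{\mathcal{M}} = \int f\,d\hat\mu^{b\mathcal{A}}$. Since $f^\mu$ is constant on each cell of $\mathcal{A}$ (it depends only on $\mu$, not on the samples) and $\hat\mu^{\mathcal{M}}(\{m\})=\hat\mu^{b\mathcal{A}}(c(m))$, also $\int f^\mu\,d\hat\mu^{\mathcal{M}} = \int f^\mu\,d\hat\mu^{b\mathcal{A}}$. Using that on each cell $c$ the restriction and renormalization of $\hat\mu^{b\mathcal{A}}$ equals the product measure $\bigotimes_{k=1}^K \hat\mu_{k|c_k}$ (as in \eqref{eq:mu.as.local.prod}), one obtains
\[
\int (f^\mu - f^{\hat\mu})\,d\hat\mu^{\mathcal{M}} = \sum_{c\in\mathcal{A}} \hat\mu^{b\mathcal{A}}(c)\Bigl[\int f\,d\bigotimes_{k=1}^K\mu_{k|c_k} - \int f\,d\bigotimes_{k=1}^K\hat\mu_{k|c_k}\Bigr].
\]

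\textbf{Step 2 (Kantorovich-Rubinstein and product coupling).} For fixed $c$, any $f\in\mathcal{F}$ restricted to $c$ is $1$-Lipschitz, so combining \eqref{eq:W.dual.rep} with the product-coupling bound $\W(\bigotimes_k\nu_k,\bigotimes_k\tilde\nu_k)\leq\sum_k\W(\nu_k,\tilde\nu_k)$ (valid for the $\infty$-norm cost, since $\max_k a_k\leq\sum_k a_k$ for $a_k\geq 0$) yields
\[
\sup_{f\in\mathcal{F}}\Bigl|\int f\,d\Bigl(\bigotimes_{k=1}^K\mu_{k|c_k}-\bigotimes_{k=1}^K\hat\mu_{k|c_k}\Bigr)\Bigr| \leq \sum_{k=1}^K \W(\mu_{k|c_k}, \hat\mu_{k|c_k}).
\]
Pulling the supremum inside the non-negative-weighted cell sum, and using Lemma~\ref{lem:mubAwelldefined} applied with $I=\{k\}$ (so the $k$-th marginal of $\hat\mu^{b\mathcal{A}}$ equals $\hat\mu_k$),
\[
\sup_{f\in\mathcal{F}}\int(f^\mu-f^{\hat\mu})\,d\hat\mu^{\mathcal{M}} \leq \sum_{k=1}^K\sum_{c_k\in\mathcal{A}_k}\hat\mu(c_k)\,\W(\mu_{k|c_k},\hat\mu_{k|c_k}).
\]

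\textbf{Step 3 (empirical rate and summation).} Conditionally on $n\hat\mu(c_k)=m$, the random measure $\hat\mu_{k|c_k}$ is distributed as the empirical measure of $\mu_{k|c_k}$ with $m$ samples (a standard conditioning argument, parallel to the proof of Lemma~\ref{lem:conditional.iid} but applied to the $k$-th marginal instead of to a kernel). Since $c_k$ is a cube of side $\delta_{\mathcal{A}}$, scaling \eqref{eq:classical.convergence.wasserstein.in.proof} gives, for $m\geq 1$, $\mathbb{E}[\W(\mu_{k|c_k},\hat\mu_{k|c_k})\mid n\hat\mu(c_k)=m]\leq 8\,\delta_{\mathcal{A}}\,l_m(d_k)\,m^{-1/\bar d_k}$ (the $m=0$ term is killed by the factor $\hat\mu(c_k)=0$). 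Taking expectations and applying Jensen's inequality exactly as in Step~2 of the proof of Theorem~\ref{thm:estWLip},
\[
\sum_{c_k\in\mathcal{A}_k}\mathbb{E}\bigl[\hat\mu(c_k)\,\W(\mu_{k|c_k},\hat\mu_{k|c_k})\bigr] \leq 8\,l_n(d_k)\,n^{-1/\bar d_k}\,\delta_{\mathcal{A}}^{1-d_k/\bar d_k},
\]
which equals $n^{-1/2}\delta_{\mathcal{A}}^{1/2}$, $\log(n)\,n^{-1/2}$, or $n^{-1/d_k}$ according to whether $d_k=1$, $d_k=2$, or $d_k\geq 3$. Summing over $k$ (and absorbing dominated terms when $d_{\rm max}\geq 3$) yields the stated bound.

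The main obstacle is Steps~1-2: the key insight is that the per-cell difference $f^\mu(c)-f^{\hat\mu}(c)$ is precisely a Wasserstein distance between two \emph{product} measures on $c$, which then decouples the problem across the $K$ nodes via the product-coupling bound. Once this reduction is in place, Step~3 is a routine empirical-Wasserstein estimate, structurally identical to the final steps of the proof of Theorem~\ref{thm:estWLip}.
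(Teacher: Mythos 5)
Your proposal is correct and follows essentially the same route as the paper's proof: the same cell-by-cell decomposition of $\int (f^\mu - f^{\hat\mu})\,d\hat\mu^{\mathcal M}$ into Wasserstein distances between the local product measures $\otimes_k \mu_{k|c_k}$ and $\otimes_k \hat\mu_{k|c_k}$, the same product-coupling bound $\W(\otimes_k\nu_k,\otimes_k\tilde\nu_k)\leq\sum_k\W(\nu_k,\tilde\nu_k)$, and the same conditioning-plus-Jensen argument with the rescaled empirical rate \eqref{eq:classical.convergence.wasserstein.in.proof}, exactly as in Step 2 of the proof of Theorem~\ref{thm:estWLip}. The only cosmetic difference is that you obtain the cell decomposition via Lemma~\ref{lem:average.on.cells}, whereas the paper reads it off directly from the definitions of $f^\mu$, $f^{\hat\mu}$ and $\hat\mu^{\mathcal M}$.
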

\begin{proof}
        By the definitions of $f^\mu,f^{\hat{\mu}}$ and $\hat{\mu}^\mathcal{M}$, for any $f\in\mathcal{F}$,
		\begin{align*}
		\int (f^\mu - f^{\hat{\mu}}) \,d\hat{\mu}^\mathcal{M} 
        &= \sum_{ c\in \mathcal{A}} \hat{\mu}^{\mathcal{M}}(c) \int f \,d\big( (\otimes_{k=1}^K\mu_{k|c_k}) - (\otimes_{k=1}^K \hat\mu_{k|c_k}) \big) \\
        &\leq \sum_{ c\in \mathcal{A}} \hat{\mu}^{\mathcal{M}}(c)  \W\left( \otimes_{k=1}^K\mu_{k|c_k}, \otimes_{k=1}^K \hat\mu_{k|c_k} \right).
        \end{align*}
        Using a basic coupling argument, one can readily verify that 
        \[\W\left( \otimes_{k=1}^K\mu_{k|c_k}, \otimes_{k=1}^K \hat\mu_{k|c_k} \right)
        \leq \sum_{k=1}^K\W\left( \mu_{k|c_k},  \hat\mu_{k|c_k} \right).\]
        Finally, since $\hat{\mu}^{\mathcal{M}}(c_k)=\hat{\mu}(c_k)$ by definition and Lemma \ref{lem:mubAwelldefined},
        \begin{align*}
        \sup_{f\in\mathcal{F}} \int (f^\mu - f^{\hat{\mu}}) \,d\hat{\mu}^\mathcal{M} 
        &\leq \sum_{k = 1}^K \sum_{c_k \in \mathcal{A}_k} \hat\mu^{\mathcal{M}}(c_k) \W(\mu_{k\mid c_k}, \hat\mu_{k\mid c_k})
        \\
        &=  \sum_{k=1}^K \sum_{c_k\in\mathcal{A}_k} \hat\mu(c_k)\W\left( \mu_{k|c_k},  \hat\mu_{k|c_k} \right).
        \end{align*}

        Fix $1\leq k\leq K$. 
        In order to estimate $\W( \mu_{k|c_k},  \hat\mu_{k|c_k})$ first note that $\hat{\mu}_k$ is the empirical measure of $\mu_k$.
        Moreover, similarly to the proof of Lemma \ref{lem:M.conditional.iid} (in fact, simpler), one may verify that for every $m=1,\dots, n$, conditionally on the event that $n\hat\mu_{k}(c_k)=m$, the  random measure $\hat\mu_{k|c_k}$ has the same distribution as the empirical measure of $\mu_{k|c_k}$ with $m$ samples.
        Therefore, it follows from \eqref{eq:classical.convergence.wasserstein.in.proof} that
        \begin{align}
            \label{eq:marginals.conditions.W}
             \mathbb{E}\left[ \W( \mu_{k|c_k},  \hat\mu_{k|c_k})  \mid \hat\mu_{k}(c_k) = m \right]
        \leq \delta_\mathcal{A} l_m(d_k) m^{-1/\max\{2,d_k\}}.
        \end{align}
    The $\delta_\mathcal{A}$ factor arises because $\mu_{k \mid c_k}$ is supported on the cell $c_k$, which is a $\delta_\mathcal{A}$-scaled translate of $[0,1]^{d_k}$. Since the Wasserstein distance is homogeneous under rescaling of the domain, this scaling introduces the $\delta_\mathcal{A}$ term.

        Using \eqref{eq:marginals.conditions.W} and repeating the exact same steps as used in the proof of Theorem \ref{thm:estWLip}, it follows that 
        \[  \mathbb{E}\left[  \sum_{c_k\in\mathcal{A}_k} \hat\mu(c_k)\W\left( \mu_{k|c_k},  \hat\mu_{k|c_k} \right) \right]
        \leq \delta_\mathcal{A} l_n(d_k) \left( \frac{ n }{|\mathcal{A}_k| } \right)^{-1/\max\{2,d_k\}}  
        =:({\rm I}).   \]
        Finally, recall that $|\mathcal{A}_k| =\delta_\mathcal{A}^{-d_k}$, thus
        \[
        ({\rm I})  = l_n(d_k)\begin{cases}
         n^{-1/2} \delta_\mathcal{A}^{1/2} &\text{if } d_k=1,\\
         n^{-1/2}  &\text{if } d_k=2,\\
         n^{-1/d_k} &\text{else}.
        \end{cases}\]
        The claim of the lemma readily follows.
\end{proof}

\subsection{Proof of Theorem \ref{thm:estTV}}\label{subsec:proofoftvlipthm}

By the triangle inequality,
		\begin{align*}
		\W(\mu, \hat{\mu}^{b\mathcal{A}}) 
        \leq \W(\mu, \mu^{b\mathcal{A}}) + \W(\mu^{b\mathcal{A}}, \hat{\mu}^{b\mathcal{A}})
		\end{align*}
        and by Lemma \ref{lem:TVboundbias},  $\W(\mu, \mu^{b\mathcal{A}})\leq ((K-1)2L+2C) \delta_\mathcal{A}^2$.
        Next, we claim that 
        \begin{align}
        \label{eq:w.split.another}
            \W(\mu^{b\mathcal{A}}, \hat{\mu}^{b\mathcal{A}}) 
        \leq 2 \cdot \W(\mu^{\mathcal{M}},\hat{\mu}^\mathcal{M}) + \sup_{f\in\mathcal{F}} \int (f^\mu - f^{\hat{\mu}}) \,d\hat{\mu}^\mathcal{M} .
        \end{align}
        Indeed, this follows from the dual representation of the Wasserstein distance (see \eqref{eq:W.dual.rep}),  a twofold application of  Lemma \ref{lem:average.on.cells} which shows that, for any $f\in\mathcal{F}$, 
        \begin{align*}
		\int f \,d(\mu^{b\mathcal{A}} - \hat{\mu}^{b\mathcal{A}})
        &= \int f^\mu \,d(\mu^{\mathcal{M}} - \hat{\mu}^\mathcal{M}) + \int (f^\mu - f^{\hat{\mu}}) \,d\hat{\mu}^\mathcal{M},
		\end{align*}
        and the first term on the right hand side is upper bounded by $2 \cdot \W(\mu^{\mathcal{M}},\hat{\mu}^\mathcal{M})$ because $f^\mu$ is $2$-Lipschitz.
        
    Next recall that by Lemma \ref{lem:W.mu.M.hat.mu.M}, 
       \begin{align}\begin{split}
        \label{eq:final.proof.estimate.2} &\mathbb{E}\left[\W\left( \mu^{\mathcal{M}},\hat{\mu}^\mathcal{M} \right)\right]\\
        &\leq ({\rm I}):=
          C\cdot  n^{-1/2} \cdot
    \begin{cases}
        \delta_\mathcal{A}^{1-d_{\rm loc}/2} & \text{if } d_{\rm loc} \text{ is not attained at } d_k=2,\\
       \log(\frac{1}{\delta_{\mathcal{A}}}) \delta_\mathcal{A}^{1-d_{\rm loc}/2} & \text{else}.
    \end{cases}\end{split}
        \end{align}
        and by Lemma \ref{lem:E.sup.fmu.minus.fhatmu},
        \begin{align}
        \label{eq:final.proof.estimate.3}
        \mathbb{E}\left[ \sup_{f\in\mathcal{F}} \int (f^\mu - f^{\hat{\mu}}) \,d\hat{\mu}^\mathcal{M} \right]
        \leq ({\rm II})
        := C \begin{cases}
        n^{-1/2}\delta_\mathcal{A}^{1/2} &\text{if } \max_{k=1, \dots, K} d_k = 1,\\
      \max\{  \log(n),1\} n^{-1/2} &\text{if } \max_{k=1, \dots, K} d_k  = 2,\\
        n^{-1/d_{\rm max}} & \text{else}.
        \end{cases}
        \end{align}

        Collecting all terms shows that 
        \[ \mathbb{E}[ \W(\mu, \hat{\mu}^{b\mathcal{A}}) ]  
        \leq C \left( \delta_\mathcal{A}^2 + ({\rm I}) +({\rm II})\right) .\]
        Hereby, it is straightforward to see that $({\rm I})$ is bounded by the term $l_n n^{-1/d_{\rm max}}$ in the statement of the Theorem irrespective of $\delta_{\mathcal{A}}$. 

        To treat the terms $\delta_{\mathcal{A}}^2$ and $({\rm I})$, recall that $\eta$ is chosen as the largest integer satisfying $\eta\leq  \frac{\log_2(n)}{2+d_{\rm loc}}$ and $\delta_{\mathcal{A}}=2^{-\eta}$; in particular 
        \[   n^{-1/(2+d_{\rm loc})} \leq \delta_\mathcal{A}  \leq 2 n^{-1/(2+d_{\rm loc})}.\]
        and hence $\delta_{\mathcal{A}}^2 \leq 4 n^{-2/(2+d_{\rm loc})}$ as required. 
        
        Regarding $({\rm I})$, we have
        \[\delta_\mathcal{A}^{1-d_{\rm loc}/2} n^{-1/2} 
        \leq \left(n^{-1/(2+d_{\rm loc})}\right)^{1-d_{\rm loc}/2} n^{-1/2} = n^{-2/(2+d_{\rm loc})}\]
        and
        \[
        \log\left(\frac{1}{\delta_{\mathcal{A}}}\right) \leq \frac{\log(n)}{2+d_{\rm loc}} \leq \log(n),
        \]
        which is thus also as required in the term $l_n n^{-2/(2+d_{\rm loc})}$ as given in the Theorem. 
        
        Finally, noting that the log factor in $l_n$ is only relevant for the dominant term of $({\rm I})$ and $({\rm II})$, the claimed form of $l_n$ follows, completing the proof. \qed

\subsection{Proof of Theorem \ref{thm:main.TV.adaptive}}
\label{sec:tv.adaptive}
Assume without loss of generality that the sample size is $2n$ and split the sample into two independent halves
$
    \mathcal{S} = (X^1,\ldots,X^n) $ and $
    \mathcal{T} = (X^{n+1},\ldots,X^{2n})$.
Let $\hat\mu$ be the empirical measure of $\mathcal{S}$ and let $\hat\mu'$ be the empirical measure of $\mathcal{T}$.

 \vspace{0.5em}
            \noindent
\emph{Step 1:}
For each $G \in \mathcal{G}$, let $\hat\nu_G$ be the estimator from Definition \ref{def:mu.ba}, constructed from $\hat\mu$ with the bandwidth prescribed for the graph $G$, that is, $\eta=\eta(G)     =
     \lfloor
     \frac{\log_2 n}{2+d_{\rm loc}(G)}
     \rfloor$.
     Since $\mathcal{G}$ is a subset of the set of all directed graphs on $K$ vertices,
\begin{align}\label{eq:Nbound}
    \log (|\mathcal{G}|) \;\leq\; K^2 \log 2 .
\end{align}

For every pair $G,H \in \mathcal{G}$, choose a maximizer $f^*_{G,H}\in\mathcal{F}$ in the dual representation \eqref{eq:W.dual.rep} for $\W(\hat\nu_G,\hat\nu_H)$, normalized so that $f^*_{G,H}(0)=0$. If $G=H$, take $f^*_{G,G}=0$. Define
\[
    \hat{\mathcal{F}}
    :=
    \{f^*_{G,H}:\, G,H \in \mathcal{G}\}
    \subset \mathcal{F}.
\]
Then $|\hat{\mathcal{F}}| \leq |\mathcal{G}|^2$, and $\hat{\mathcal{F}}$ is $\mathcal{S}$-measurable.

For probability measures $\alpha,\beta$, define a (random, $\mathcal{S}$-measurable) pseudo-distance
\[
    D(\alpha,\beta)
    :=
    \sup_{f\in\hat{\mathcal{F}}}
    \abs{\int f\,d(\alpha-\beta)} .
\]
Select
\[
     G^\ast
    \in
    \mathop{\rm argmin}_{H\in\mathcal{G}}
    D(\hat\mu',\hat\nu_H),
\]
and set $\hat\mu^{b\mathcal{A},ad}:= \hat\nu_{G^\ast}$ to be the estimator.

 \vspace{0.5em}
            \noindent
\emph{Step 2:}
Fix an arbitrary $H\in\mathcal{G}$. Since $f^*_{ G^\ast,H}\in\hat{\mathcal{F}}$ and
\[
    \W(\hat\mu^{b\mathcal{A},ad},\hat\nu_H)
    =
    \int f^*_{G^\ast,H}\,d(\hat\mu^{b\mathcal{A},ad}-\hat\nu_H),
\]
we have
\begin{align*}
    \W(\hat\mu^{b\mathcal{A},ad},\hat\nu_H)
    &=
    \int f^*_{ G^\ast,H}\,d(\hat\mu^{b\mathcal{A},ad}-\hat\mu')
    +
    \int f^*_{G^\ast,H}\,d(\hat\mu'-\hat\nu_H)  \\
    &\leq
    D(\hat\mu',\hat\mu^{b\mathcal{A},ad})
    +
    D(\hat\mu',\hat\nu_H)  \\
    &\leq
    2D(\hat\mu',\hat\nu_H),
\end{align*}
where the last inequality follows from the definition of $ G^\ast$ and $\hat\mu^{b\mathcal{A},ad}$. Hence, by the triangle inequality,
\begin{align}\label{eq:tournament}
    \W(\hat\mu^{b\mathcal{A},ad},\mu)
    \leq
    2D(\hat\mu',\hat\nu_H)
    +
    \W(\hat\nu_H,\mu).
\end{align}
Moreover, every $f\in\hat{\mathcal{F}}$ is $1$-Lipschitz, so by \eqref{eq:W.dual.rep},
$
   \abs{\int f\,d(\mu-\hat\nu_H)}
    \leq
    \W(\mu,\hat\nu_H)
$
and therefore
\[
    D(\hat\mu',\hat\nu_H)
    \leq
    \Delta
    +
    \W(\mu,\hat\nu_H),
    \qquad
    \Delta
    :=
    \sup_{f\in\hat{\mathcal{F}}}
    \abs{\int f\,d(\hat\mu'-\mu)} .
\]
Plugging this into \eqref{eq:tournament} gives
\begin{align}\label{eq:oracle}
    \W(\hat\mu^{b\mathcal{A},ad},\mu)
    \leq
    3\W(\hat\nu_H,\mu)
    +
    2\Delta .
\end{align}

 \vspace{0.5em}
            \noindent
\emph{Step 3:}
Now fix the true graph $G\in\mathcal{G}$ of $\mu$ and take $H=G$ in \eqref{eq:oracle}.
By construction, 
Theorem \ref{thm:estTV} gives that
\begin{align}\label{eq:bias}
    \E{\W(\hat\nu_G,\mu)}
    \leq
    C_1 \cdot \max\{\log(n),1\}
    \cdot
    \brak{
        n^{-2/(2+d_{\rm loc}(G))}
        +
        n^{-1/d_{\rm max}}
    },
\end{align}
where $C_1$ depends only on $L$, $K$, and $(d_k)_{k=1}^K$. Indeed, any graph-dependent constant can be absorbed into such a constant by taking the maximum over the finite collection $\mathcal{G}$.

Conditional on $\mathcal{S}$, the class $\hat{\mathcal{F}}$ is deterministic and has cardinality at most $|\mathcal{G}|^2$. Moreover, every $f\in\hat{\mathcal{F}}$ is $1$-Lipschitz and satisfies $f(0)=0$, hence $|f|\leq 1$ on $\mathcal{X}=[0,1]^d$.

For fixed $f\in\hat{\mathcal{F}}$, conditionally on $\mathcal{S}$
\[
    \int f\,d(\hat\mu'-\mu)
    =
    \frac1n \sum_{i=n+1}^{2n}
    \brak{f(X^i)-\E{f(X^i) \mid \mathcal{S}}}
\]
is centered and $C n^{-1/2}$-sub-Gaussian by Hoeffding's lemma. Therefore, by the standard maximal inequality for finite sub-Gaussian families, see for example \cite[Section 2.5]{vershynin2018high},
\begin{align}\label{eq:variance}
    \E{\Delta \mid \mathcal{S}}
    \leq
    C_2
    \sqrt{
        \frac{\log(|\hat{\mathcal{F}}|+1)}{n}
    }
    \leq
    C_2
    \sqrt{
        \frac{\log(|\mathcal{G}|^2+1)}{n}
    }
    \leq
    \frac{C_3 K }{\sqrt n},
\end{align}
where $C_2,C_3$ are absolute constants and the last inequality uses \eqref{eq:Nbound}.

Taking expectations in \eqref{eq:oracle} with $H=G$, and combining \eqref{eq:bias} and \eqref{eq:variance}, gives
\[
    \E{\W(\hat\mu^{b\mathcal{A},ad},\mu)}
    \leq
    3C_1 \cdot \max\{\log(n),1\}
    \cdot
    \brak{
        n^{-2/(2+d_{\rm loc}(G))}
        +
        n^{-1/d_{\rm max}}
    }
    +
    2C_3 K n^{-1/2}.
\]
Since $d_{\rm loc}(G)\geq 2$ and $d_{\rm max}\geq 2$, both $n^{-2/(2+d_{\rm loc}(G))}$ and  $n^{-1/d_{\rm max}} $
are at least of order $n^{-1/2}$. Thus the variance term is absorbed into the preceding display after increasing the constant. 
This completes the proof. \qed 

\subsection{Remaining proofs}
\label{sec:remaining.proofs.TV}

\begin{proof}[Proof of Lemma~\ref{lem:char.TV}]
We first prove that \((ii)\) implies $(i)$.  
If \(\mu_{I,J}\ll \mu_I\otimes\mu_J\), then a version of the conditional law of
\(X_I\) given \(X_J=x_J\) is given by
\[
    \mu(dx_I\mid x_J)
    =
    f_{I,J}(x_I,x_J)\,\mu_I(dx_I);
\]
this follows e.g.\ by checking $ \mu_{I,J}(A\times B) = \int_B
        \int_A f_{I,J}(x_I,x_J)\,\mu_I(dx_I)
        \mu_J(dx_J)$ for measurable \(A\subseteq\X_I\) and \(B\subseteq\X_J\).
For \(x_J,\tilde x_J\in \X_J\), we then have
\[
\begin{aligned}
\TV\left(\mu(dx_I\mid x_J),\mu(dx_I\mid \tilde x_J)\right)  
& =
\sup_{\|\varphi\|_{L^\infty}\leq 1/2} \left|\int \left(  f_{I,J}(x_I,x_J)-f_{I,J}(x_I,\tilde x_J) \right) \varphi(x_I) \, \mu_I(dx_I) \right|
\\
&=      
\frac12
\left\|
    f_{I,J}(\cdot,x_J)-f_{I,J}(\cdot,\tilde x_J)
\right\|_{L^1(\mu_I)} \\
&\leq
L\|x_J-\tilde x_J\|.
\end{aligned}
\]
This proves \((i)\).

Conversely, assume \((i)\), i.e.\ that $x_J\mapsto \mu(dx_I\mid x_J)$ has a Lipschitz version.  
First note that for every measurable \(A\subseteq\X_I\) with \(\mu_I(A)=0\),
\[
    0=\mu_I(A)=\int  \mu(A\mid x_J)\,\mu_J(dx_J),
\]
so \( \mu(A\mid x_J)=0\) for \(\mu_J\)-a.e. \(x_J\).  
By continuity and the definition of the support, it follows that \( \mu(A\mid x_J)=0\) for all $x_J$ in the support of $\mu_J$.

By the Radon--Nikodym theorem for dominated kernels, there is a jointly measurable density \(f_{I,J}\) such that
\[
     \mu(dx_I\mid x_J) =f_{I,J}(x_I,x_J)\,\mu_I(dx_I)
\]
for all $x_J$ in the support of $\mu_J$.
Consequently,
\[
    \mu_{I,J}(dx_I,dx_J)
    =
    f_{I,J}(x_I,x_J)\,\mu_I(dx_I)\mu_J(dx_J)
\]
and hence \(\frac{d\mu_{I,J}}{d(\mu_I\otimes\mu_J)} = f_{I,J}\). 
Moreover, for $x_J,\tilde x_J$,
\[
\begin{aligned}
\frac12
\left\|
    f_{I,J}(\cdot,x_J)-f_{I,J}(\cdot,\tilde x_J)
\right\|_{L^1(\mu_I)}
&=
\TV\bigl( \mu(\cdot \mid x_J),\mu(\cdot \mid \tilde x_J)\bigr) \\
&\leq L\|x_J-\tilde x_J\|.
\end{aligned}
\]
Hence the section map is \(2L\)-Lipschitz in \(L^1(\mu_I)\). This proves
\((ii)\).
\end{proof}

\begin{proof}[Proof of Corollary~\ref{cor:leb-density-tvlip}]
Observe that 
\[ \frac{d\mu_{I,J} }{d(\mu_I\otimes\mu_J)} 
= \frac{d\mu_{I,J} }{dx_{I,J}} \frac{dx_I}{d\mu_I}\frac{dx_J}{d\mu_J}
=
\frac{f_{I,J}}{ f_I  \cdot f_J} =:g.\]
To show that $x_J\mapsto g(\cdot ,x_J) \in L^1(\mu_I)$ is Lipschitz, using the triangle inequality, write 
\begin{align*}
&\left| g(x_I,x_J) - g( x_I,\tilde x_J)\right| \\
&\leq 
\frac{|f_{I,J}(x_I,x_J) - f_{I,J}(x_I,\tilde x_J)|}{f_I(x_I)f_J(x_J)}
+ \frac{f_{I,J}( x_I,\tilde x_J)}{f_I(x_I)} \left| \frac{1}{f_J(x_J)}  -\frac{1}{f_J(\tilde x_J)} \right| \\
&=:({\rm I}) + ({\rm II}).
\end{align*}
Since $\mu_I(dx_I)=f_I(x_I)dx_I$ and $f_J\geq a$, by Lipschitz continuity, 
\[\int ({\rm I}) \,d\mu_I
\leq \frac{1}{a} \int |f_{I,J}(x_I,x_J) - f_{I,J}(x_I,\tilde x_J)|  \,dx_I
\leq \frac{M \|x_J-\tilde x_J\|}{a}.\]
Similarly, 
\begin{align*}
\int ({\rm II}) \, d\mu_I 
& =
f_J(\tilde x_J) \left| \frac{1}{f_J(x_J)}  -\frac{1}{f_J(\tilde x_J)} \right| \\
&= \frac{|f_J(\tilde x_J) - f_J(x_J)|}{f_J( x_J)}
\leq \frac{M\|x_J-\tilde x_J\|}{a},
\end{align*}
where the last inequality follows from Jensen's inequality.
Thus $\|g(\cdot,x_J) - g(\cdot, \tilde x_J)\|_{L^1(\mu_I)} \leq 2M/a$ and the proof follows from Lemma~\ref{lem:char.TV}.

If the assumptions hold for all nonempty pairs \((I,J)\) in
\eqref{eq:pairs.i.j}, then applying the first part to these pairs gives the
three total-variation Lipschitz inequalities in Assumption~\ref{ass:TV.Lip},
with constant \(M/a\). Hence Assumption~\ref{ass:TV.Lip} holds with constant
\(M/a\).
\end{proof}

    	\begin{proof}[Proof of Proposition \ref{lem:sharp}]
         As already explained in the introduction, it suffices to prove the lower bound $Cn^{-2/(2+d_{\rm loc})}$.
       By assumption, there exists an index $k$ such that $d_{\rm loc} = d_{\pa(k)} + d_k$.
Define $\mathcal{Y} := \X_{\pa(k)} \times \X_k$, and let $\mathcal{Q} \subset \mathcal{P}(\mathcal{Y})$ denote the set of probability measures $\nu$ such that for every $I\subseteq \pa(k)\times\{k\}$ and $J:= (\pa(k)\times\{k\}) \setminus I$, the kernel $x_{I} \mapsto \nu(dx_J \mid x_I)$ is $L$-Lipschitz with respect to total variation.

Clearly every $\nu \in \mathcal{Q}$ can be extended to $\X$ by appending product measures; in other words, for every $\nu \in \mathcal{Q}$, there exists $\mu \in \mathcal{P}_G(\X)$ satisfying Assumption \ref{ass:TV.Lip} and $\mu|_{\mathcal{Y}} = \nu$. Therefore,
\[
\inf_{E_n} \sup_{\mu \in \mathcal{P}_G(\X) \text{ sat.\ Ass.~\ref{ass:TV.Lip}}} \mathbb{E}[\W(\mu, E_n)] 
\geq \inf_{E_n} \sup_{\nu \in \mathcal{Q}} \mathbb{E}[\W(\nu, E_n)].
\]
We claim that the right-hand side admits a  lower bound of order $n^{-2/(2+d_{\rm loc})}$ as a result of known lower bounds from smooth density estimation combined  with  Corollary~\ref{cor:leb-density-tvlip}. 

To that end,  let $\mathcal{R} \subset \mathcal{P}(\mathcal{Y})$ be the set of distributions with 1-Lipschitz Lebesgue-densities. 
Then (see, e.g., \cite[Example 4]{singh2018nonparametric}, \cite{liang2017well, niles2022minimax}),
\begin{align}
    \label{eq:denisty.lower.bound}
\inf_{E_n} \sup_{\nu \in \mathcal{R}} \mathbb{E}[\W(\nu, E_n)] \geq C n^{-2/(2 + d_{\rm loc})}
\end{align}
for some absolute constant $C > 0$.

By  Corollary~\ref{cor:leb-density-tvlip}, the set $\mathcal{R}$ is essentially contained in $\mathcal{Q}$, up to requiring a lower bound on the densities. This technical issue can be circumvented as follows.
The proof of \eqref{eq:denisty.lower.bound} is based on a standard non-asymptotic technique, namely constructing a finite subset $\mathcal{R}' \subset \mathcal{R}$ of exponentially many elements with pairwise $\W$-distances uniformly bounded below.
The estimate in \eqref{eq:denisty.lower.bound} is then shown to hold for $\mathcal{R}'$ in place of $\mathcal{R}$, which trivially implies \eqref{eq:denisty.lower.bound}.
(See the proof of Theorem \ref{lem:particulargraphlower} for the details of this method in the present setting.)
Since the bound only requires considering $\nu \in \mathcal{R}'$, the standard construction can be adapted by replacing each $\nu$ with $\gamma := \frac{1}{2}(\nu + \mathcal{U})$, where $\mathcal{U}$ denotes the uniform distribution on $\mathcal{Y}$. Note that
$
\W(\gamma, \tilde{\gamma}) = \frac{1}{2} \W(\nu, \tilde{\nu})$ for all $\nu, \tilde{\nu}$, 
so the minimax risk over these smoothed distributions is still bounded below:
\[
\inf_{E_n} \sup_{\gamma = \frac{1}{2}(\nu + \mathcal{U}),\; \nu \in \mathcal{R}'} \mathbb{E}[\W(\gamma, E_n)] \geq \frac{C}{2} n^{-2/(2 + d_{\rm loc})}.
\]

Finally, each such $\gamma$ has a $1$-Lipschitz density that is bounded from below by $\frac{1}{2}$.
Since $L\geq 2$, it follows from Corollary~\ref{cor:leb-density-tvlip} that  $\gamma \in\mathcal{Q}$, which completes the proof.
		\end{proof}

 \section{Lower bounds without continuity}
    \label{sec:lowerbounds}
    
    In this section, we establish that even under a known graph structure, the minimax learning rate remains of order $n^{-1/d}$ unless quantitative continuity assumptions are imposed. 
    Note that this rate matches the one obtained in the fully agnostic setting. 
    The results are based on technical adaptations of existing methods to our setting.
    
	We first recall the following result, which essentially follows from  \cite{chewi2024statistical}.
    
	\begin{theorem}\label{thm:basiclower}
        There exists an absolute constant $C>0$  such that for all $d\geq 1$ and $n \geq 8$, 
		\begin{align}
		    \label{eq:lower.bound.general.measures}
\inf_{E_n} \sup_{\mu \in \mathcal{P}([0, 1]^d)} \int \W(E_n, \mu) \,\mu^{\otimes n}
        \geq C \, n^{-1/\max\{d,2\}},
		\end{align}
		where the infimum ranges over all measurable maps $E_n : ([0, 1]^d)^n \rightarrow \mathcal{P}([0, 1]^d)$.
	\end{theorem}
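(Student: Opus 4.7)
The plan is to combine a standard two-point (Le Cam) argument for the regime $d \in \{1, 2\}$ with a many-hypothesis (Assouad) argument for $d \geq 3$.

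For $d \in \{1, 2\}$, I would take $\mu_0 = \delta_0$ and $\mu_1 = (1 - c n^{-1/2})\delta_0 + c n^{-1/2} \delta_{e_1}$ for a small constant $c > 0$, where $e_1$ denotes the first standard basis vector. A direct computation gives $\W(\mu_0, \mu_1) = c n^{-1/2}$ and $\TV(\mu_0^{\otimes n}, \mu_1^{\otimes n}) \leq c$ (by sub-additivity of squared Hellinger distance over product measures), so Le Cam's inequality yields $\inf_{E_n} \sup_{\mu \in \{\mu_0, \mu_1\}} \mathbb{E}[\W(\mu, E_n)] \geq \tfrac{c}{4} n^{-1/2}$. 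This would give the desired bound for $d \leq 2$.

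For $d \geq 3$, the plan is to carry out the classical perturbation-of-uniform construction. Partition $\X = [0,1]^d$ into $N$ sub-cubes $(Q_i)_{i=1}^N$ of side length $N^{-1/d}$, fix a smooth zero-mean bump function $\varphi$ supported in the unit cube with $\int |\varphi| = 1$, and for each $\sigma \in \{-1, +1\}^N$ define
\[
\mu_\sigma(dx) = \Big(1 + \alpha N^{-1} \sum_{i=1}^N \sigma_i \varphi_i(x)\Big)\, dx,
\]
where $\varphi_i$ is the appropriate rescaled translate of $\varphi$ localized on $Q_i$ (with $L^\infty$-norm of order $N$ so that $\|\alpha N^{-1} \varphi_i\|_\infty \leq 1/2$ for $\alpha$ small). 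Then $\mu_\sigma$ is a probability measure for $\alpha$ small enough, and two standard computations give: (i) $\W(\mu_\sigma, \mu_{\sigma'}) \gtrsim N^{-1/d} \cdot d_H(\sigma, \sigma')/N$ for the Hamming distance $d_H$, which can be seen by dual-pairing with a $1$-Lipschitz function that is affine on each cube with slope determined by $\sigma_i - \sigma'_i$; and (ii) $\mathrm{KL}(\mu_\sigma^{\otimes n}, \mu_{\sigma'}^{\otimes n}) \lesssim n \cdot d_H(\sigma, \sigma')/N^2 \cdot N = n \, d_H(\sigma,\sigma')/N$, using $\log(1+x) - x \leq x^2$ on the integrand. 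Assouad's lemma then converts these two estimates into a lower bound of order $N^{-1/d}$ provided $n/N$ is bounded by a small constant, i.e.\ $N \asymp n$, giving the rate $n^{-1/d}$.

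The main obstacle is ensuring the Wasserstein \emph{lower} bound between the hypotheses is of the right order; this is where the dual representation becomes essential, since the naive transport upper bound does not directly give a matching lower bound. The trick is to construct an explicit $1$-Lipschitz witness function built from the sign pattern $\sigma - \sigma'$: concretely, on each sub-cube $Q_i$ where $\sigma_i \neq \sigma'_i$, place a piecewise-affine hat of height $\sim N^{-1/d}$ aligned with the sign, which pairs against $\mu_\sigma - \mu_{\sigma'}$ to produce mass of order $N^{-1}$ per differing coordinate and thus total integral $\gtrsim N^{-1/d} \cdot d_H(\sigma, \sigma')/N$. Modulo this observation and the bookkeeping for the Assouad application, both steps are routine; a fully detailed presentation can be found in \cite{chewi2024statistical}, to which we refer for completeness.
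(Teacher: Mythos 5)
Your $d\ge 3$ sketch is consistent with the standard Assouad/Fano construction behind \cite[Theorem 2.15]{chewi2024statistical}, which the paper itself cites for that case; nothing to flag there beyond the bookkeeping you deferred, including the usual care needed to get the lower bound on $\W(\mu_\sigma,\mu_{\sigma'})$ via an explicit Lipschitz witness (which you do address).

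The $d\in\{1,2\}$ case has a genuine error. With $\mu_0=\delta_0$ and $\mu_1=(1-p)\delta_0+p\,\delta_{e_1}$, $p=cn^{-1/2}$, the two measures are not mutually absolutely continuous, and a single observation equal to $e_1$ reveals $\mu_1$ with certainty. Quantitatively, $H^2(\mu_0,\mu_1)=(1-\sqrt{1-p})^2+p\asymp p$, so $nH^2(\mu_0,\mu_1)\asymp np\asymp c\sqrt{n}\to\infty$, and indeed $\TV(\mu_0^{\otimes n},\mu_1^{\otimes n})=1-(1-p)^n\to 1$. The tensorization bound therefore gives nothing, and Le Cam's inequality degenerates; the claimed $\TV(\mu_0^{\otimes n},\mu_1^{\otimes n})\le c$ is false. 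The fix is to center the perturbation: take $\mu_0=\tfrac12\delta_0+\tfrac12\delta_{e_1}$ and $\mu_1=(\tfrac12-p)\delta_0+(\tfrac12+p)\delta_{e_1}$, which gives $\W(\mu_0,\mu_1)=p$ but now $H^2(\mu_0,\mu_1)\asymp p^2\asymp n^{-1}$, so $nH^2$ stays bounded and Le Cam applies. Alternatively, note (as the paper does) that for any $\mu,\nu$ the mean map is a Wasserstein contraction, $\W(\mu,\nu)\ge\|\mathrm{m}(\mu)-\mathrm{m}(\nu)\|$, which reduces the claim to the classical $n^{-1/2}$ minimax lower bound for bounded mean estimation and avoids redoing Le Cam by hand.
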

    \begin{proof}
        The case $d\geq 3$ follows directly from {\cite[Theorem 2.15]{chewi2024statistical}}.
        For the case $d=1,2$, denoting by ${\rm m}(\nu)$ the mean of a probability measure, we have that $\W(E_n, \mu) \geq \|{\rm m}(E_n)-{\rm m}(\mu)\|$.
        Hence, \eqref{eq:lower.bound.general.measures} is an immediate consequence of the classical result that the minimax rate for mean estimation is $Cn^{-1/2}$, see, e.g., Section 2 in \cite{tsybakov}.     
    \end{proof}

	The same lower bound as in the theorem happens to be true even if one restricts in \eqref{eq:lower.bound.general.measures}  to seemingly `small' subsets of $\mathcal{P}([0, 1]^d)$.
    Relevant to our setting, one particular instance of such a small subset is the set of all measures with \emph{conditional} independences, denoted by $\mathcal{P}_{\rm CI}$. 
    Formally, $\mu\in \mathcal{P}_{\rm CI}$ if and only if for $X\sim \mu$ and any pairwise disjoint and non-empty sets  $I,J,J'\subset\{1,\dots,K\}$, conditionally on $X_I$, the random vectors $X_J$ and $X_{J'}$ are independent.

    Note that $\mathcal{P}_{\rm CI}$ is indeed relatively small from the perspective of this article---for instance, $\mathcal{P}_{\rm CI}\subset \mathcal{P}_G$ for the Markov graph $G$ (or, more generally, any graph $G$ that has a single root node such as many tree-like graphs).

	\begin{theorem}\label{thm:lowercond}
    The set $\mathcal{P}_{\rm CI}$ is dense in $\mathcal{P}([0,1]^d)$ with respect to the weak convergence of probability measures.
    Moreover, there is an absolute constant $C>0$ for which, if $d\geq 3$ and $n\geq 8$,
    \begin{align}
		    \label{eq:lower.bound.P.CI}
            \inf_{E_n} \sup_{\mu \in \mathcal{P}_{\rm CI}} \int \W(E_n, \mu) \,\mu^{\otimes n}
        \geq C \, n^{-1/d},
		\end{align}
    \end{theorem}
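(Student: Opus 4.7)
Both parts of the theorem rely on the following remark: if $\nu\in\mathcal{P}([0,1]^d)$ is supported on finitely many atoms whose values in each coordinate are pairwise distinct across atoms, then $\nu\in\mathcal{P}_{\rm CI}$. Indeed, for any non-empty $I\subset\{1,\dots,K\}$ and any $x_I$ in the $I$-marginal support, distinctness forces a unique atom with its $I$-coordinates equal to $x_I$, so $\nu(\cdot\mid X_I=x_I)$ is a Dirac measure; consequently, any two disjoint groups of the remaining coordinates are deterministic (constant) functions of $X_I$ and hence trivially conditionally independent.

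\textbf{Density.} Given $\mu\in\mathcal{P}([0,1]^d)$, I would first approximate $\mu$ weakly by a finitely supported measure $\mu_N=\sum_{i=1}^{M}p_i\delta_{z^i}$ (e.g.\ partition $[0,1]^d$ into cubes of side $1/N$ and place the $\mu$-mass of each cube at its centre). Then perturb each atom $z^i$ to $\tilde z^i$ by an arbitrarily small amount, chosen so that for every coordinate $k\in\{1,\dots,d\}$ the $M$ values $\tilde z^i_k$ are pairwise distinct---this is possible since the ``bad'' set of perturbations has Lebesgue measure zero. The resulting measure lies in $\mathcal{P}_{\rm CI}$ by the key observation above, and is arbitrarily close to $\mu_N$ (hence to $\mu$) in any weak convergence metric.

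\textbf{Lower bound.} The plan is to adapt the Assouad construction behind Theorem~\ref{thm:basiclower}. Set $N:=\lceil cn^{1/d}\rceil$ for a small absolute constant $c>0$ and partition $[0,1]^d$ into $N^d$ cubes $Q_i$ of side $1/N$. In each $Q_i$ pick two points $y_i^0,y_i^1$ with $\|y_i^0-y_i^1\|\geq 1/(2N)$ (e.g.\ near opposite corners), and then perform a generic perturbation of size $\ll 1/N$ so that the $2N^d$ points $\{y_i^0,y_i^1\}_i$ have pairwise distinct values within every coordinate. For $\sigma\in\{0,1\}^{N^d}$ define
\[\mu_\sigma:=\frac{1}{N^d}\sum_{i=1}^{N^d}\delta_{y_i^{\sigma_i}}.\]
Each $\mu_\sigma$ lies in $\mathcal{P}_{\rm CI}$ by the key observation. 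A direct lower bound on any coupling yields $\W(\mu_\sigma,\mu_{\sigma'})\gtrsim d_H(\sigma,\sigma')/N^{d+1}$ with $d_H$ the Hamming distance. Moreover, for adjacent $\sigma\sim\sigma'$ ($d_H=1$), only a single atom of mass $1/N^d$ differs between $\mu_\sigma$ and $\mu_{\sigma'}$; the squared Hellinger distance is therefore at most $2/N^d$, and by the standard Hellinger tensorization the $n$-fold squared Hellinger, and hence $\TV(\mu_\sigma^{\otimes n},\mu_{\sigma'}^{\otimes n})$, stays bounded away from $1$ once $c$ is chosen small enough that $2n/N^d$ is a small constant. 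Assouad's lemma (see, e.g., \cite[Theorem~2.12]{tsybakov}) then gives
\[\inf_{E_n}\sup_\sigma\int\W(E_n,\mu_\sigma)\,d\mu_\sigma^{\otimes n}\gtrsim N^d\cdot\frac{1}{N^{d+1}}\asymp n^{-1/d},\]
which is precisely the claim.

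\textbf{Main obstacle.} The subtle point is that the $\mu_\sigma$ are purely atomic with only partially overlapping supports, so KL divergences between them are generically infinite and the KL-based versions of Fano's inequality are unavailable. Working through the Hellinger distance (which is always finite and tensorises exactly) and converting to total variation only at the end bypasses this issue. The remaining work, namely verifying that the generic perturbation at scale $\ll 1/N$ does not disturb the distance estimates up to constants and that weak density is preserved by the coordinate perturbations, is routine.
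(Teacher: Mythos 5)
Your proposal is correct in substance, and the density part is essentially the paper's argument: approximate by a finitely supported measure and generically perturb the atoms so that each coordinate (block) separates the atoms, which forces all conditional laws to be Dirac and hence yields membership in $\mathcal{P}_{\rm CI}$. For the lower bound you take a genuinely different route: the paper does not rebuild the construction but observes that the proof of Theorem \ref{thm:basiclower} in \cite{chewi2024statistical} already restricts the supremum to measures supported on the midpoints of a grid of mesh $\asymp n^{-1/d}$, and that a small perturbation of these support points (exactly as in the density step) places all hypotheses in $\mathcal{P}_{\rm CI}$ without affecting the distance computations. You instead reconstruct the lower bound from scratch via a two-points-per-cube Assouad scheme with Hellinger tensorization. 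Your route is self-contained and makes the robustness of the construction explicit, at the cost of redoing work the paper deliberately black-boxes; the paper's route is shorter but leans on the structure of an external proof.

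Two slips in your Assouad step should be fixed, though neither is fatal. First, the direction of the constant is reversed: with $N=\lceil c\,n^{1/d}\rceil$ you need $2n/N^d\leq 2/c^{d}$ to be small, so $c$ must be chosen \emph{large}, not small (the final rate $\asymp 1/N\asymp n^{-1/d}$ is unaffected, only the constant changes). Second, with $y_i^0,y_i^1$ ``near opposite corners'', the claimed bound $\W(\mu_\sigma,\mu_{\sigma'})\gtrsim d_H(\sigma,\sigma')/N^{d+1}$ is not a direct consequence of bounding any coupling, because atoms belonging to adjacent cubes can be arbitrarily close across a shared face, so differing mass need not travel a distance of order $1/N$ to the nearest support point of the other measure. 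The standard fix is to place both candidate points in the central part of each cube, separated by $\asymp 1/N$, so that \emph{all} $2N^{d}$ candidate points are pairwise $\gtrsim 1/N$-separated; then $\W(\mu_\sigma,\mu_{\sigma'})\geq \delta\,\TV(\mu_\sigma,\mu_{\sigma'})=\delta\, d_H(\sigma,\sigma')/N^{d}$ with $\delta\gtrsim 1/N$, which is the inequality you want, and a generic perturbation of size $\ll 1/N$ preserves both this separation and the Hellinger computation (which only depends on the masses of shared versus differing atoms). With these repairs, Assouad's lemma gives the stated $n^{-1/d}$ bound; alternatively, the simpler subadditivity $\TV(\mu_\sigma^{\otimes n},\mu_{\sigma'}^{\otimes n})\leq n\,\TV(\mu_\sigma,\mu_{\sigma'})=n/N^{d}$ would spare you the Hellinger detour entirely.
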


   \begin{corollary}\label{cor:graphcondind}
		For any graph $G$ that has a single root node (e.g.\ Markov graphs), \eqref{eq:lower.bound.P.CI} holds true with $\mathcal{P}_{G}$ instead of $\mathcal{P}_{\rm CI}$.
	\end{corollary}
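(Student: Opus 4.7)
The plan is to reduce the claim to Theorem~\ref{thm:lowercond} by establishing the containment $\mathcal{P}_{\rm CI} \subseteq \mathcal{P}_G$ whenever $G$ has a single root node. Once this inclusion is in hand, the lower bound transfers for free: the supremum over the larger set $\mathcal{P}_G$ dominates the supremum over $\mathcal{P}_{\rm CI}$, so the bound $C n^{-1/d}$ from Theorem~\ref{thm:lowercond} applies verbatim. The density statement of $\mathcal{P}_G$ in $\mathcal{P}([0,1]^d)$ follows in exactly the same way from the density of $\mathcal{P}_{\rm CI}$.

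To prove the inclusion, I would fix $\mu \in \mathcal{P}_{\rm CI}$ and apply the chain rule along the topological ordering of $G$:
\[
\mu(dx) = \mu(dx_1)\, \prod_{k=2}^{K} \mu(dx_k \mid x_{1:k-1}).
\]
The task then reduces to verifying $\mu(dx_k \mid x_{1:k-1}) = \mu(dx_k \mid x_{\pa(k)})$ for every $k \geq 2$. The crucial structural input is that, since $G$ is topologically sorted and has a \emph{single} root node, that root must be node $1$, so $\pa(k) \neq \emptyset$ for every $k \geq 2$. For each such $k$, I would split into two cases: if $\pa(k) = \{1,\dots,k-1\}$, the equality is tautological; otherwise the set $J' := \{1,\dots,k-1\} \setminus \pa(k)$ is non-empty, and I would invoke the definition of $\mathcal{P}_{\rm CI}$ with the pairwise disjoint non-empty sets $I := \pa(k)$, $J := \{k\}$, and $J'$ to obtain conditional independence of $X_k$ and $X_{J'}$ given $X_I$, which yields precisely the desired identity. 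Combined with the trivial equality $\mu(dx_1) = \mu(dx_1 \mid x_{\pa(1)})$ (since $\pa(1) = \emptyset$), the resulting factorisation matches the definition of $\mathcal{P}_G$, so $\mu \in \mathcal{P}_G$.

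The single-root hypothesis is genuinely essential and marks the only subtle point: if $G$ had two or more root nodes, $\mathcal{P}_G$ would enforce \emph{unconditional} independence between them, which is not implied by $\mathcal{P}_{\rm CI}$ since that class only postulates independences under non-empty conditioning. Beyond flagging this structural remark, the argument is a direct application of the local Markov property, and I do not foresee any real obstacle in carrying it out rigorously.
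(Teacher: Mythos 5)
Your proposal is correct and follows the same route the paper intends: the inclusion $\mathcal{P}_{\rm CI} \subseteq \mathcal{P}_G$ for single-root graphs (which the paper asserts just before Theorem~\ref{thm:lowercond} without spelling out the argument) immediately transfers the lower bound from Theorem~\ref{thm:lowercond}. Your verification of the inclusion via the chain rule and the conditional-independence condition with $I = \pa(k)$, $J = \{k\}$, $J' = \{1,\dots,k-1\}\setminus\pa(k)$ is exactly the right way to fill in that gap, and your remark that multiple roots would force an unconditional independence not captured by $\mathcal{P}_{\rm CI}$ correctly identifies why the single-root hypothesis is needed.
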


		\begin{proof}[{Proof of Theorem \ref{thm:lowercond}}]
        We start by proving the first claim in the theorem.
        To that end, recall that the set of discrete measures $\mu=\frac{1}{|\mathcal{X}'|} \sum_{x\in\mathcal{X}'} \delta_{x}$ with finite $\mathcal{X}'\subset[0,1]^d$ is weakly dense in $\mathcal{P}([0, 1]^d)$. 
        Thus it suffices to show that $\mathcal{P}_{\rm CI}$ is dense in the set of those discrete measures.
        
         Fix some $\mu$ as above, defined using $\mathcal{X}'\subset [0,1]^d$.
        Next, for  small $\varepsilon>0$ consider the set $\mathcal{X}^\varepsilon$ which is obtained from $\mathcal{X}'$ by changing  each $x\in\mathcal{X}'$ at most by a distance $\varepsilon$ in a way such that the following holds: 
        \begin{align*}
            &\text{for all distinct $x,y\in\mathcal{X}^\mathcal{\varepsilon}$ and all $k=1,\dots, K$ : $x_k\neq y_k$}.
        \end{align*}

        For the resulting measure $\mu ^\varepsilon$, if  $X \sim \mu^\varepsilon$ and $I,J,J    '\subset \{1,\dots,K\}$ are pairwise disjoint (and $I$ is non-empty), we have that conditionally on $X_I$, the random vectors $X_J$ and $X_{J'}$ are deterministic (as knowing any entry $x_k$ for $k \in I$ completely determines the whole vector $x \in \mathcal{X}^{\mathcal{\varepsilon}}$)---thus independent.
    Hence, $\mu^\varepsilon \in \mathcal{P}_{\rm CI}$ and it is clear that $\mu^\varepsilon\to\mu$ as $\varepsilon\to 0$.
    This completes the proof of the first statement.

    \vspace{0.5em}
    We proceed with the proof of \eqref{eq:lower.bound.P.CI}.
    First observe that \eqref{eq:lower.bound.P.CI} does not directly follow from the denseness of $\mathcal{P}_{\rm CI}$, as $E_n$ need not be continuous. 
    However, the proof for the lower bound of Theorem \ref{thm:basiclower} presented in \cite{chewi2024statistical} (see Theorem 2.15 and its proof therein) shows that it suffices to restrict the supremum to measures supported on the mid points of a grid of side length approximately $n^{-1/d}$. 
    Perturbing the mid points slightly as in the first part of this proof yields that all probability measures supported on the grid correspond to completely deterministic relations across dimensions, and thus are contained in $\mathcal{P}_{\rm CI}$. 
    As the structure of the support (aside from the distance between points) plays no role in the proof given in \cite{chewi2024statistical}, the arguments trivially extend to the present setting.
		\end{proof}

	The obvious next question is whether graphs which have several root nodes (i.e., imply several unconditional independencies) still lead to the same lower bound. While the general answer is open and will not be provided in this paper, we instead focus on one extreme case of this sort, where indeed the same lower bound is true: 
	
    \begin{proposition}\label{lem:particulargraphlower}
		There exists an absolute constant $C$ such that the following holds.
        Let $K\geq 3$ and $G$ be the graph with nodes $1, \dots, K$ only including the edges $k \rightarrow K$ for $k=1, \dots, K-1$. 
        Then, for every $n \geq 1$, 
		\[
		\inf_{E_n} \sup_{\mu \in \mathcal{P}_G(\X)} \int \W(E_n, \mu) \,d\mu^{\otimes n} \geq C \, n^{-1/d},
		\]
		where the infimum ranges over all measurable maps $E_n : \X^n \rightarrow \mathcal{P}(\X)$.
\begin{proof}	
			We follow the standard minimax approach from \cite[Section 2.2]{tsybakov} using a lower bound via decision rules: if $s>0$ and  $\mathcal{Q}\subset \mathcal{P}_G(\X)$ is a finite family satisfying that $\W(\mu,\nu)\geq 2s$ for any distinct $\mu,\nu\in\mathcal{Q}$, then 
			\begin{align}
            \label{eq:tsybakov.lower}
			\inf_{E_n} \sup_{\mu \in \mathcal{P}_G(\X)} \int \W(E_n, \mu) \,d\mu^{\otimes n} \geq s \, \inf_{\psi_n} \max_{\mu\in\mathcal{Q}} \mu^{\otimes n}(\psi_n \neq \mu),
			\end{align}
            where the infimum is taken over all so-called decision rules  $\psi_n : ([0,1]^d)^n \rightarrow \mathcal{Q}$.

            In order to apply this result, let $\beta\in \mathbb{N}$ to be specified in what follows (in Step 4 below) and partition $[0, 1]^d=\X_{1:K-1} \times \X_K$ into $\beta^d$ many cubes.
            For a finite set $A\subset \X_{1:K-1}$ or $A\subset \X_K$, we denote by $\mathcal{U}_A$ the (discrete) uniform distribution on $A$.

            \vspace{0.5em}
            \noindent
            \emph{Step 1:} We start by constructing a family of measures on $\X_K$.
            
            Denote the centres of the cubes in $\X_{K}$ by $\mathcal{M}_K$; thus $|\mathcal{M}_K|=\beta^{d_K}$.
            By a version of the Varshamov-Gilbert bound \cite[Lemma 2.9]{tsybakov}, there is a family $\mathcal{S} \subset 2^{\mathcal{M}_K}$ satisfying 
            \begin{align}
                \label{eq:properties.S}
                |\mathcal{S}| \geq 2^{ |\mathcal{M}_K| /8} \quad\text{and}\quad |S| \geq \frac{|\mathcal{M}_K|}{8} \quad\text{and}\quad |(S \setminus \tilde{S}) \cup (\tilde{S}\setminus S)| \geq \frac{|\mathcal{M}_K|}{8}
            \end{align}
            for all $S, \tilde{S} \in \mathcal{S}$.
            Set $\nu_0= \mathcal{U}_{\mathcal{M}_K}$ and for $S \in \mathcal{S}$, put $\nu_S := \frac{1}{2} ( \mathcal{U}_{\mathcal{M}_K} + \mathcal{U}_{S})$. 
            The following two observations follow from \eqref{eq:properties.S}:
			\begin{itemize}
				\item[(a)] The density satisfies $\frac{d\nu_0}{d\nu_S} \in[\frac{1}{5},2]$.
				\item[(b)] For distinct $\nu,\nu' \in \{\nu_0\} \cup \{\nu_S\mid S \in \mathcal{S}\}$, we have $\TV(\nu, \nu') \geq \frac{1}{32}$.
			\end{itemize}
		

             \vspace{0.5em}\noindent
            \emph{Step 2:} We proceed to construct kernels from $\X_{1:K-1}$ to $\X_K$ and measures on $\X_{1:K}$.
            
           Denote by $\mathcal{M}_{1:K-1}$ the centres of the cubes in $\X_1 \times \dots \times \X_{K-1}$; thus $|\mathcal{M}_{1:K-1}|= \beta^{d_{1:K-1}}$.
             By Lemma \ref{lem:appendixbound} (applied with $\mathcal{M}_{1:K-1}$ and $\{ \nu_S \mid S \in \mathcal{S}\}$), there is  a set $\mathcal{R}$ of kernels $R : \mathcal{M}_{1:K-1}\rightarrow \{ \nu_S \mid S \in \mathcal{S}\}$ satisfying 
             \[   \mathcal{U}_{\mathcal{M}_{1:K-1}}(R \neq R') \geq  \frac{1}{8} \]
             for any distinct $R, R' \in \mathcal{R}$, and
             \[ |\mathcal{R}| 
             \geq   \frac{1}{2} |\{\nu_S : S\in\mathcal{S}\}|^{C |\mathcal{M}_{1:K-1}|}
             \geq   \frac{1}{2} \left(2^{|\mathcal{M}_K|/8} \right)^{C |\mathcal{M}_{1:K-1}|}
             =  \frac{1}{2} 2^{C \beta^d/8}, \]
             where $C>0$ is  an absolute constant.
            Define 
            \[ \mathcal{Q} := \left\{ \mu_R := \mathcal{U}_{\mathcal{M}_{1:K-1}} \otimes R : R \in \mathcal{R} \right\} \cup \left\{ \mu_0 := \mathcal{U}_{\mathcal{M}_{1:K-1}} \otimes \mathcal{U}_{\mathcal{M}_{K}} \right\}.\]

             \vspace{0.5em}
            \emph{Step 3:}
            Observe that clearly $\mathcal{Q}\subset\mathcal{P}_G$ by the definition of the graph.
            Next, since
            \[\frac{d\mathcal{U}_{\mathcal{M}_{1:K-1}} \otimes R}{d\mathcal{U}_{\mathcal{M}_{1:K-1}} \otimes R'}(x) = \frac{dR(x_{1:K-1})}{dR'(x_{1:K-1})}(x_K),\]
           for any $\mu_R,\mu_{R'}\in\mathcal{Q}$, we have that 
			 \[
			 \TV(\mu_R, \mu_{R'}) = \int \TV(R, R') \,d\mathcal{U}_{\mathcal{M}_{1:K-1}}\geq \frac{1}{8\cdot32}.
			 \]
             Moreover, since $\mu_R,\mu_{R'}$ are supported on the same grid of size $\frac{1}\beta$, it follows that $\W(\mu_R, \mu_{R'}) \geq \frac{1}{\beta} \TV(\mu_R, \mu_{R'}) \geq s$ for $s := \frac{1}{512 \beta}$.

            \vspace{0.5em}\noindent
             \emph{Step 4:} Computation of the lower bound.
             
             For any decision rule $\psi_n\colon ([0,1]^d)^n \to \mathcal{Q}$, using that $\frac{d\mu_0}{d\mu_R}\geq \frac{1}{5}$, it follows that
			  \begin{align*}
			 \mu_0^{\otimes n}(\psi_n \neq \mu_0 ) 
             &= \sum_{R \in \mathcal{R}} \mu_0^{\otimes n}(\psi_n = \mu_R)  \\
             &\geq \frac{|\mathcal{R}|}{5^n} \frac{1}{|\mathcal{R}|} \sum_{R \in \mathcal{R}} \mu_R^{\otimes n}(\psi_n = \mu_R) =: \frac{|\mathcal{R}|}{5^n} p_{\mathcal{R}}
			 \end{align*}
			 and thus
			 \begin{align*}
				\max\left\{\mu_0^{\otimes n}(\psi_n \neq \mu_0) \,,\, \max_{R \in \mathcal{R}} \mu_R^{\otimes n}(\psi_n \neq \mu_R)\right\} 
                &\geq \max\left\{\frac{|\mathcal{R}|}{5^n} p_{\mathcal{R}} \, , \, \frac{1}{|\mathcal{R}|} \sum_{R \in \mathcal{R}} \mu_R^{\otimes n}(\psi_n \neq \mu_R)\right\} \\
				&\geq \max\left\{\frac{|\mathcal{R}|}{5^n} p_{\mathcal{R}} \, ,\, 1-p_{\mathcal{R}}\right\}=:({\rm I}).
			 \end{align*}
             
             Recall that $|\mathcal{R}|\geq \frac{1}{2}2^{C\beta^d}$ and let $\beta$ be the smallest integer for which $\frac{1}{2} 2^{C\beta^d}\geq 5^n$; thus $\beta \leq C' n^{1/d}$ for some absolute constant $C'$.
            With this choice of $\beta$ clearly $({\rm I})\geq \frac{1}{2}$ and therefore, by \eqref{eq:tsybakov.lower},
            \[  \inf_{E_n} \sup_{\mu \in \mathcal{P}_G(\X)} \int \W(E_n, \mu) \,d\mu^{\otimes n} 
            \geq  \frac{s}{2}
            =  \frac{1}{1024 \beta}
            \geq  \frac{1}{C'1024} n^{-1/d},\]
            completing the  proof.
		\end{proof}
	\end{proposition}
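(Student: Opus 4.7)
My approach will be Tsybakov's reduction from minimax estimation to multiple hypothesis testing (cf.~\cite[Section 2.2]{tsybakov}): it suffices to exhibit a finite family $\mathcal{Q}\subset\mathcal{P}_G(\X)$ whose elements are pairwise $2s$-separated in $\W$ with $s\gtrsim n^{-1/d}$ and whose $n$-fold products cannot be reliably distinguished. The specific structure of $G$ is the key asset: every $\mu\in\mathcal{P}_G$ factorizes as $\mu_1\otimes\cdots\otimes\mu_{K-1}\otimes R$ for an essentially arbitrary stochastic kernel $R\colon\X_{1:K-1}\to\mathcal{P}(\X_K)$. All the complexity of $\mathcal{P}_G(\X)$ thus sits in this single kernel, and my plan is to exploit its combinatorial richness to match the unconstrained minimax rate $n^{-1/d}$.

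To construct the packing, I would fix a resolution $\beta\in\mathbb{N}$ (tuned at the end), partition each $\X_k$ into a grid of $\beta^{d_k}$ cubes, let $\mathcal{M}_k$ denote the midpoints, and fix uniform marginals $\mu_k=\mathcal{U}_{\mathcal{M}_k}$ for $k<K$. Varshamov--Gilbert \cite[Lemma 2.9]{tsybakov} produces a family $\mathcal{S}\subset 2^{\mathcal{M}_K}$ of cardinality $\exp(c\beta^{d_K})$ for which the measures $\nu_S:=\tfrac{1}{2}(\mathcal{U}_{\mathcal{M}_K}+\mathcal{U}_S)$ have uniformly bounded densities with respect to $\mathcal{U}_{\mathcal{M}_K}$ and are pairwise TV-separated by a constant. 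Next I would build a family $\mathcal{R}$ of kernels $R\colon\mathcal{M}_{1:K-1}\to\{\nu_S\mid S\in\mathcal{S}\}$ so that any two differ on a constant fraction of inputs; a Varshamov--Gilbert-type argument over the alphabet $\mathcal{S}$ (e.g., a random construction plus union bound) should yield $|\mathcal{R}|\geq\exp(c'\beta^{d_{1:K-1}}\beta^{d_K})=\exp(c'\beta^d)$.

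For the separation, averaging the kernel-level TV-gap over the uniform parent distribution gives $\TV(\mu_R,\mu_{R'})\gtrsim 1$ for the probability measures $\mu_R:=\mathcal{U}_{\mathcal{M}_{1:K-1}}\otimes R$, and since both measures sit on the common grid of mesh $1/\beta$, any transport plan must move mass at least $1/\beta$, so $\W(\mu_R,\mu_{R'})\geq c/\beta$. For the testing step I would adjoin the reference measure $\mu_0:=\mathcal{U}_{\mathcal{M}_{1:K-1}}\otimes\mathcal{U}_{\mathcal{M}_K}$; since $d\mu_0/d\mu_R$ is uniformly bounded by construction, the likelihood ratio $d\mu_0^{\otimes n}/d\mu_R^{\otimes n}$ is controlled pointwise, and a direct counting argument comparing $\mu_0^{\otimes n}(\psi_n=\mu_R)$ across $R\in\mathcal{R}$ shows any decision rule errs with probability at least a constant on some hypothesis. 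Taking $\beta$ to be the largest integer with $\exp(c'\beta^d)\geq C^n$ gives $\beta\asymp n^{1/d}$ and the desired lower bound $s\asymp 1/\beta\asymp n^{-1/d}$.

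The main obstacle I foresee is the kernel packing construction. Classical Varshamov--Gilbert is stated for binary strings, whereas here the alphabet $\mathcal{S}$ is itself exponentially large. I expect the cleanest route is a probabilistic construction: sample each $R$ uniformly from $\mathcal{S}^{\mathcal{M}_{1:K-1}}$, estimate the probability that two independent samples agree on more than a $7/8$ fraction of inputs via a concentration/Chernoff bound, and union-bound to extract a large subfamily. The remaining pieces---the passage from $\TV$ to $\W$ via the common grid, the uniform likelihood-ratio control coming from $d\nu_S/d\mathcal{U}_{\mathcal{M}_K}\in[1/5,2]$, and the final choice of $\beta$---should then be routine.
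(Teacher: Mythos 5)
Your proposal is correct and follows essentially the same route as the paper: uniform marginals on grid midpoints for the first $K-1$ coordinates, Varshamov--Gilbert perturbations $\nu_S=\tfrac12(\mathcal{U}_{\mathcal{M}_K}+\mathcal{U}_S)$ on the last coordinate, a packing of kernels $R\colon\mathcal{M}_{1:K-1}\to\{\nu_S\}$ (the paper gets this from a Gilbert--Varshamov bound over a general alphabet, Lemma~\ref{lem:appendixbound}, which is the deterministic counterpart of your random-construction-plus-Chernoff idea), the $\TV$-to-$\W$ conversion via the mesh $1/\beta$, and the bounded-likelihood-ratio counting argument against the reference $\mu_0$. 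Only note that $\beta$ should be chosen as the \emph{smallest} integer making the hypothesis count exceed $5^n$ (not the largest, which does not exist), so that $\beta\asymp n^{1/d}$ and $s\asymp 1/\beta$ gives the claimed rate.
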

    
	\begin{remark}
		There are quite a few graphs for which we can easily derive the $n^{-1/d}$ lower bound by combining Theorem \ref{thm:lowercond} and Theorem \ref{lem:particulargraphlower}, such as for instance graphs like $1 \rightarrow 2 \rightarrow 3 \leftarrow 4$.
		
		A critical case for a graph which is not covered by the above results is \[1\rightarrow 2\leftarrow 3 \rightarrow 4 \leftarrow 5\rightarrow 6\leftarrow 7.\] It is open to us at this point what the right lower bound for $\mathcal{P}_G$ for this graph should be without continuity assumptions.
	\end{remark}    

\vspace{1em}
\noindent
{\bf Acknowledgements:}
The authors are grateful to the associate editor and the reviewers for thoughtful comments and suggestions which significantly improved the paper.
Daniel Bartl is grateful for financial support through the Austrian Science Fund [doi: 10.55776/P34743 and 10.55776/ESP31], the Austrian National Bank [Jubil\"aumsfond, project 18983], and a Presidential-Young-Professorship grant [`Robust Statistical Learning from Complex Data'].
Stephan Eckstein is grateful for support by the German Research Foundation through Project 553088969 as well as the Cluster of Excellence “Machine Learning --- New Perspectives for Science” (EXC 2064/1 number 390727645).

\bibliography{smoothbib}

@article{singh2018nonparametric,
	title={Nonparametric density estimation under adversarial losses},
	author={Singh, Shashank and Uppal, Ananya and Li, Boyue and Li, Chun-Liang and Zaheer, Manzil and P{\'o}czos, Barnab{\'a}s},
	journal={Advances in Neural Information Processing Systems},
	volume={31},
	year={2018}
}

@article{liang2017well,
	title={How well can generative adversarial networks learn densities: A nonparametric view},
	author={Liang, Tengyuan},
	journal={arXiv preprint arXiv:1712.08244},
	year={2017}
}

@article{niles2022minimax,
	title={Minimax estimation of smooth densities in {W}asserstein distance},
	author={Niles-Weed, Jonathan and Berthet, Quentin},
	journal={The Annals of Statistics},
	volume={50},
	number={3},
	pages={1519--1540},
	year={2022},
	publisher={Institute of Mathematical Statistics}
}

@book{tsybakov,
title={Introduction to Nonparametric Estimation},
author={Alexandre B.~Tsybakov},
publisher={Springer New York, NY},
year={2009}
}

@article{nietert2022statistical,
	title={Statistical, robustness, and computational guarantees for sliced {W}asserstein distances},
	author={Nietert, Sloan and Goldfeld, Ziv and Sadhu, Ritwik and Kato, Kengo},
	journal={Advances in Neural Information Processing Systems},
	volume={35},
	pages={28179--28193},
	year={2022}
}

@article{manole2022minimax,
	title={Minimax confidence intervals for the sliced {W}asserstein distance},
	author={Manole, Tudor and Balakrishnan, Sivaraman and Wasserman, Larry},
	journal={Electronic Journal of Statistics},
	volume={16},
	number={1},
	pages={2252--2345},
	year={2022},
	publisher={The Institute of Mathematical Statistics and the Bernoulli Society}
}

@inproceedings{genevay2019sample,
	title={Sample complexity of {S}inkhorn divergences},
	author={Genevay, Aude and Chizat, L{\'e}naic and Bach, Francis and Cuturi, Marco and Peyr{\'e}, Gabriel},
	booktitle={The 22nd International Conference on Artificial Intelligence and Statistics},
	pages={1574--1583},
	year={2019},
	organization={PMLR}
}

@article{goldfeld2020convergence,
	title={Convergence of smoothed empirical measures with applications to entropy estimation},
	author={Goldfeld, Ziv and Greenewald, Kristjan and Niles-Weed, Jonathan and Polyanskiy, Yury},
	journal={IEEE Transactions on Information Theory},
	volume={66},
	number={7},
	pages={4368--4391},
	year={2020},
	publisher={IEEE}
}

@article{backhoff2022estimating,
	title={Estimating processes in adapted {W}asserstein distance},
	author={Backhoff, Julio and Bartl, Daniel and Beiglb{\"o}ck, Mathias and Wiesel, Johannes},
	journal={The Annals of Applied Probability},
	volume={32},
	number={1},
	pages={529--550},
	year={2022},
	publisher={Institute of Mathematical Statistics}
}

@article{chewi2024statistical,
	title={Statistical optimal transport},
	author={Chewi, Sinho and Niles-Weed, Jonathan and Rigollet, Philippe},
	journal={arXiv preprint arXiv:2407.18163},
	year={2024}
}

@article{dudley1969speed,
	title={The speed of mean {G}livenko-{C}antelli convergence},
	author={Dudley, Richard Mansfield},
	journal={The Annals of Mathematical Statistics},
	volume={40},
	number={1},
	pages={40--50},
	year={1969},
	publisher={JSTOR}
}

@article{fournier2015rate,
	title={On the rate of convergence in Wasserstein distance of the empirical measure},
	author={Fournier, Nicolas and Guillin, Arnaud},
	journal={Probability Theory and Related Fields},
	volume={162},
	number={3},
	pages={707--738},
	year={2015},
	publisher={Springer}
}

@article{fournier2023convergence,
  title={Convergence of the empirical measure in expected {W}asserstein distance: non-asymptotic explicit bounds in {$\mathbb{R}^d$}},
  author={Fournier, Nicolas},
  journal={ESAIM: Probability and Statistics},
  volume={27},
  pages={749--775},
  year={2023},
  publisher={EDP Sciences}
}

@article{vandermeulen2024breaking,
	title={Breaking the curse of dimensionality in structured density estimation},
	author={Vandermeulen, Robert A and Tai, Wai Ming and Aragam, Bryon},
	journal={arXiv preprint arXiv:2410.07685},
	year={2024}
}

@article{gyorfi2022tree,
	title={Tree density estimation},
	author={Gy{\"o}rfi, L{\'a}szl{\'o} and Kontorovich, Aryeh and Weiss, Roi},
	journal={IEEE Transactions on Information Theory},
	volume={69},
	number={2},
	pages={1168--1176},
	year={2022},
	publisher={IEEE}
}

@article{kloeckner2020empirical,
	title={Empirical measures: regularity is a counter-curse to dimensionality},
	author={Kloeckner, Beno{\^\i}t R},
	journal={ESAIM: Probability and Statistics},
	volume={24},
	pages={408--434},
	year={2020},
	publisher={EDP Sciences}
}

@article{shah2020hardness,
	title={The hardness of conditional independence testing and the generalised covariance measure},
	author={Shah, Rajen D and Peters, Jonas},
	year={2020}
}

@article{neykov2021minimax,
	title={Minimax optimal conditional independence testing},
	author={Neykov, Matey and Balakrishnan, Sivaraman and Wasserman, Larry},
	journal={The Annals of Statistics},
	volume={49},
	number={4},
	pages={2151--2177},
	year={2021},
	publisher={JSTOR}
}

@article{azadkia2021simple,
	title={A simple measure of conditional dependence},
	author={Azadkia, Mona and Chatterjee, Sourav},
	journal={The Annals of Statistics},
	volume={49},
	number={6},
	pages={3070--3102},
	year={2021},
	publisher={JSTOR}
}

@inproceedings{faller2024self,
	title={Self-compatibility: Evaluating causal discovery without ground truth},
	author={Faller, Philipp M and Vankadara, Leena C and Mastakouri, Atalanti A and Locatello, Francesco and Janzing, Dominik},
	booktitle={International Conference on Artificial Intelligence and Statistics},
	pages={4132--4140},
	year={2024},
	organization={PMLR}
}

@article{benezet2024learning,
	title={Learning conditional distributions on continuous spaces},
	author={B{\'e}n{\'e}zet, Cyril and Cheng, Ziteng and Jaimungal, Sebastian},
	journal={Journal of Machine Learning Research},
    volume={26},
    issue={105},
    pages={1-64},
	year={2025}
}

@article{sani2023bounding,
	title={Bounding probabilities of causation through the causal marginal problem},
	author={Sani, Numair and Mastakouri, Atalanti A and Janzing, Dominik},
	journal={arXiv preprint arXiv:2304.02023},
	year={2023}
}

@inproceedings{gresele2022causal,
  title={Causal inference through the structural causal marginal problem},
  author={Gresele, Luigi and Von K{\"u}gelgen, Julius and K{\"u}bler, Jonas and Kirschbaum, Elke and Sch{\"o}lkopf, Bernhard and Janzing, Dominik},
  booktitle={International Conference on Machine Learning},
  pages={7793--7824},
  year={2022},
  organization={PMLR}
}

@article{battaglia2018relational,
	title={Relational inductive biases, deep learning, and graph networks},
	author={Battaglia, Peter W and Hamrick, Jessica B and Bapst, Victor and Sanchez-Gonzalez, Alvaro and Zambaldi, Vinicius and Malinowski, Mateusz and Tacchetti, Andrea and Raposo, David and Santoro, Adam and Faulkner, Ryan and others},
	journal={arXiv preprint arXiv:1806.01261},
	year={2018}
}

@book{peters2017elements,
	title={Elements of Causal Inference: Foundations and Learning Algorithms},
	author={Peters, Jonas and Janzing, Dominik and Sch{\"o}lkopf, Bernhard},
	year={2017},
	publisher={The MIT Press}
}

@book{graf2000foundations,
  title={Foundations of Quantization for Probability Distributions},
  author={Graf, Siegfried and Luschgy, Harald},
  year={2000},
  publisher={Springer Science \& Business Media}
}

@book{villani2008optimal,
  title={Optimal Transport: Old and New},
  author={Villani, C{\'e}dric},
  volume={338},
  year={2008},
  publisher={Springer}
}

@book{koller2009probabilistic,
  title={Probabilistic Graphical Models: Principles and Techniques},
  author={Koller, Daphne and Friedman, Nir},
  year={2009},
  publisher={MIT press}
}

@book{pearl2009causality,
  title={Causality},
  author={Pearl, Judea},
  year={2009},
  publisher={Cambridge University Press}
}

@article{cheridito2025optimal,
  title={Optimal transport and Wasserstein distances for causal models},
  author={Cheridito, Patrick and Eckstein, Stephan},
  journal={Bernoulli},
  volume={31},
  number={2},
  pages={1351--1376},
  year={2025},
  publisher={Bernoulli Society for Mathematical Statistics and Probability}
}

@incollection{scholkopf2022causality,
  title={Causality for machine learning},
  author={Sch{\"o}lkopf, Bernhard},
  booktitle={Probabilistic and causal inference: The works of Judea Pearl},
  pages={765--804},
  year={2022}
}

@article{heckerman1998tutorial,
  title={A tutorial on learning with Bayesian networks},
  author={Heckerman, David},
  journal={Learning in graphical models},
  pages={301--354},
  year={1998},
  publisher={Springer}
}

@article{marcot2019advances,
  title={Advances in Bayesian network modelling: Integration of modelling technologies},
  author={Marcot, Bruce G and Penman, Trent D},
  journal={Environmental Modelling \& Software},
  volume={111},
  pages={386--393},
  year={2019},
  publisher={Elsevier}
}

@article{laubach2021biologist,
  title={A biologist's guide to model selection and causal inference},
  author={Laubach, Zachary M and Murray, Eleanor J and Hoke, Kim L and Safran, Rebecca J and Perng, Wei},
  journal={Proceedings of the Royal Society B},
  volume={288},
  number={1943},
  pages={20202815},
  year={2021},
  publisher={The Royal Society}
}

@article{ebert2012causal,
  title={Causal discovery for climate research using graphical models},
  author={Ebert-Uphoff, Imme and Deng, Yi},
  journal={Journal of Climate},
  volume={25},
  number={17},
  pages={5648--5665},
  year={2012}
}

@article{nichani2024transformers,
  title={How transformers learn causal structure with gradient descent},
  author={Nichani, Eshaan and Damian, Alex and Lee, Jason D},
  journal={arXiv preprint arXiv:2402.14735},
  year={2024}
}

@article{zevcevic2021relating,
  title={Relating graph neural networks to structural causal models},
  author={Ze{\v{c}}evi{\'c}, Matej and Dhami, Devendra Singh and Veli{\v{c}}kovi{\'c}, Petar and Kersting, Kristian},
  journal={arXiv preprint arXiv:2109.04173},
  year={2021}
}

@book{ling2004coding,
  title={Coding Theory: A First Course},
  author={Ling, San and Xing, Chaoping},
  year={2004},
  publisher={Cambridge University Press}
}

@book{vershynin2018high,
  title={High-dimensional probability: An introduction with Applications in Data Science},
  author={Vershynin, Roman},
  volume={47},
  year={2018},
  publisher={Cambridge University Press}
}

@article{acciaio2024convergence,
  title={Convergence of adapted empirical measures on  {$\mathbb{R}^d$}},
  author={Acciaio, Beatrice and Hou, Songyan},
  journal={The Annals of Applied Probability},
  volume={34},
  number={5},
  pages={4799--4835},
  year={2024},
  publisher={Institute of Mathematical Statistics}
}

@article{bartl2025structure,
  title={Structure preservation via the Wasserstein distance},
  author={Bartl, Daniel and Mendelson, Shahar},
  journal={Journal of Functional Analysis},
  volume={288},
  number={7},
  pages={110810},
  year={2025},
  publisher={Elsevier}
}

@book{Bertsekas1978,
    author    = {Bertsekas, Dimitri P. and Shreve, Steven E.},
    title     = {Stochastic Optimal Control: The Discrete Time Case},
    publisher = {Academic Press},
    year      = {1978},
    address   = {New York},
    series    = {Mathematics in Science and Engineering},
    volume    = {139},
    isbn      = {978-0120932609}
}

@book{cormen2009introduction,
title={Introduction to Algorithms},
author={Cormen, Thomas H. and Leiserson, Charles E. and Rivest, Ronald L. and Stein, Clifford},
year={2009},
edition={3rd},
publisher={MIT Press}
}

@article{cinelli2025challenges,
  title={Challenges in statistics: A dozen challenges in causality and causal inference},
  author={Cinelli, Carlos and Feller, Avi and Imbens, Guido and Kennedy, Edward and Magliacane, Sara and Zubizarreta, Jose},
  journal={arXiv preprint arXiv:2508.17099},
  year={2025}
}

@article{rodriguez2025improved,
  title={An improved central limit theorem for the empirical sliced {W}asserstein distance},
  author={Rodr{\'\i}guez-V{\'\i}tores, David and del Barrio, Eustasio and Loubes, Jean-Michel},
  journal={arXiv preprint arXiv:2503.18831},
  year={2025}
}

@book{devroye2001combinatorial,
  title={Combinatorial methods in density estimation},
  author={Devroye, Luc and Lugosi, G{\'a}bor},
  volume={1},
  year={2001},
  publisher={Springer}
}

@article{weed2019sharp,
  title={Sharp asymptotic and finite-sample rates of convergence of empirical measures in {W}asserstein distance},
  author={Weed, Jonathan and Bach, Francis},
  journal={Bernoulli},
  volume={25},
  number={4A},
  pages={2620--2648},
  year={2019},
  publisher={JSTOR}
}

@article{ledoux2019optimal,
  title={On Optimal Matching of Gaussian Samples.},
  author={Ledoux, Michel},
  journal={Journal of Mathematical Sciences},
  volume={238},
  number={4},
  year={2019}
}

@article{hundrieser2024empirical,
  title={Empirical optimal transport between different measures adapts to lower complexity},
  author={Hundrieser, Shayan and Staudt, Thomas and Munk, Axel},
  journal={Annales de l'Institut Henri Poincar{\'e}, Probabilit{\'e}s et Statistiques},
  volume={60},
  number={2},
  pages={824--846},
  year={2024},
}

@article{bos2024supervised,
  title={A supervised deep learning method for nonparametric density estimation},
  author={Bos, Thijs and Schmidt-Hieber, Johannes},
  journal={Electronic Journal of Statistics},
  volume={18},
  number={2},
  pages={5601--5658},
  year={2024},
  publisher={The Institute of Mathematical Statistics and the Bernoulli Society}
}

@article{fan2025optimal,
  title={Optimal estimation of a factorizable density using diffusion models with ReLU neural networks},
  author={Fan, Jianqing and Gu, Yihong and Li, Ximing},
  journal={arXiv preprint arXiv:2510.03994},
  year={2025}
}

@article{kumar2024likelihood,
  title={A likelihood based approach to distribution regression using conditional deep generative models},
  author={Kumar, Shivam and Yang, Yun and Lin, Lizhen},
  journal={arXiv preprint arXiv:2410.02025},
  year={2024}
}

@article{kwon2026nonparametric,
  title={Nonparametric estimation of a factorizable density using diffusion models},
  author={Kwon, Hyeok Kyu and Kim, Dongha and Ohn, Ilsang and Chae, Minwoo},
  journal={Journal of Machine Learning Research},
  volume={27},
  number={22},
  pages={1--125},
  year={2026}
}

@article{chow1968,
  author  = {Chow, C. K. and Liu, C. N.},
  title   = {Approximating Discrete Probability Distributions with Dependence Trees},
  journal = {IEEE Transactions on Information Theory},
  volume  = {14},
  number  = {3},
  pages   = {462--467},
  year    = {1968},
  doi     = {10.1109/TIT.1968.1054142}
}

@article{friedman1997,
  author  = {Friedman, Nir and Geiger, Dan and Goldszmidt, Moises},
  title   = {Bayesian Network Classifiers},
  journal = {Machine Learning},
  volume  = {29},
  pages   = {131--163},
  year    = {1997},
  doi     = {10.1023/A:1007465528199}
}

@article{ferreira2019,
  author  = {Ferreira-Santos, Daniela and Rodrigues, Pedro Pereira},
  title   = {A Clinical Risk Matrix for Obstructive Sleep Apnea Using Bayesian Network Approaches},
  journal = {International Journal of Data Science and Analytics},
  volume  = {8},
  pages   = {339--349},
  year    = {2019},
  doi     = {10.1007/s41060-018-0118-x}
}

@book{JensenNielsen2007,
  author    = {Jensen, Finn V. and Nielsen, Thomas D.},
  title     = {Bayesian Networks and Decision Graphs},
  edition   = {2},
  publisher = {Springer},
  year      = {2007},
  doi       = {10.1007/978-0-387-68282-2}
}

@article{chakraborty2026generalization,
  title={Generalization Properties of Score-matching Diffusion Models for Intrinsically Low-dimensional Data},
  author={Chakraborty, Saptarshi and Berthet, Quentin and Bartlett, Peter L},
  journal={arXiv preprint arXiv:2603.03700},
  year={2026}
}

@article{10.1214/19-AOS1875,
author = {Johannes Schmidt-Hieber},
title = {{Nonparametric regression using deep neural networks with ReLU activation function}},
volume = {48},
journal = {The Annals of Statistics},
number = {4},
publisher = {Institute of Mathematical Statistics},
pages = {1875 -- 1897},
keywords = {Additive models, minimax estimation risk, multilayer neural networks, Nonparametric regression, ReLU activation function, Wavelets},
year = {2020},
doi = {10.1214/19-AOS1875},
URL = {https://doi.org/10.1214/19-AOS1875}
}
\bibliographystyle{abbrv}

\appendix

\section{Proofs for Section \ref{sec:wlip}}
\label{sec:proofs.W}

\begin{proof}[Proof of Lemma \ref{lem:Wdecomp}]
        For every $j=1,\dots,K$ and $m^j=(m^j_k)_{k=1}^j\in (\R_+)^j$, let
        \[  d_{m^j}(x_{1:j},y_{1:j}) := \sum_{k=1}^j m^j_k \|x_k - y_k\|\]
        and set $\W_{m^j}$ to be first order Wasserstein distance on $\X_{1:j}$ with respect to $d_{m^j}$.
        In particular, $\W\leq \W_{m^K}$ for $m^K=(1,\dots,1)$.

        \vspace{0.5em}\noindent
        \emph{Step 1:} We claim that for every $j\geq 2$, $m^j\in\R_+^j$ and $\mu,\nu\in \mathcal{P}_G(\X)$,
        \begin{align} 
        \label{eq:wasserstein.basic.recursion}
        \begin{split}
        \W_{m^j}(\mu_{1:j},\nu_{1:j})
       & \leq \W_{m^{j-1}}(\mu_{1:j-1},\nu_{1:j-1})  \\
       &\quad + m^j_j  \int \W(\mu(\cdot \mid y_{\pa(j)}), \nu(\cdot \mid y_{\pa(j)})) \, \nu(dy),
       \end{split}
       \end{align}
        where  $m^{j-1}\in \R_+^{j-1}$ is defined by
        \begin{align}
            \label{eq:recursive.def.m}
            m^{j-1}_k := m^j_k + L m^j_j \eins_{\pa(j)}(k), \quad k=1,\dots,j-1.
        \end{align}

        To prove \eqref{eq:wasserstein.basic.recursion}, first observe that
        \begin{align}
        \label{eq:W.leq.AW}
        \W_{m^j}(\mu_{1:j}, \nu_{1:j})
        &\leq \inf_{\pi \in \Pi(\mu_{1:j-1}, \nu_{1:j-1})} \int d_{(m^j_1,\dots,m^j_{j-1})}(x_{1:j-1}, y_{1:j-1})  \\
        &\qquad + m^j_j \W(\mu(\cdot \mid x_{\pa(j)}), \nu(\cdot \mid y_{\pa(j)})) \,\pi(dx_{1:j-1}, dy_{1:j-1}).
        \nonumber
        \end{align}
        Indeed, by a standard measurable selection argument (see, e.g., \cite[Proposition 7.50(b)]{Bertsekas1978}), there exists a universally measurable map assigning to each pair $(x_{\pa(j)}, y_{\pa(j)})$ an optimal coupling $\gamma^{(x_{\pa(j)}, y_{\pa(j)})}$ for the Wasserstein distance between the conditional measures $\mu(\cdot \mid x_{\pa(j)})$ and $\nu(\cdot \mid y_{\pa(j)})$
        (recall that the existence of such optimal couplings is ensured, for instance, by \cite[Theorem 4.1]{villani2008optimal}).
        In particular, for every $\pi \in \Pi(\mu_{1:j-1}, \nu_{1:j-1})$, the measure 
        \[\Gamma(dx_{1:j},dy_{1:j}) := \pi(dx_{1:j-1},dy_{1:j-1})\otimes \gamma^{(x_{\pa(j)},y_{\pa(j)})}(dx_j,dy_j)  \]
        is well-defined.
        Moreover, one readily checks that $\Gamma$ is a coupling between $\mu_{1:j}$ and $\nu_{1:j}$, from which \eqref{eq:W.leq.AW} follows.

        Next observe that,  by Assumption \ref{ass:W.Lip},
         \begin{align*}
          \W(\mu(\cdot \mid x_{\pa(j)}), \nu(\cdot \mid y_{\pa(j)})) 
        &  \leq  L \|x_{\pa(j)}-y_{\pa(j)} \| +  \W(\mu(\cdot \mid  y_{\pa(j)}), \nu(\cdot \mid y_{\pa(j)})) .
        \end{align*}
        Finally, by the definitions of the metrics $d_{m^j}$ and $d_{m^{j-1}}$ and the definition of $m^{j-1}$,
        \begin{align*}
           & d_{(m^j_1,\dots,m^j_{j-1})}(x_{1:j-1},y_{1:j-1}) + L m^j_j \|x_{\pa(j)}-y_{\pa(j)} \| \\
        &\leq d_{(m^{j-1}_1,\dots,m^{j-1}_{j-1})}(x_{1:j-1},y_{1:j-1}).
        \end{align*}
        Concatenating \eqref{eq:W.leq.AW} with the two inequalities above completes the proof of \eqref{eq:wasserstein.basic.recursion}.

        \vspace{0.5em}
        \emph{Step 2:} 
        It follows from Step 1 and a simple induction that 
         \[ \W(\mu,\nu)
        \leq \sum_{j=1}^K m^j_j  \int \W(\mu(\cdot \mid y_{\pa(j)}), \nu(\cdot \mid y_{\pa(j)})) \nu(dy) \]
        where $m^j$ is given recursively by \eqref{eq:recursive.def.m} starting with $m^K=(1,\dots,1)$.
        Thus, to complete the proof, we are left to show that $m^j_j = M_{L,j}$, the latter being defined in the assertion of the lemma. To see this, one verifies that $m^j_i$ arise from a standard dynamic programming approach to calculate $M_{L, j}$ and we leave the details to the reader;\footnote{Cf.~\cite{cormen2009introduction} for a standard reference; notably, the dynamic programming procedure in this proof is very similar to \cite[Exercise 24.2-4]{cormen2009introduction}, where the total number of paths in a DAG is counted.} an exemplary case is shown in Figure \ref{fig:constants}.
		\end{proof}

    \begin{figure}
        \centering
        \includegraphics[width=0.9\linewidth]{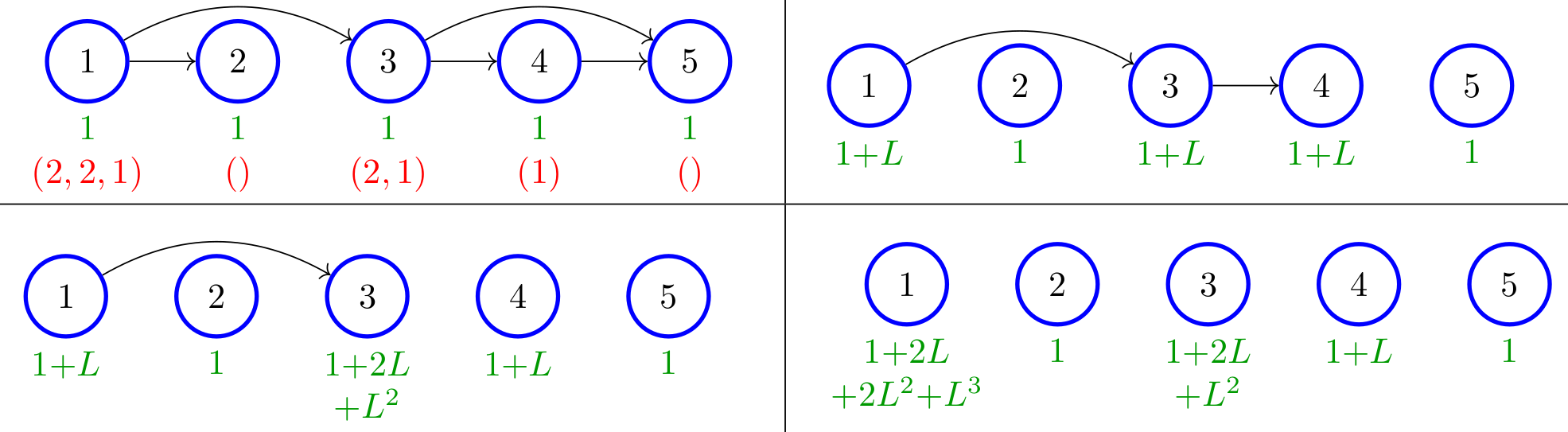}
        \caption{Exemplification of the constants occurring in Lemma \ref{lem:Wdecomp}. The red numbers indicate the number of outgoing paths of different lengths (e.g., $(2, 1)$ below node 3 indicates that there are 2 paths of length 1, and 1 path of length 2 outgoing). The green numbers indicate how the constants for the cost change in the backward induction of the proof of Lemma \ref{lem:Wdecomp}. At the end of the backward induction (bottom right), the red numbers indicate the constants for each node, e.g., $2L+2L^2+L^3$ corresponds to $(2, 2, 1)$ for node 1.}
        \label{fig:constants}
    \end{figure}

\begin{proof}[Proof of Lemma \ref{lem:conditional.iid}]
        Let $X\sim \mu$ and let $(X^i)_{i=1}^n$ be an i.i.d.\ sample of $X$.
        For shorthand notation, set $(Y,Z):=(X_{\pa(k)},X_k)$, similarly for $Y^i$ and $Z^i$.
        Moreover, write $c=c_{\pa(k)}$.
        Thus, by Lemma \ref{lem:muAwelldefined}, $\mu^{\mathcal{A}}(c)=\mu(c)=\mathbb{P}(Y\in c)$ and by \eqref{eq:nu.A.kernels.identity} for every measurable set $B\subset \X_k$, $\mu^{\mathcal{A}}(B\mid c)  = \mathbb{P}(Z\in B \mid Y\in c)$.

        Denote by $I$ the (random) set of indices $i\leq n$ for which $Y^i\in  c$ so that $\hat{\mu}^\mathcal{A}(\cdot \mid c) = \frac{1}{|I|}\sum_{i\in I} \delta_{Z^i}$  by the definition of  $\hat{\mu}^\mathcal{A}$.
        Thus, it suffices to show that, conditionally on $|I|=m$, the random vector $(Z^i)_{i\in I}$ has the same distribution as an i.i.d.\ sample of size $m$ from $\mu^\mathcal{A}(d x_k \mid c)$; that is, $(Z^i)_{i\in I} \sim (\mu^\mathcal{A}( dx_k \mid c))^{\otimes m}$. Equivalently, for a measurable set $B\subseteq (\X_k)^m$ we need to show $\mathbb{P}((Z^i)_{i \in I} \in B , |I|=m) = (\mu^{\mathcal{A}}(\cdot\mid c))^{\otimes m}(B) \,\cdot\,\mathbb{P}(|I|=m)$.
        
        To that end,  note that
        \begin{align}
        \nonumber
           & \mathbb{P}\left( (Z^i)_{i\in I} \in B ,\,  |I|=m\right)\\
            \nonumber
            & =  \sum_{J\subset [n] \, : \, |J|=m}  \mathbb{P}\left( (Z^i)_{i\in J} \in B, \, \forall i\in J:  Y^i\in c,  \, \forall i\notin J: Y^i\notin c\right) \\
            & = \sum_{J\subset [n] \, : \, |J|=m}  \mathbb{P}\left( (Z^i)_{i\in J} \in B, \, \forall i\in J:  Y^i\in c \right) \mathbb{P}\left(  Y\notin c\right)^{n-m} 
            \label{eq:sum.subset.J}
        \end{align}
        where we have used independence of the sample in the last equality.
        Since 
        \[\mathbb{P}\left( (Z^i)_{i\in J} \in B, \, \forall i\in J:  Y^i\in  c  \right)
        = (\mu( dx_k \mid c))^{\otimes m}(B) \mu(c)^m\]
        and $\mu(dx_k \mid c ) = \mu^\mathcal{A}(dx_k\mid c)$, the claim readily follows noting that there are $\binom{n}{m}$-many subsets $J$ in \eqref{eq:sum.subset.J} and that $\binom{n}{m}  \mu(c)^m (1-\mu(c))^{n-m} = \mathbb{P}\left(|I|=m\right)$.
    \end{proof}
    
\begin{proof}[Proof of Theorem \ref{thm:estWLip}]
\emph{Step 1:} It follows from Lemma \ref{lem:Wdecomp} that
		\begin{align*}
			\W(\mu, \hat{\mu}^{\mathcal{A}}) 
            &\leq \sum_{k=1}^K M_{L, k} \int \W(\mu(\cdot \mid y_{\pa(k)}), \hat{\mu}^{\mathcal{A}}(\cdot \mid y_{\pa(k)})) \, \hat{\mu}^{\mathcal{A}}(dy_{\pa(k)}) .
		\end{align*}
        Moreover, for every $k\leq K$, since  $\mu^{\mathcal{A}}(\cdot \mid y_{\pa(k)})$ is an average of measures of the form $\mu(\cdot \mid x_{\pa(k)})$ over $x_{\pa(k)}$ that satisfy $\|x_{\pa(k)} - y_{\pa(k)} \| \leq \delta_{\mathcal{A}}$, the triangle inequality together with Assumption \ref{ass:W.Lip} implies that
        \begin{align*}
            &\W(\mu(\cdot \mid y_{\pa(k)}), \hat{\mu}^{\mathcal{A}}(\cdot \mid y_{\pa(k)})) 
            \leq  L  \delta_{\mathcal{A}} +  \W(\mu^{\mathcal{A}}(\cdot \mid y_{\pa(k)}), \hat{\mu}^{\mathcal{A}}(\cdot \mid y_{\pa(k)}))  .
        \end{align*}
        Finally, since $\mu^{\mathcal{A}}(\cdot \mid y_{\pa(k)})$ and $\hat{\mu}^{\mathcal{A}}(\cdot \mid y_{\pa(k)}) $ are constant in $y_{\pa(k)}$ as long as $y_{\pa(k)}$ belongs to a fixed cell $c\in\mathcal{A}_{\pa(k)}$, since $\mu^{\mathcal{A}}(\cdot \mid c) = \mu(\cdot \mid c)$, we get
        \begin{align}
        \label{eq:wasserstein.kernels.split.to.cells}
			\W(\mu, \hat{\mu}^{\mathcal{A}}) 
            &\leq \sum_{k=1}^K M_{L, k} \left( L\delta_{\mathcal{A}} + \sum_{c\in\mathcal{A}_{\pa(k)}} \hat{\mu}^{\mathcal{A}}(c) \W(\mu^{\mathcal{A}}(\cdot \mid c), \hat{\mu}^{\mathcal{A}}(\cdot \mid c) ) \right).
		\end{align}

\vspace{0.5em}
\noindent
    \emph{Step 2:}
    Fix $k\leq K$, and let $c\in \mathcal{A}_{\pa(k)}$.
    Observe that if $\mu^{\mathcal{A}}(c)=0$, then $\hat{\mu}^{\mathcal{A}}(c)=0$ almost surely.
    Otherwise, for $m\leq n$, by Lemma \ref{lem:conditional.iid}, conditionally on the event $n\hat{\mu}^{\mathcal{A}}(c)=m$, $\hat{\mu}^{\mathcal{A}}(\cdot \mid c)$ has the same distribution as the empirical measure of $\mu^{\mathcal{A}}(\cdot \mid c)$ with sample size $m$.
    Thus, setting $\bar{d}_k:=\max\{2,d_k\}$, it follows from \eqref{eq:classical.convergence.wasserstein.in.proof} that
    \begin{align*} 
    \mathbb{E}\left[ \W(\mu^{\mathcal{A}}(\cdot \mid c)), \hat{\mu}^{\mathcal{A}}(\cdot \mid c)) \mid n\hat{\mu}^{\mathcal{A}}(c)=m \right] 
    &\leq  8 l_m(d_k) m^{-1/\bar{d}_k}\\
    &\leq  8 l_n(d_k) \left( n\hat{\mu}^{\mathcal{A}}(c) \right) ^{-1/\bar{d}_k}.
    \end{align*}
    Therefore,  by the tower property,
    \begin{align*}
        & \mathbb{E}\left[\sum_{c\in\mathcal{A}_{\pa(k)}}  \hat{\mu}^{\mathcal{A}}(c) \W(\mu^{\mathcal{A}}(\cdot \mid c), \hat{\mu}^{\mathcal{A}}(\cdot \mid c))   \right] \\
        &=\sum_{c\in\mathcal{A}_{\pa(k)}} \mathbb{E}\left[ \hat{\mu}^{\mathcal{A}}(c)  \mathbb{E}\left[ \W(\mu^{\mathcal{A}}(\cdot \mid c)), \hat{\mu}^{\mathcal{A}}(\cdot \mid c)) \mid n\hat{\mu}^{\mathcal{A}}(c) \right] \right] \\
        &\leq \sum_{c\in\mathcal{A}_{\pa(k)}}  \mathbb{E}\left[ \hat{\mu}^{\mathcal{A}}(c)  8l_n(d_k) \left( n\hat{\mu}^{\mathcal{A}}(c) \right)^{-1/\bar{d}_k} \right] =: 8\cdot l_n(d_k) \cdot ({\rm I}).
    \end{align*}
    Moreover, by an application of Jensen's inequality,
   \begin{align*}
    ({\rm I}) 
     &=   \frac{|\mathcal{A}_{\pa(k)}|}{n} \mathbb{E}\left[ \frac{1}{|\mathcal{A}_{\pa(k)}|}\sum_{c\in\mathcal{A}_{\pa(k)}}   \left( n\hat{\mu}^{\mathcal{A}}(c) \right)^{1-1/\bar{d}_k} \right] \\
     &\leq \frac{|\mathcal{A}_{\pa(k)}|}{n} \mathbb{E}\left[\left( \frac{1}{|\mathcal{A}_{\pa(k)}|}\sum_{c\in\mathcal{A}_{\pa(k)}}    n\hat{\mu}^{\mathcal{A}}(c) \right)^{1-1/\bar{d}_k} \right] \\
    & =\frac{|\mathcal{A}_{\pa(k)}|}{n} \left( \frac{n}{|\mathcal{A}_{\pa(k)}|} \right)^{1-1/\bar{d}_k} 
    = \left( \frac{n}{|\mathcal{A}_{\pa(k)}|} \right)^{-1/\bar{d}_k}.
    \end{align*}

\vspace{0.5em}
\noindent
    \emph{Step 3:}
    By combining    Step 1 and Step 2, 
    \begin{align}
    \label{eq:recursive.split.some.proof}
    \mathbb{E}\left[ \W(\mu, \hat{\mu}^{\mathcal{A}}) \right]
    &\leq \sum_{k=1}^K M_{L, k} \left( L\delta_{\mathcal{A}} + 8 l_n(d_k) \left( \frac{n}{|\mathcal{A}_{\pa(k)}|} \right)^{-1/\bar{d}_k} \right),
    \end{align}
    and it remains to estimate the last expression.
    By the choice of $\eta$ in the theorem, namely $\eta=\lfloor \frac{1}{d_{\rm loc}} \log_2(n) \rfloor$, we have that 
    \begin{align*}
        \delta_\mathcal{A}
        =  2^{-\eta} 
        \leq 2^{- \frac{1}{d_{\rm loc}} \log_2(n)  +1} =2  n^{-1/d_{\rm loc}}, 
    \end{align*}
    and
    \begin{align*}    
        |\mathcal{A}_{\pa(k)}|
        &= 2^{\eta d_{\pa(k)}} \leq n^{d_{\pa(k)}/d_{\rm loc}}.
    \end{align*}
    Therefore,
    \begin{align*}
    L\delta_{\mathcal{A}} + 8l_n(d_k) \left( \frac{n}{|\mathcal{A}_{\pa(k)}|} \right)^{-1/\bar{d}_k} 
    &\leq 2L n^{-1/d_{\rm loc}} + 8l_n(d_k)\left( n^{1-d_{\pa(k)}/d_{\rm loc}} \right)^{-1/\bar{d}_k} \\
    &=:2L n^{-1/d_{\rm loc}} + ({\rm II}).
    \end{align*}

    Finally, it remains to estimate the term $({\rm II})$.
    If $d_k\neq 2$ then $l_n(d_k)=1$ and, since $d_{\rm loc}\geq d_{\pa(k)}+\bar{d}_k$ by definition and thus $1-\frac{d_{\pa(k)}}{d_{\rm loc}} \geq \frac{\bar{d}_k}{d_{\rm loc}}$, we have $({\rm II}) \leq 8 n^{-1/d_{\rm loc}}$.
    If $d_k=2$ and $d_{\rm loc}$ is not attained for this node, then $d_{\rm loc}\geq d_{\pa(k)}+\bar{d}_k+1$. 
    Using that $\log(n)\leq r n^{1/r}$ for all $r,n\geq 1$, a straightforward calculation shows that $({\rm II}) \leq 16 d_{\rm loc} n^{-1/d_{\rm loc}}$.
    Ultimately, if $d_k=2$ and $d_{\rm loc}$ is attained for this node, then clearly $({\rm II})\leq 8 l_n(d_k) n^{-1/d_{\rm loc}}$.  
Hence the proof follows from \eqref{eq:recursive.split.some.proof}.  
\end{proof}

\begin{proof}[Proof of Theorem \ref{thm:estWLip.adaptoive}]
    The proof follows from the same arguments as the proof presented for Theorem \ref{thm:main.TV.adaptive}.
    The only difference is that we set $\hat\nu_G$ to be the estimator from Definition \ref{def:mu.A} with the corresponding bandwidth for $\eta$ given in Theorem \ref{thm:estWLip}.
    The rest of the proof follows essentially verbatim, with Theorem \ref{thm:estTV} and \eqref{eq:bias} replaced by Theorem \ref{thm:estWLip} and 
    \[ \mathbb{E}\left[ \W(\mu, \hat{\nu}_G) \right] 
        \leq C \cdot \max\{1,\log(n)\} \cdot n^{-1/d_{\rm loc}},\]
    respectively. 
    We leave the details to the reader.
\end{proof}

\section{Proofs for Section \ref{sec:tvlip}}
\label{app:TV}

\begin{proof}[Proof of Lemma \ref{lem:M.conditional.iid}]
    Let $X\sim \mu$ and let $(X^i)_{i=1}^n$ be the i.i.d.\ sample selected according to $\mu$.
    Set $I$ to be the random set of indices $i \leq n$ for which $X^i_{\pa(k)}\in c_{\pa(k)}$; thus $ n\cdot \hat\mu^\mathcal{M}(c_{\pa(k)}) = |I|$ (by Lemma \ref{lem:mubAwelldefined}).

    First note that, conditionally on $|I|=m$, the vector $(X^i_k)_{i\in I}$ has the same distribution as that of an i.i.d.\ sample of the distribution $\mu(dx_k \mid c_{\pa(k)})$ (this follows exactly as in the proof of Lemma \ref{lem:conditional.iid}).
    In particular,  conditionally on $|I|=m$, $\alpha:=\frac{1}{|I|}\sum_{i\in I} \delta_{X^i_k}$ has the same distribution as the empirical measure of $\mu(dx_k\mid c_{\pa(k)})$ with sample size $m$.

    Next, denote by $\psi\colon \X_k\to\mathcal{M}_k$ the projection of each cell to its center.
    Thus $\mu^{\mathcal{M}}(d x_k \mid x_{\pa(k)}) $ is the push-forward of $\mu(dx_k\mid c_{\pa(k)})$ under $\psi$ and, similarly, $\hat\mu^\mathcal{M}(dx_k\mid c_{\pa(k)})$ is the push-forward of $\alpha$ under $\psi$  (again using Lemma \ref{lem:mubAwelldefined}).
    Since the push-forward of the empirical measure is the empirical measure of the push-forward of the measure, 
    the claim follows.
\end{proof}

\begin{proof}[Proof of Lemma \ref{lem:average.on.cells}]
    For every $1\leq k \leq K$, define the kernel $R\colon \X_{k:K} \to\mathcal{P}(\X_{k:K})$ via
    \[ R_k(x_{k:K},d\tilde{x}_{k:K}) := \nu_{k|c_{k}(x_{k})}(d\tilde x_{k}) \cdots \nu_{K|c_{K}(x_{K})}(d\tilde x_K). \]
    Thus $f^\nu(x) = \int f(\tilde x )\, R_1(x,d \tilde x)$.
    We claim that, for every $k=1,\dots, K$ and $x_{1:k}\in \X_{1:k}$,\footnote{For the following, we note that while $\nu^{\mathcal{M}}$ is supported on the midpoints of the cells, we naturally extend its kernels in a constant fashion to $\mathcal{X}$. That is, $\nu^{\mathcal{M}}(dx_{k:K}\mid x_{1:k-1}) = \nu^{\mathcal{M}}(dx_{k:K}\mid c_{1:k-1}(x_{1:k-1}))$.}
    \begin{align*}
    &\iiint f(x_{1:k-1}, \tilde x_{k:K}) \, R_{k}(x_{k:K},d \tilde x_{k:K}) \, \nu^\mathcal{M}(dx_{k:K} \mid x_{1:k-1})\,\nu^{b\mathcal{A}}(d x_{1:k-1}) \\
    &=  \iiint  f(x_{1:k}, \tilde x_{k+1:K}) \, R_{k+1}(x_{k+1:K},d \tilde x_{k+1:K}) \, \nu^\mathcal{M}(dx_{k+1:K} \mid x_{1:k})\,\nu^{b\mathcal{A}}(d x_{1:k}) . 
    \end{align*}
    If that claim is true, the proof of the lemma follows from an iterative application, noting that left hand side  is equal to $\int f^\nu \,d\nu^\mathcal{M}$ for $k=1$ and the right hand side is equal to $\int f \,d\nu^{b\mathcal{A}}$ for $k=K$.  

    To prove the claim, fix $1\leq k< K$ and $x_{1:k-1}\in \X_{1:k-1}$, and note that
    \begin{align*}
    ({\rm I})&:=R_{k}(x_{k:K},d \tilde x_{k:K}) \, \nu^\mathcal{M}(dx_{k:K} \mid x_{1:k-1}) \\
    &= R_{k+1}(x_{k+1:K},d \tilde x_{k+1:K}) \, \nu_{k|c_k(x_k)}(d\tilde x_k) \, \nu^\mathcal{M}(dx_{k+1:K} \mid x_{1:k}) \, \nu^\mathcal{M}(dx_k \mid x_{1:k-1}) \\
    &= R_{k+1}(x_{k+1:K},d \tilde x_{k+1:K})  \, \nu^\mathcal{M}(dx_{k+1:K} \mid x_{1:k}) \, \nu_{k|c_k(x_k)}(d\tilde x_k) \, \nu^\mathcal{M}(dx_k \mid x_{1:k-1})
    \end{align*}
    Next, recall that $ \nu^\mathcal{M}(dx_k \mid x_{1:k-1})$ is the projection of $\nu^{b\mathcal{A}}(dx_k \mid x_{1:k-1})$ to the centres of the cells (i.e., $\nu^{b\mathcal{A}}(c_k(x_k) \mid x_{1:k-1}) = \nu^{\mathcal{M}}(\{x_k\} \mid x_{1:k-1})$) and
   \begin{align*}
   \nu^{b\mathcal{A}}(d \tilde x_k |x_{1:k-1})
   &=  \sum_{ c_k\in\mathcal{A}_k}  \nu^{b\mathcal{A}}(c_k \mid x_{1:k-1}) \nu_{k|c_k}(d \tilde{x}_k ) \\
    &=  \nu_{k|c_k(x_k)}(d\tilde x_k) \, \nu^{\mathcal{M}}(d x_k |x_{1:k-1}) .
    \end{align*}
    Finally, since $\nu_{k|c_k(x_k)}(d\tilde x_k) $ is supported on the same cell that $x_k$ belongs to (by definition) and  $\nu^\mathcal{M}(dx_{k+1:K} \mid x_{1:k})$ is constant in $x_{1:k}$ on each fixed cell (which implies $\nu^\mathcal{M}(dx_{k+1:K} \mid x_{1:k}) = \nu^\mathcal{M}(dx_{k+1:K} \mid x_{1:{k-1}}, \tilde{x}_k)$ for each $\tilde{x}_k \in c_k(x_k)$), it follows that
    \begin{align*}
    ({\rm I}) &= R_{k+1}(x_{k+1:K},d \tilde x_{k+1:K})  \, \nu^\mathcal{M}(dx_{k+1:K} \mid x_{1:k}) \, \nu^{b\mathcal{A}}(d x_k |x_{1:k-1}).
    \end{align*}  
    This shows our claim, and thus also completes the proof of the lemma.
\end{proof}

\section{Supplementary facts}
\label{app:supp}
The lemma below is a natural corollary of a version of the Gilbert-Varshamov bound, which we state here for reference:
\begin{lemma}\label{lem:appendixbound}
	Let $A$ and $B$ be two finite sets with $n := |A|$ and $m := |B| \geq 3$. Then, there exists an absolute constant $C > 0$ and a set $\mathcal{F} \subseteq \{f : A \rightarrow B\}$ such that $|\mathcal{F}| \geq \frac{1}{2} m^{C n}$ and for all $f, g \in \mathcal{F}$, we have
	\begin{align}\label{eq:desiredGV}
		\left| \{a \in A \mid f(a) \neq g(a)\}\right| \geq n/8.
	\end{align}
    \begin{proof}
		A version of the Gilbert-Varshamov bound (also called sphere-covering bound, see \cite[Theorem 5.2.4]{ling2004coding}) with minimal distance $z := \lceil n/8 \rceil$ yields the existence of a set $\mathcal{F}$ satisfying \eqref{eq:desiredGV} and
		\[
		|\mathcal{F}| \geq \frac{m^n}{\sum_{j=0}^{z-1} \binom{n}{j}(m-1)^j} =: \frac{1}{M}.
		\]
		Defining $X^1, \dots, X^n$ as i.i.d.~Bernoulli variables with $\mathbb{P}(X^1=1) =p:=\frac{m-1}{m}$, we get
		\begin{align*}
			M&= \sum_{j=0}^{z-1} \binom{n}{j}(m-1)^j (1/m)^n
			=\sum_{j=0}^{z-1} \binom{n}{j}  p^j (1-p)^{n-j} 
			=\mathbb{P}\left( \sum_{i=1}^n X^i \leq z-1\right) .
        \end{align*}
        Since $z-1\leq n/8$, 
        \begin{align*}
        M	 
			&\leq \mathbb{P}\left( \Big|\sum_{i=1}^n X^i - \mathbb{E}[X^i] \Big| \geq np- \frac{n}{8}\right)
            \leq 2\exp\left( \frac{- C (np- \frac{n}{8})^2}{n \, (1/\sqrt{\log(m)})^2} \right),
		\end{align*}
        where the second inequality follows from Hoeffding’s inequality for sub-Gaussian random variables (see, e.g., \cite[Theorem 2.6.2]{vershynin2018high}), observing that the sub-Gaussian norm of each $X^i-p$  is at most $1/\sqrt{\log m}$, and $C$ denotes an absolute constant.

        Finally, since $np- \frac{n}{8}\geq  \frac{n}{4}$, it follows that 
        $M \leq 2\exp( \frac{- Cn^2}{16n \, 1/\log(m)})  = 2m^{-\frac{C}{16}n}$, which completes the proof.
	\end{proof}
\end{lemma}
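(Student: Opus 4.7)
The plan is to invoke the classical Gilbert--Varshamov (sphere-covering) construction on the Hamming space $B^A$ and then to estimate the resulting Hamming ball volume carefully enough to extract a polynomial-in-$m$ cardinality. A greedy argument produces a code $\mathcal{F}\subseteq B^A$ with minimum Hamming distance at least $z := \lceil n/8 \rceil$ and cardinality at least $m^n / V$, where $V = \sum_{j=0}^{z-1}\binom{n}{j}(m-1)^j$ is the size of a Hamming ball of radius $z-1$ in $B^A$. This is just the statement that if one repeatedly removes balls of radius $z-1$ from $B^A$ and no longer can, then the code formed by the ball centres has the desired minimum distance, and at least $m^n/V$ such centres fit.

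The next step is to reinterpret $V/m^n$ probabilistically: it equals $\mathbb{P}(S \leq z - 1)$, where $S = \sum_{i=1}^n X_i$ is a sum of iid Bernoulli$(p)$ random variables with $p = (m-1)/m$. Indeed, a uniformly random function $g\colon A\to B$ agrees with a fixed reference function on a Binomial$(n,1/m)$ number of positions. Since $\mathbb{E}[S] = np \geq 2n/3$ for $m \geq 3$, while $z - 1 < n/8$, we are bounding a large lower deviation of $S$ below its mean, which should be exponentially small not only in $n$ but in $n\log m$.

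I would then apply a sharp concentration estimate --- either a Chernoff bound for the Binomial distribution, or a sub-Gaussian Hoeffding bound that correctly tracks the scale of $X_i$ --- to conclude $\mathbb{P}(S \leq z-1) \leq 2\, m^{-Cn}$ for some absolute constant $C > 0$. Combining this with the Gilbert--Varshamov lower bound then yields $|\mathcal{F}| \geq \tfrac{1}{2}m^{Cn}$ as required.

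The main obstacle is Step 3: obtaining the correct $m$-dependence in the tail bound. A symmetric Hoeffding-type bound would only give decay of the form $e^{-cn}$, which is insufficient, since it would correspond to a code of size $e^{cn}$ rather than $m^{Cn}$. The resolution exploits the fact that $X_i$ has very small variance $p(1-p) = (m-1)/m^2 \sim 1/m$, so that $1-X_i$ is a heavily biased Bernoulli which is sub-Gaussian with norm of order $1/\sqrt{\log m}$; plugging this into Hoeffding's inequality for sub-Gaussian variables yields a bound of the form $\exp(-c n \log m) = m^{-cn}$, and the remaining bookkeeping is routine.
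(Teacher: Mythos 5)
Your proposal is correct and follows essentially the same route as the paper: the Gilbert--Varshamov bound with minimum distance $z=\lceil n/8\rceil$, the reinterpretation of the normalized ball volume as the lower tail $\mathbb{P}(S\leq z-1)$ of a Binomial$(n,(m-1)/m)$ sum, and a sub-Gaussian Hoeffding bound exploiting that the (centered) biased Bernoulli has sub-Gaussian norm of order $1/\sqrt{\log m}$, yielding the decay $m^{-Cn}$. Your remark that a scale-blind Hoeffding bound would only give $e^{-cn}$ correctly identifies the one point where care is needed, and your phrasing in terms of $1-X_i$ is just a cosmetic variant of the paper's argument.
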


\section{Numerical experiments}
\subsection{Numerical lower bound for $\hat\mu^{\mathcal{A}}$ rate}
 \label{app:lower.muA}
\begin{figure}[t]
    \centering
    \includegraphics[width=0.5\linewidth]{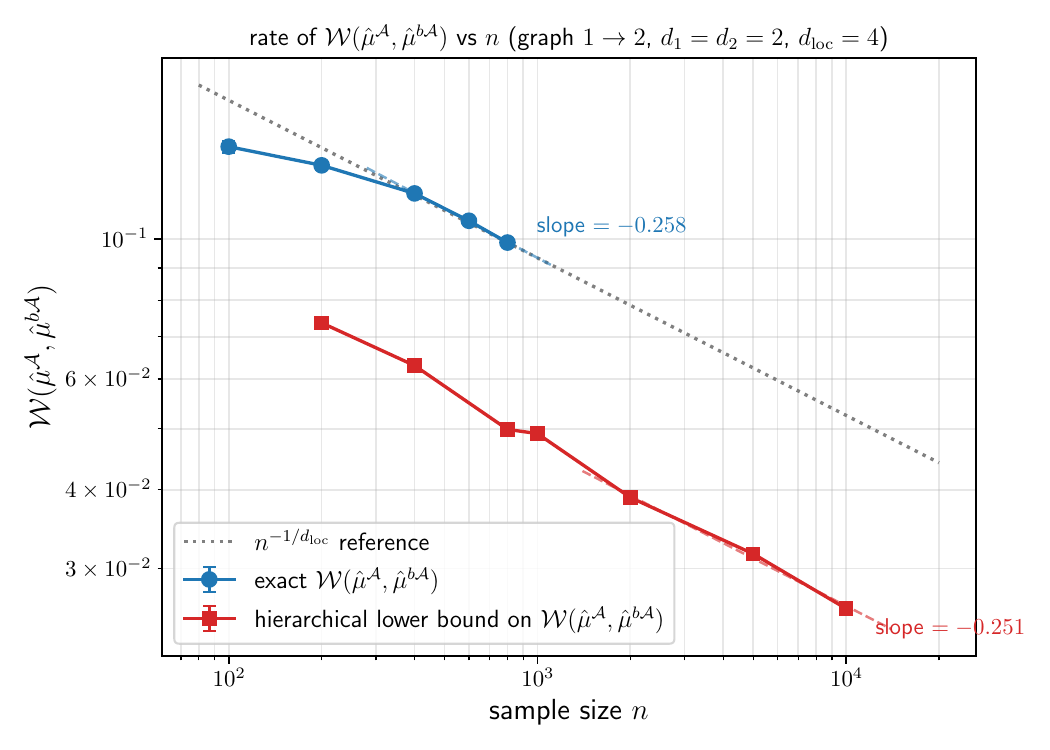}
    \caption{Illustration of a numerical example indicating that the upper rate established for the estimator $\hat\mu^{\mathcal{A}}$ may be sharp.}
    \label{fig:lowerW}
\end{figure}
While a full treatment for lower bounds with Wasserstein-Lipschitz kernels requires new ideas and goes beyond the scope of the current work, we briefly illustrate that it is possible that the rate $n^{-1/d_{\rm loc}}$ of Theorem \ref{thm:estWLip} may indeed be sharp. More precisely, we provide a numerical example indicating that at least the estimator $\hat\mu^{\mathcal{A}}$ seems to yield nothing better than the rate $n^{-1/d_{\rm loc}}$. This also underlines the importance of the new estimator in Section \ref{sec:tvlip}, cf.~Remark \ref{rem:sec23}.

For notational simplicity, the following example is stated on $[-1.25, 1.25]^4$ instead of $[0, 1]^4$, which obviously only matters up to constants in the observed rates.
The example is for the graph with two nodes, $1\rightarrow 2$ and $d_1=d_2=2$ (we ignore log factors in the following). 
Let $X_1 =(X_{1,1},X_{1,2})$ and $\varepsilon=(\varepsilon_1,\varepsilon_2)$ be two independent random vectors uniformly distributed in $[-1,1]^2$, and set
\[X_2 := X_1 + \left(\begin{array}{l}
    \sigma(X_{1 ,1}) \cdot \varepsilon_1\\
    \sigma(X_{1, 2}) \cdot \varepsilon_2
\end{array}\right),\quad\text{where }\sigma(x): = \min\{0.5, 0.25 + \min\{|x-1|, |x+1|\}\}.
\]
Denote by $\mu$ the law of $X=(X_1,X_2)$.
Because $\sigma$ is Lipschitz and bounded below, one can check that the forward and backward kernels are Lipschitz w.r.t.~$\TV$; thus $\mu$ satisfies Assumption \ref{ass:TV.Lip} and we know that $\W(\hat\mu^{b\mathcal{A}}, \mu) \lesssim n^{-2/(2+4)} = n^{-1/3}$ in this case. 

 Therefore, when evaluating $\W(\hat\mu^{b\mathcal{A}}, \hat\mu^{\mathcal{A}})$, the slow rate given by $\W(\hat\mu^{\mathcal{A}}, \mu)$ should dominate the distance. That is, if we observe $\W(\hat\mu^{b\mathcal{A}}, \hat\mu^{\mathcal{A}}) \approx n^{-1/4}$, it indicates that $\W(\hat\mu^{\mathcal{A}}, \mu) \approx n^{-1/4}$ as well. While it is numerically challenging to evaluate the Wasserstein distance precisely (and using regularization, as with Sinkhorn's algorithm, would distort the rate), we showcase both the exact Wasserstein distance up to $n=1000$ and a lower bound based on approximating the dual $\W$ with simpler Lipschitz functions, loosely inspired by the grid structure in \cite{dudley1969speed}.\footnote{More precisely, to lower bound $\W(\mu, \nu)$, we use different partitions of the support and define Lipschitz functions to take values $\pm 1$ on the centers of each cell based on whether $\mu-\nu$ has positive or negative mass on this cell.} The results are shown in Figure \ref{fig:lowerW}. The observed trends therein are indeed compatible with the case that the established rates for $\hat\mu^{\mathcal{A}}$ are sharp.

\subsection{Experiments regarding circumventing Assumption \ref{ass:graph_struc}} \label{app:graphass}
We briefly explore how costly it is to circumvent Assumption \ref{ass:graph_struc} by adding edges to the graph in terms of the average increase in $d_{\rm loc}$. While it is algorithmically hard to find the best way to do so (as adding edges to get rid of one collider may introduce new colliders, etc.), up to a reasonable number of edges (like $K \leq 20$), a straightforward implementation via backward induction  to iteratively add edges (with pruning to avoid exploring redundant graphs) still works well. We outline numerical results in this regard for randomly generated graphs (roughly speaking, directed acyclic Erd\"{o}s--Rényi graphs): For a number of nodes $K$, we first sample each $d_k$ uniformly in $\{1, \ldots, 5\}$, then $p$ uniformly in $[0, 1/4]$ and then we add each edge $i\rightarrow j$ with probability $p$. The resulting initial $d_{\rm loc}$ is denoted by $d_{\rm loc}^{\rm init}$ and the best $d_{\rm loc}$ after adding edges to the graph to make it collider free is denoted by $d_{\rm loc}^*$. The average values over 1000 simulations for each $K \in \{4, 7, 10, 13, 16, 19\}$ are reported in Table \ref{tab:er-results}. We find that the difference is modest, yet becomes significant for a larger number of nodes $K$.

\begin{table}[t]
\centering
\caption{Average cost of circumventing the Assumption \ref{ass:graph_struc} for directed acyclic Erdös--R\'enyi graphs with varying number of nodes $K$ and 1000 simulations per $K$. The time column shows how long (for a single graph) it takes to find the best edges to add using backward induction. For the given graphs, the initial $d_{\rm loc}^{\rm init}$ is significantly lower than the global $d$. On average, the local dimension only modestly increases (to $d_{\rm loc}^*$) when the graph is made compatible with Assumption \ref{ass:graph_struc}, though the difference becomes larger with increasing $K$.}
\begin{tabular}{rlllll}
\toprule
$K$ & time (s) & $d$ & $d_{\rm loc}^{\rm init}$ & $d_{\rm loc}^{*}$ & difference \\
\midrule
 4 & 0.000 & 11.973 &  5.662 &  5.663 & 0.001 \\
 7 & 0.000 & 20.940 &  7.845 &  7.865 & 0.020 \\
10 & 0.001 & 30.030 &  9.901 & 10.030 & 0.129 \\
13 & 0.006 & 39.092 & 11.756 & 12.139 & 0.383 \\
16 & 0.039 & 47.970 & 13.800 & 14.761 & 0.961 \\
19 & 0.274 & 56.793 & 15.468 & 17.446 & 1.978 \\
\bottomrule
\end{tabular}
\label{tab:er-results}
\end{table}

\end{document}